\DeclareMathAlphabet{\mathpzc}{OT1}{pzc}{m}{it}
\newtheorem{theorem}{Theorem}[section]
\newtheorem{proposition}[theorem]{Proposition}
\newtheorem{corollary}[theorem]{Corollary}
\newtheorem{lemma}[theorem]{Lemma}
\theoremstyle{definition}
\newtheorem{definition}[theorem]{Definition}
\theoremstyle{remark}
\newtheorem{remark}[theorem]{Remark}
\newtheorem{remarks}[theorem]{Remarks}
\def\le{\leqslant}
\newcommand{\CA}{{\mathcal A}}
\newcommand{\CF}{{\mathcal F}}
\newcommand{\CG}{{\mathcal G}}
\newcommand{\CI}{{\mathcal I}}
\newcommand{\CJ}{{\mathcal J}}
\newcommand{\CK}{{\mathcal K}}
\newcommand{\CO}{{\mathcal O}}
\newcommand{\CS}{{\mathcal S}}
\newcommand{\CT}{{\mathcal T}}
\newcommand{\CW}{{\mathcal W}}
\newcommand{\CX}{{\mathcal X}}
\newcommand{\CZ}{{\mathcal Z}}
\newcommand{\SA}{{\mathscr A}}
\newcommand{\SB}{{\mathscr B}}
\newcommand{\SC}{{\mathscr C}}
\newcommand{\SM}{{\mathscr M}}
\newcommand{\SN}{{\mathscr N}}
\newcommand{\SQ}{{\mathscr Q}}
\newcommand{\SV}{{\mathscr V}}
\newcommand{\hCW}{{\widehat\CW}}
\newcommand{\hCS}{{\widehat\CS}}
\newcommand{\DC}{{\mathbb C}}
\newcommand{\DR}{{\mathbb R}}
\newcommand{\DZ}{{\mathbb Z}}
\newcommand{\bA}{{\mathbf A}}
\newcommand{\bB}{{\mathbf B}}
\newcommand{\bC}{{\mathbf C}}
\newcommand{\bP}{{\mathbf P}}
\newcommand{\ch}{{\operatorname{char}\, }}
\newcommand{\Hom}{{\operatorname{Hom}}}
\newcommand{\supp}{{\operatorname{supp}}}
\newcommand{\catmod}{{\operatorname{-mod}}}
\newcommand{\im}{{\operatorname{im}\,}}
\newcommand{\ul}{\underline}
\newcommand{\pr}{{\operatorname{pr}}}
\newcommand{\id}{{\operatorname{id}}}
\newcommand{\sur}{\mbox{$\to\!\!\!\!\!\to$}}
\newcommand{\comment}[1]{}
\begin{document}

\pagenumbering{arabic}
\title[]{ Sheaves on the  alcoves I: Projectivity and wall crossing functors} 

\author[]{Peter Fiebig, Martina Lanini}
\begin{abstract} This paper is the first in a series of papers in which we define and study a category  of ``sheaves of $\CZ$-modules on the set of alcoves'' that carries important information on the category of representations of semisimple Lie algebras in positive characteristics. Here, we define this category and study the structure of its projective objects using a new version of wall crossing functors.  
\end{abstract}

\address{Department Mathematik, FAU Erlangen--N\"urnberg, Cauerstra\ss e 11, 91058 Erlangen}
\email{fiebig@math.fau.de}
\address{Universit\`a degli Studi di Roma ``Tor Vergata", Dipartimento di Matematica, Via della Ricerca Scientifica 1, I-00133 Rome, Italy }
\email{lanini@mat.uniroma2.it}
\maketitle
\section{Introduction}

Let $R$ be an irreducible root system in a real vector space $V$, and denote by  $\CA$ the corresponding set of alcoves, i.e. the set of connected components of the complement  in $V$ of the union of affine reflection hyperplanes. The choice of a system of positive roots endows $\CA$ with a natural partial order, known as the {\em generic Bruhat order}. Now we  view $\CA$ as  a topological space with the order ideals as open sets. 

We fix a field $k$ and denote by $S$ the symmetric algebra over $k$ associated with  the coweight lattice.  There is a certain commutative $S$-algebra $\CZ$, depending on the root system, that appears in surprisingly many contexts. For $k=\DC$, it is the deformed categorical center of the principal block of the category $\CO$ of the finite dimensional simple complex Lie algebra associated with $R$. 
It is also closely related to the categorical center of the principal block of restricted critical level category $\CO$ of the corresponding affine Kac--Moody algebra.
For any $k$, it is the torus equivariant cohomology with coefficients in $k$ of the flag variety $G^\vee/B^\vee$, where $G^\vee$ is the semisimple simply connected complex algebraic group for the dual root system, and $B^\vee\subset G^\vee$ is a Borel subgroup. For arbitrary $k$, $\CZ$ is the structure algebra of the Bruhat graph associated with the root system $R$.

In this paper we study {\em sheaves of $\CZ$-modules} on the topological space $\CA$, and we introduce a particular full subcategory $\bC$ of  sheaves that satisfy some natural conditions. Apart from some minor finiteness and reflexivity assumptions, the two important conditions are what we call the {\em support condition} and the {\em local extension condition}. The first concerns the structure of the $\CZ$-modules of sections supported on  locally  closed subsets, and the second the extension structure along root strings in $\CA$. For example, the standard objects in $\bC$ are certain skyscraper sheaves $\SV(A)$ supported on a single alcove $A$.  

The category $\bC$ inherits an exact structure from the surrounding category of sheaves of $\CZ$-modules.  For any alcove we show that there is an up to isomorphism unique projective object $\SB(A)$ in $\bC$ that is indecomposable and admits an epimorphism onto $\SV(A)$ for any alcove $A$.  

This paper is the first of a series of papers that is devoted to the study of the category $\bC$ and, in particular, its objects $\SB(A)$. We show in forthcoming papers that they  carry significant representation theoretic information. For example, the ranks of the stalks of the $\SB(A)$ yield, if the characteristic of $k$ is larger than the Coxeter number, the Jordan--H\"older multiplicities of baby Verma modules for the $k$-Lie algebra associated with $R$, and hence they encode the simple characters of the corresponding semisimple algebraic groups.

\subsection*{Contents}
In Section \ref{sec-alcoves} we recall the definitions of the set of alcoves and of the generic Bruhat order.  Some results concerning the right action of simple affine reflections on $\CA$ are provided. 
The algebra $\CZ$ is defined in Section \ref{sec-strucalg}, where we also study the structure of $\CZ$-modules  and introduce the notion of the $\CZ$-support. 

Section \ref{sec-sheavesparord} states some basic results on (pre-)sheaves on partially ordered sets with values in an abelian category. 
Some sufficient conditions for a presheaf to be a sheaf are given.  
We define subquotients of flabby presheaves, and study  short exact sequences.

In Section \ref{sec-sheavesA} we study sheaves of modules over the structure algebra $\CZ$  on the partially ordered set $\CA$. We discuss two important properties of such sheaves, the {\em support condition} and the {\em local extension condition}, which essentially define the category $\bC$. It contains a subcategory $\bB$ of objects that {\em admit a Verma flag}.  In Section \ref{sec-pfspec} we associate a sheaf to any locally closed subset of $\CA$. As a particular example of this we obtain  the {\em standard objects} $\SV(A)$. 

The category $\bC$ is not abelian, but it inherits an exact structure from its surrounding category of sheaves of $\CZ$-modules on $\CA$.  In Section \ref{sec-projobj} we study projective objects in $\bC$. Proposition \ref{prop-projobj} gives a sufficient condition for an object to be projective. This result is  then used to show that the sheaves associated to special sections are projective. These are the only projectives that we can construct directly, and they serve as the starting points for a wall crossing algorithm that can be used to prove the existence (and some properties) of the other indecomposable projectives. 

In the final Section \ref{sec-wcf} the wall crossing functors $\vartheta_s\colon\bC\to\bC$ for affine simple reflections $s$  are constructed. We describe the wall crossing algorithm, which also implies that the projectives actually admit a Verma flag, i.e. are contained in the subcategory $\bB$.

\subsection*{Acknowledgements}
This material is based upon work supported by the National Science Foundation under Grant No. 0932078 000 while the first author was in residence at the Mathematical Sciences Research Institute in Berkeley, California, during the Fall 2014 semester. The second author would like to thank the University of Edinburgh, that supported her research during the final part of this project.
 Both authors were partially supported by the DFG grant SP1388.

\section{Alcove Geometry}\label{sec-alcoves}

Fix  a finite irreducible root system $R$ in a real finite dimensional vector space  $V$.  For any $\alpha\in R$  denote by $\alpha^\vee\in V^\ast=\Hom_\DR(V,\DR)$ the corresponding coroot. Let $\langle\cdot,\cdot\rangle\colon V\times V^\ast\to \DR$ be the natural pairing. 
Let
\begin{align*}
X&:=\{\lambda\in V\mid \langle \lambda,\alpha^\vee\rangle\in\DZ\text{ for all $\alpha\in R$}\},\\
X^\vee&:=\{v\in V^\ast\mid \langle\alpha,v\rangle\in\DZ\text{ for all $\alpha\in R$}\},
\end{align*}
be the weight and the coweight lattice, resp.
Note that the root lattice $\DZ R$ is contained in $X$. 
Let $R^+\subset R$ be a system of positive roots. The dominant Weyl chamber is 
$$
C^+:=\{\lambda\in V\mid \langle\lambda,\alpha^\vee\rangle>0\text{ for all $\alpha\in R^+$}\},
$$
and the antidominant Weyl chamber is 
$$
C^-:=\{\lambda\in V\mid \langle\lambda,\alpha^\vee\rangle<0\text{ for all $\alpha\in R^+$}\}.
$$

\subsection{Alcoves}\label{subsec-Alc}
For any positive root $\alpha\in R^+$ and $n\in\DZ$ we define 
$$
 H_{\alpha,n}:=\{\mu\in V\mid \langle \mu,\alpha^\vee\rangle = n\},
 $$
 the {\em affine reflection hyperplane} associated with $\alpha$ and $n$,  and
\begin{align*}
H_{\alpha,n}^+&:=\{\mu\in V\mid \langle \mu, \alpha^\vee\rangle>n\},\\
H_{\alpha,n}^-&:=\{\mu\in V\mid \langle \mu,\alpha^\vee\rangle<n\},
\end{align*}
the corresponding positive and the negative half-space, resp. The {\em affine reflection} with fixed point hyperplane $H_{\alpha,n}$ is 
\begin{align*}
s_{\alpha,n}\colon V&\to V\\
\lambda&\mapsto \lambda-(\langle \lambda,\alpha^\vee\rangle-n)\alpha.
\end{align*}
It maps $H_{\alpha,n}^+$ into $H_{\alpha,n}^-$ and vice versa. 
 \begin{definition} \begin{enumerate}
 \item The connected components of $V\setminus\bigcup_{\alpha\in R^{+},n\in\DZ}H_{\alpha,n}$ are called {\em alcoves}. We denote by $\CA$ the set of alcoves. 
 \item The {\em generic Bruhat order} is the partial order $\preceq$ on $\CA$  that is generated by the relations $A\preceq s_{\alpha,n}(A)$ for $A\in\CA$, $\alpha\in R^+$, $n\in\DZ$ with $A\subset H_{\alpha,n}^-$. 
\end{enumerate}
\end{definition}
The partially ordered set $(\CA,\preceq)$ will provide the topological space underlying the sheaf theory studied in Section \ref{sec-sheavesA}.

\subsection{The affine Weyl group}\label{subsec-affWeyl}
 The {\em affine Weyl group} is the group $\hCW$ of affine transformations on $V$ that is  generated by the affine reflections $s_{\alpha,n}$ for $\alpha\in R^+$ and $n\in\DZ$.  It stabilizes the set of affine hyperplanes and hence permutes the set of alcoves $\CA$. $\CA$ is a principal homogeneous set for this action. 
 
 Denote by $A_e$ the unique alcove that is contained in the dominant Weyl chamber $C^+$ and contains $0$ in its closure. It is called the {\em fundamental alcove}.  
 Then the map $\hCW\to\CA$, $w\mapsto A_w:=w(A_e)$ is a bijection. Via this  bijection, the natural right action of $\hCW$ on itself yields a right action of $\hCW$ on $\CA$ (i.e.  $A_xw:=A_{xw}$).
 Denote by $\hCS\subset\hCW$ the set of reflections along hyperplanes that have a codimension 1 intersection  with the  closure of the fundamental alcove $A_e$. Then $(\hCW,\hCS)$ is a Coxeter system.   
  
    We denote by $t_\lambda\colon V\to V$ the affine translation $\mu\mapsto \lambda+\mu$. Then
\begin{align*}
t_\lambda(H_{\alpha,n})=H_{\alpha,n+\langle\lambda,\alpha^\vee\rangle},\quad
t_\lambda(H_{\alpha,n}^\pm)=H_{\alpha,n+\langle\lambda,\alpha^\vee\rangle}^\pm.
\end{align*}
So  $t_\lambda$ induces  a bijection  $\CA\to\CA$ and it preserves the order $\preceq$, i.e. $
A\preceq B$  if and only if $t_\lambda(A)\preceq t_\lambda(B)$ for all $\lambda\in X$.

Easy calculations yield:
\begin{lemma} \label{lemma-easypeasy}
\begin{enumerate}
\item For $\alpha\in R^+$ and $m,n\in\DZ$ we have $s_{\alpha,n}\circ s_{\alpha,m}=t_{(n-m)\alpha}$.
\item For $\alpha\in R^+$,  $n\in\DZ$ and $\lambda\in X$ we have $s_{\alpha,n}\circ t_\lambda= t_{s_{\alpha,0}(\lambda)}\circ s_{\alpha,n}$.
\end{enumerate}
\end{lemma}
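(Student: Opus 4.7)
The plan is to prove both parts by direct substitution into the explicit formula $s_{\alpha,n}(\mu)=\mu-(\langle\mu,\alpha^\vee\rangle-n)\alpha$ recalled just before the statement. The only ingredients needed are linearity of $\langle\cdot,\alpha^\vee\rangle$ in the first slot and the identity $\langle\alpha,\alpha^\vee\rangle=2$. Both assertions are equalities of affine self-maps of $V$, so it is enough to evaluate both sides on an arbitrary $\mu\in V$.

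For part (1) I would first compute $s_{\alpha,m}(\mu)=\mu-c\alpha$ with $c:=\langle\mu,\alpha^\vee\rangle-m$, and then pair the result with $\alpha^\vee$, using $\langle\alpha,\alpha^\vee\rangle=2$ to obtain $\langle\mu-c\alpha,\alpha^\vee\rangle=\langle\mu,\alpha^\vee\rangle-2c$. Substituting this into the formula for $s_{\alpha,n}$ and collecting the coefficient of $\alpha$ makes the $\langle\mu,\alpha^\vee\rangle$-terms cancel, leaving $\mu+(n-m)\alpha=t_{(n-m)\alpha}(\mu)$, as required.

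For part (2) I would evaluate both sides at $\mu$ and compare. The left hand side is $s_{\alpha,n}(\lambda+\mu)=\lambda+\mu-(\langle\lambda+\mu,\alpha^\vee\rangle-n)\alpha$. On the right hand side one has $s_{\alpha,0}(\lambda)+s_{\alpha,n}(\mu)=\bigl(\lambda-\langle\lambda,\alpha^\vee\rangle\alpha\bigr)+\bigl(\mu-(\langle\mu,\alpha^\vee\rangle-n)\alpha\bigr)$, and splitting $\langle\lambda+\mu,\alpha^\vee\rangle=\langle\lambda,\alpha^\vee\rangle+\langle\mu,\alpha^\vee\rangle$ shows that the two expressions coincide. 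There is no genuine obstacle here, as the paper itself indicates; the only things to track are the sign bookkeeping and the consistent use of $\langle\alpha,\alpha^\vee\rangle=2$ in (1). Note that in (2) we do not actually need the integrality of $\langle\lambda,\alpha^\vee\rangle$, so the hypothesis $\lambda\in X$ is only relevant later when $t_{s_{\alpha,0}(\lambda)}$ has to act on the alcove set.
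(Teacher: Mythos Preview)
Your proof is correct and is precisely the direct verification the paper has in mind; the paper omits the argument entirely, writing only ``Easy calculations yield'' before the statement. Your bookkeeping in both parts is accurate, including the cancellation via $\langle\alpha,\alpha^\vee\rangle=2$ in (1).
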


For $\lambda\in\DZ R$, we have $t_\lambda\in\hCW$, as $t_\alpha=s_{\alpha,1}\circ s_{\alpha,0}$ for any $\alpha\in R^+$.


 \subsection{Boxes and special alcoves}
Let $\Delta\subset R^+$ be the set of simple roots.   For $\lambda\in X$ let
$\Pi_\lambda\subset \CA$ be the set of all alcoves $A$ contained in $H_{\alpha,\langle\lambda,\alpha^\vee\rangle-1}^+\cap H_{\alpha,\langle\lambda,\alpha^\vee\rangle}^-$ for any $\alpha\in\Delta$. The $\Pi_\lambda$  are called boxes. Each alcove is contained in a unique box, and each box $\Pi_\lambda$ contains a unique  $\preceq$-maximal alcove $A_\lambda^-$. It is the unique alcove in $\Pi_\lambda$ that contains $\lambda$ in its closure. 
 Following  \cite[\S1.1]{LusAdv} we call an alcove $A$ {\em special} if $A=A_\lambda^-$ for some $\lambda\in X$. We denote by $A_\lambda^+$ the unique alcove that is symmetric to $A_\lambda^-$ with respect to $\lambda$, i.e. it is the unique alcove contained in $C^++\lambda$ that contains $\lambda$ in its closure.

\begin{lemma}\label{lemma-specalc} Let $A\in\CA$ and $\lambda\in X$ with $A\in\Pi_\lambda$. Then we can find  $s_1$, \dots, $s_n\in\hCS$ such that $A=A^-_\lambda s_1\cdots s_n$ and $A\prec A^-_\lambda s_1\cdots s_{n-1}\prec  \cdots \prec A^-_\lambda s_1\prec A^-_\lambda$.
\end{lemma}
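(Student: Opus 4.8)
The plan is to induct on the "distance" of $A$ from the maximal alcove $A_\lambda^-$ inside the box $\Pi_\lambda$, measured by the number of affine reflection hyperplanes separating them. First I would set up the induction: if $A = A_\lambda^-$ there is nothing to prove (take $n=0$). Otherwise, since $A \prec A_\lambda^-$ (as $A_\lambda^-$ is the unique $\preceq$-maximal alcove of the box by the preceding lemma), there exists a hyperplane $H_{\alpha,m}$ separating $A$ from $A_\lambda^-$; among all such separating hyperplanes, I would choose one, say $H$, that is adjacent to $A_\lambda^-$ in the sense that $A_\lambda^-$ has a codimension-one face lying on $H$. Such a choice exists: walking along a straight segment from a point of $A_\lambda^-$ to a point of $A$, the first hyperplane crossed is adjacent to $A_\lambda^-$ and separates the two alcoves. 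Let $B := s_H(A_\lambda^-)$ be the alcove obtained by reflecting $A_\lambda^-$ across $H$; then $B \prec A_\lambda^-$ by the definition of the generic Bruhat order (since $A_\lambda^- \subset H^+$ forces $B = s_H(A_\lambda^-) \prec A_\lambda^-$, after checking the sign convention).

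Next I would identify the reflection $s_H$ with a simple affine reflection acting on the right. The key point is that $B = A_\lambda^- s$ for some $s \in \hCS$: writing $A_\lambda^- = A_w = w(A_e)$ under the bijection $\hCW \to \CA$, the alcoves sharing a codimension-one face with $A_w$ are exactly the $A_{ws}$ for $s \in \hCS$, because $\hCS$ consists precisely of the reflections along the walls of the fundamental alcove $A_e$, and left-multiplication by $w$ carries walls of $A_e$ to walls of $A_w$. So $B = A_\lambda^- s_1$ with $s_1 := s \in \hCS$, and $A \prec B \prec A_\lambda^-$ — the second inequality is what we just established, and the first holds because $H$ was chosen to separate $A$ from $A_\lambda^-$: on one side of $H$ we have both $A$ and $A$'s box-neighbour $B$, while $A_\lambda^-$ is on the other side; a short argument comparing half-space memberships shows $A \preceq B$, and they are distinct.

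Now $A$ lies in the same box $\Pi_\lambda$? Here lies the main obstacle: I must verify that $B = A_\lambda^- s_1$ still belongs to the box $\Pi_\lambda$, so that the inductive hypothesis applies to $B$ in place of $A_\lambda^-$ — or, more honestly, I should run the induction on the pair $(A, A_\lambda^-)$ with the number of separating hyperplanes as the induction parameter, and show that $B$ still lies "between" $A$ and $A_\lambda^-$ with strictly fewer separating hyperplanes from $A$. The delicate issue is that the box is cut out by the specific hyperplanes $H_{\alpha, \langle\lambda,\alpha^\vee\rangle}$ and $H_{\alpha,\langle\lambda,\alpha^\vee\rangle-1}$ for $\alpha \in \Delta$, and a priori reflecting $A_\lambda^-$ across an arbitrary adjacent wall could leave the box. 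However, since $A \in \Pi_\lambda$ and $A$ is separated from $A_\lambda^-$ by $H$, the hyperplane $H$ cannot be one of the box-defining hyperplanes (those have the whole box, hence both $A$ and $A_\lambda^-$, on one side); therefore $H$ is an interior hyperplane of the box, and $B = s_H(A_\lambda^-)$ remains in $\Pi_\lambda$. This is the step I expect to require the most care, and it is exactly where the "easy calculations" of Lemma \ref{lemma-easypeasy} and the description of boxes get used. Once $B \in \Pi_\lambda$ and the number of hyperplanes separating $A$ from $B$ is one less than the number separating $A$ from $A_\lambda^-$, the induction hypothesis yields $s_2, \dots, s_n \in \hCS$ with $A = B s_2 \cdots s_n = A_\lambda^- s_1 s_2 \cdots s_n$ and the chain $A \prec A_\lambda^- s_1\cdots s_{n-1} \prec \cdots \prec A_\lambda^- s_1 \prec A_\lambda^-$, completing the proof.
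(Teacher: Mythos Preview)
Your approach is in the right spirit---a gallery of adjacent alcoves---but the induction as you set it up does not close. You peel a wall off the $A_\lambda^-$ side to reach $B = A_\lambda^- s_1$ and then want to invoke the induction hypothesis on the pair $(A,B)$. But the lemma is about the pair $(A, A_\lambda^-)$, and $B$ is not the maximal alcove of any box. You notice this and propose to induct instead on the number of separating hyperplanes, which forces you to prove a stronger statement, roughly: for any $C\in\Pi_\lambda$ with $A\preceq C$ there is a $\preceq$-decreasing gallery from $C$ to $A$. The inductive step then requires both $B'\prec B$ and $A\preceq B'$ at every stage. Your justification for the first step (``$A_\lambda^-\subset H^+$, after checking the sign convention'') tacitly uses the maximality of $A_\lambda^-$ in $\Pi_\lambda$; once you pass to $B$, which is no longer maximal, that argument is gone, and the hand-wave ``a short argument comparing half-space memberships shows $A\preceq B$'' is exactly the missing idea. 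There is no cheap reason why an arbitrary wall of $B$ separating it from $A$ must have $B$ on the positive side.

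The paper's proof avoids all of this with a single geometric observation. Since $A_\lambda^-$ contains $\lambda$ in its closure and every $\mu\in\Pi_\lambda$ satisfies $\langle\mu,\alpha^\vee\rangle<\langle\lambda,\alpha^\vee\rangle$ for each $\alpha\in\Delta$, one may choose interior points $x\in A_\lambda^-$ (near $\lambda$) and $y\in A$ with $\delta:=y-x\in C^-$. Then $\langle\delta,\alpha^\vee\rangle<0$ for every $\alpha\in R^+$, so each hyperplane the segment crosses is crossed from its positive to its negative half-space, and the sequence of alcoves traversed is automatically $\preceq$-decreasing. That single choice $\delta\in C^-$ replaces your entire induction, the box-membership verification, and the monotonicity check at every step.
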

\begin{proof} We can find a vector $\delta\in C^-$ and interior points $x\in A^-_\lambda$, $y\in A$ such that $y=x+\delta$ and such that the straight line segment from $x$ to $x+\delta$ does not pass through the intersection of at least two hyperplanes. Hence there exists a sequence of alcoves $A=A_0\prec A_1\prec\ldots A_n=A_\lambda^-$ such that $A_{i}$ and $A_{i+1}$  share a wall for all $i=0,\dots, n-1$. \end{proof}
\subsection{Finite Weyl groups and special sections}\label{subsec-specsec}
Again we fix $\lambda\in X$.
Denote by $\CW_\lambda$ the subgroup of $\hCW$ generated by the reflections $s_{\alpha,\langle\lambda,\alpha^\vee\rangle}$ for $\alpha\in R^+$. These are precisely  the reflections that stabilize $\lambda$. Conjugation with $t_{\lambda-\mu}$ yields an isomorphism $\CW_\lambda\cong\CW_\mu$ for all $\mu\in X$. Denote by $\CS_\lambda\subset\CW_\lambda$ the set of reflections that stabilize (pointwise) a wall of $A_\lambda^-$. Then $(\CW_\lambda,\CS_\lambda)$ is a Coxeter system. We have $\hCW=\CW_\lambda\ltimes\DZ R$.

Set $\CK_\lambda:=\CW_\lambda(A_\lambda^-)\subset\CA$. This is the set of alcoves that contain $\lambda$ in their closure. We call these subsets {\em special sections}. 
\begin{lemma}\label{lemma-specset1}
 \begin{enumerate}
 \item 
The map $\tau=\tau_\lambda\colon \CW_\lambda\to \CK_\lambda$, $\tau(x)=x(A_\lambda^-)$ is a bijection that  identifies the Bruhat order $\le_\lambda$ on $\CW_\lambda$ with the restriction of $\preceq$ to $\CK_\lambda$.  
\item The set $\CK_\lambda$  is a full set of representatives for the $\DZ R$-orbits in $\CA$. 
\end{enumerate}
\end{lemma}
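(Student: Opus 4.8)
The plan is to handle the two parts separately, with part (1) being the substantial one.

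For part (1), the first thing I would do is check that the map $\tau$ is well-defined and bijective. Surjectivity is immediate from the definition $\CK_\lambda = \CW_\lambda(A_\lambda^-)$, and injectivity follows because $\CA$ is a principal homogeneous set for $\hCW$ (see \S\ref{subsec-affWeyl}), so the stabilizer of any alcove is trivial; in particular distinct elements of $\CW_\lambda \subset \hCW$ send $A_\lambda^-$ to distinct alcoves. The heart of the matter is the order-theoretic statement. Here I would exploit the fact that conjugation by $t_{\lambda - \mu}$ identifies $\CW_\lambda$ with $\CW_\mu$ compatibly with translation of alcoves (which preserves $\preceq$ by \S\ref{subsec-affWeyl}), so it suffices to treat one convenient choice of $\lambda$, e.g. $\lambda = 0$; then $\CW_0$ is the usual finite Weyl group $\CW$ generated by the $s_{\alpha,0}$, with Coxeter generators $\CS_0 \subset \hCS$, and $A_0^- = A_e$ is the fundamental alcove. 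In this normalization one recognizes $\CK_0$ as the set of alcoves having $0$ in their closure, which are precisely the $w(A_e)$ for $w$ in the finite Weyl group. The key claim is then: for $x,y \in \CW$, one has $x \le y$ in the Bruhat order of $(\CW,\CS_0)$ if and only if $A_x = x(A_e) \preceq y(A_e) = A_y$. I would prove this by relating a covering relation $A \preceq s_{\alpha,n}(A)$ (for $A \subset H_{\alpha,n}^-$) inside $\CK_0$ to multiplication by a reflection of $\CW$: if $A = x(A_e)$ lies in $\CK_0$ then the hyperplanes through $0$ are exactly the ones relevant, so the reflection $s_{\alpha,n}$ taking $A$ to another alcove in $\CK_0$ must fix $0$, hence $n$ is forced and $s_{\alpha,n} x = x'$ with $\ell(x') = \ell(x) \pm 1$; comparing with the standard description of the Bruhat order via such reflection steps (and using that the generic Bruhat order $\preceq$ on $\CA$ restricted to $\CK_0$ is generated by exactly these steps) gives the equivalence. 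This is essentially the statement that the finite Weyl group, realized as alcoves around a special point, carries the Bruhat order as the induced generic Bruhat order; I expect the main obstacle to be bookkeeping the direction of inequalities — checking that $A \subset H_{\alpha,n}^-$ translates into the length-increasing direction in $\CW$ under $\tau$ — and making sure the generic Bruhat order relations among alcoves of $\CK_0$ are not ``short-circuited'' by paths leaving $\CK_0$. For the latter I would argue that a minimal chain realizing $A_x \prec A_y$ can be chosen inside $\CK_0$, using a straight-line/geodesic argument near $0$ analogous to the proof of Lemma \ref{lemma-specalc}, or alternatively cite the standard identification of Bruhat order with the order on alcoves given in Lusztig's work \cite{LusAdv}.

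For part (2), I would use the decomposition $\hCW = \CW_\lambda \ltimes \DZ R$ recorded just above the lemma. Since $\CA$ is a principal homogeneous set for $\hCW$ via $w \mapsto A_w$, and the right (equivalently here, the relevant) action of $\DZ R$ by translations $t_\mu$, $\mu \in \DZ R$, corresponds to the subgroup $\DZ R \subset \hCW$, the $\DZ R$-orbits in $\CA$ are in bijection with the cosets $\hCW / \DZ R \cong \CW_\lambda$ (using the semidirect product structure, $\DZ R$ being normal). Concretely, every alcove $A$ can be written uniquely as $A = t_\mu x (A_\lambda^-)$ with $\mu \in \DZ R$ and $x \in \CW_\lambda$; then $A$ lies in the same $\DZ R$-orbit as $x(A_\lambda^-) \in \CK_\lambda$, and two elements of $\CK_\lambda$ in the same $\DZ R$-orbit would give, via $\tau^{-1}$, two elements of $\CW_\lambda$ differing by an element of $\DZ R \cap \CW_\lambda = \{e\}$, hence are equal. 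I would just need to double-check the direction/normalization of the translation action — that $t_\mu$ for $\mu \in X$ acts on $\CA$ as stated in \S\ref{subsec-affWeyl} and that for $\mu \in \DZ R$ this is the action of the element $t_\mu \in \hCW$ — which is immediate from $t_\alpha = s_{\alpha,1} \circ s_{\alpha,0}$. This part I expect to be routine.
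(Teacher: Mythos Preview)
Your approach is essentially the same as the paper's: reduce to $\lambda=0$ via translations, then match the generating relations of the Bruhat order on $\CW_0$ with the generating relations of $\preceq$ restricted to $\CK_0$; for part (2) use $\hCW=\CW_\lambda\ltimes\DZ R$ together with the fact that $\CA$ is a principal homogeneous $\hCW$-set, exactly as you outline.

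One factual slip to correct: $A_0^-\ne A_e$. By definition $A_0^-$ lies in the box $\Pi_0$, hence in the antidominant chamber $C^-$; it is $A_0^+$ that equals $A_e$. Consequently $\tau(x)=x(A_0^-)$ is not the alcove $A_x=x(A_e)$, and $\CS_0$ consists of the $s_{\alpha,0}$ for $\alpha\in\Delta$ (which does happen to lie in $\hCS$, but for a different reason than you suggest). This does not damage the argument --- you simply run it with the correct basepoint $A_0^-$ --- but the bookkeeping of the direction of inequalities you mention becomes: $x<s_\alpha x$ in $\CW_0$ iff $x^{-1}(\alpha)\in R^+$ iff $\langle v,x^{-1}(\alpha^\vee)\rangle<0$ for $v\in A_0^-$ iff $x(A_0^-)\subset H_{\alpha,0}^-$, which is the generating relation for $\preceq$.

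Your caution about ``short-circuiting'' (chains in $\CA$ leaving $\CK_0$) is well placed; the paper dispatches this in a single sentence by appealing to the definition of the generic order, whereas you propose either a geodesic argument or a citation to \cite{LusAdv}. Either fix is fine and makes the converse direction more transparent than the paper's proof.
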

\begin{proof} We prove statement (1). Clearly $\tau$ is a bijection. Since the translations $t_\lambda$ preserve the order $\preceq$ and yield isomorphisms of the finite Coxeter systems,  we can assume $\lambda=0$. The Bruhat order $\le$ on $\CW_0$ is generated by $x< s_{\alpha}x$ if $x^{-1}(\alpha)\in R^+$. This is the case if and only if $\langle x(v),\alpha^\vee\rangle=\langle v,x^{-1}(\alpha^\vee)\rangle<0$ for all $v\in A_0^-$, i.e. $x(A_0^-)\in H_{\alpha,0}^-$, which is the case if and only if $s_{\alpha}x(A_0^-)\in H_{\alpha,0}^-$. Now the statements follows from the definition of the generic Bruhat order.

Statement (2) follows from the facts that $\hCW=\CW_\lambda\ltimes\DZ R$ and that $\CA$ is a principal homogeneous $\hCW$-set. 
\end{proof}

\subsection{Length functions on $\CA$}\label{subsec-lengthfunc}

For any  $\lambda\in X$ let $\ell_\lambda:\CA\rightarrow \DZ$ be the length function relative to $\lambda$ (cf.~\cite{LusAdv}). For an alcove $A$, $\ell_\lambda(A)$ is obtained as follows. Fix a path $\gamma$ in the real vector space $V$ from the alcove $A_\lambda^+$ to $A$ (nothing will depend on the choice of this path). Then $\ell_\lambda(A)=-(m_+ - m_-)$, where $m_+$ is the number of hyperplanes that $\gamma$ crosses from the negative to the positive side, and $m_-$ the number of hyperplanes that $\gamma$ crosses from positive to the negative side.  Note that here we use a convention that is opposite to the one used in \cite{LusAdv} or \cite{SoeRep}.

Clearly,   if $s\in\hCS$ is an affine simple reflection such that $A\prec As$, then $\ell_\lambda(As)=\ell_\lambda(A)-1$.

\subsection{The right action of $\hCS$}
For us, the right action of the elements of $\hCS$ on $\CA$ is of particular importance. Fix $s\in\hCS$. Note that for $A\in\CA$, the elements $A$ and $As$ are always $\preceq$-comparable. An element $A\in\CA$ is called {\em $s$-dominant} if $As\preceq A$, and it is called {\em $s$-antidominant} if it is not $s$-dominant.
\begin{lemma}\label{lemma-propgenord1} Let $A\in\CA$ be $s$-dominant.
\begin{enumerate}
\item The set $\{As,A\}$ is an interval, i.e. for any $B\in\CA$ with $As\preceq B\preceq A$ we have $B=As$ or $B=A$.
\item For $B\in\CA$  we have:
\begin{enumerate} 
\item If $B\preceq A$, then $Bs\preceq A$.
\item If $As\preceq B$, then $As\preceq Bs$.
\end{enumerate}
\end{enumerate}
\end{lemma}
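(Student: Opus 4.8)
The plan is to work with the geometry of the affine reflection hyperplane $H$ associated to $s$. Since $s\in\hCS$ is a reflection along some fixed hyperplane $H_s$ of the fundamental alcove, for each $A\in\CA$ the reflection realizing the right multiplication by $s$ is the affine reflection $s_{\alpha,n}$ in a hyperplane $H_A$ obtained from $H_s$ by the element of $\hCW$ carrying $A_e$ to $A$; the alcoves $A$ and $As$ are the two alcoves adjacent along $H_A$, and $A$ being $s$-dominant means $As\subset H_A^-$, $A\subset H_A^+$ (with the appropriate sign convention making $A\preceq$-larger on the positive side). The two key facts I would isolate first are: (i) as $B$ ranges over $\CA$, the hyperplane $H_B$ ranges over all hyperplanes in the $\hCW$-orbit of $H_s$, so in particular distinct $B$ on the same side of a given $H_B$ can have different $H$'s — but crucially, for a \emph{fixed} $A$, the description of $Bs$ for all $B$ is governed by a single family; and (ii) the move $B\mapsto Bs$ either fixes the relation with $H_A$ or flips $B$ across $H_B$, and comparability of $B$ with $A$ constrains which side of which hyperplane $B$ lies on.

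For part (1), I would argue as follows. Suppose $As\preceq B\preceq A$ with $B\neq As$. Because $A$ is $s$-dominant, $As$ and $A$ are adjacent across $H_A$ with $A$ on the positive side. I want to show $B=A$. The generic Bruhat order $\preceq$ is generated by single hyperplane crossings $C\prec s_{\beta,m}(C)$ when $C\subset H_{\beta,m}^-$, so any saturated chain from $As$ to $A$ is a sequence of such elementary up-moves. Consider the hyperplane $H_A$: the alcove $As$ lies in $H_A^-$ and $A$ lies in $H_A^+$, and they are adjacent, so $H_A$ separates them and no other hyperplane does. Along any chain from $As$ to $B$ to $A$, the net effect must cross $H_A$ from $-$ to $+$ exactly once (a parity/separation argument using that $H_A$ separates the endpoints). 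If $B\neq As$ then $B$ is obtained from $As$ by at least one up-move; I would show that the first up-move already crosses $H_A$ (otherwise $B$ and $As$ lie on the same side of $H_A$, hence $B\subset H_A^-$, and then to reach $A$ one still needs a crossing of $H_A$ plus the other crossing was superfluous, forcing a shorter chain, contradicting saturation or using the interval being of length one combinatorially). Cleanly: the interval $[As,A]$ in $\preceq$ has a chain of length one, and in the generic Bruhat order (which is locally finite and graded by a length-type function, e.g. $\ell_\lambda$ from Section~\ref{subsec-lengthfunc}), an interval with a maximal chain of length one contains only its endpoints. So $B\in\{As,A\}$.

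For part (2), I would use the reflection $r:=s_{\alpha,n}$ with $H_A=H_{\alpha,n}$, so that $As=r(A)$, $A\subset H_{\alpha,n}^+$, $r(A)\subset H_{\alpha,n}^-$, and recall (Lemma~\ref{lemma-easypeasy}) how $r$ interacts with translations. For (2)(a): assume $B\preceq A$. If $B$ is $s$-dominant, then $Bs\preceq B\preceq A$ and we are done. If $B$ is $s$-antidominant, then $B\prec Bs$, and $Bs$ is obtained from $B$ by crossing the hyperplane $H_B$ from its negative to its positive side; I must show this crossing does not push $B$ past $A$. The point is that $B\preceq A$ together with the fact that the only elementary moves available from $B$ that the order permits are crossings of hyperplanes $H_{\beta,m}$ with $B\subset H_{\beta,m}^-$, and the hyperplane $H_B$ for the $s$-move is one specific such hyperplane; since $A$ is $s$-dominant, $A$ already lies on the positive side of the analogous hyperplane $H_A$, and a translation/$\hCW$-argument shows $H_B$ and $H_A$ are "parallel copies" with $A$ on the far positive side, so crossing $H_B$ upward keeps us $\preceq A$. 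Concretely I would transport by the unique $t\in\DZ R$ (or element of $\hCW$) and reduce to comparing $B$, $Bs$, $A$, $As$ all within a controlled configuration, then invoke part (1) or a direct computation with $\langle\cdot,\alpha^\vee\rangle$. For (2)(b): assume $As\preceq B$. If $B$ is $s$-antidominant then $B\prec Bs$ so $As\preceq B\prec Bs$, done. If $B$ is $s$-dominant, $Bs\prec B$; I need $As\preceq Bs$, i.e. the downward $s$-move on $B$ does not drop below $As$ — this is dual to (2)(a) and I would either reflect the whole argument (apply $r$ everywhere) or deduce it formally: applying the order-reversing symmetry is not available, so instead I would argue that $Bs\prec B$ crosses $H_B$ downward, and $As$ lies on the negative side of $H_A$ maximally, so we stay $\succeq As$.

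I expect the main obstacle to be part (2): making rigorous the claim that the single $s$-reflection hyperplane $H_B$ relevant to $B$ is "parallel" to $H_A$ with $A$ (resp.\ $As$) on the extreme positive (resp.\ negative) side, so that crossing $H_B$ cannot jump over $A$ (resp.\ under $As$). The clean way is: both $H_A$ and $H_B$ are $\hCW$-translates of the fixed hyperplane $H_s$ of $s$, hence of the form $H_{\beta,\bullet}$ for the \emph{same} $\beta\in R$ up to sign (because right multiplication by a fixed $s$ always reflects in hyperplanes of a fixed "direction" within each $\DZ R$-coset — this needs the decomposition $\hCW=\CW_\lambda\ltimes\DZ R$ and Lemma~\ref{lemma-specset1}), so $\langle\cdot,\beta^\vee\rangle$ is a single linear functional measuring the relevant crossings, and the inequalities reduce to statements about integer values of this functional. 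Once that reduction is set up, (2)(a) and (2)(b) become short computations; identifying and justifying that reduction is the real work, and part (1) is then either a consequence or an easy gradedness argument using $\ell_\lambda$.
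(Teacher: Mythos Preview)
The paper does not prove this lemma at all: it simply cites \cite{LusAdv}, specifically (1.4.2) for part (1), Proposition~3.2 for (2a), and Corollary~3.3 for (2b). So there is no argument in the paper to compare against; any self-contained proof you produce is automatically a different route.

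That said, your proposed route for part~(2) contains a genuine error. You claim that the walls $H_A$ and $H_B$ across which right multiplication by $s$ reflects $A$ and $B$ are ``parallel copies'', i.e.\ of the form $H_{\beta,\bullet}$ for the \emph{same} $\beta\in R^+$, and then propose to reduce everything to inequalities for the single linear functional $\langle\cdot,\beta^\vee\rangle$. This is false. If $A=A_w$ then $H_A=w(H_s)$, and as $w$ ranges over $\hCW$ the direction of $w(H_s)$ runs through the entire $\CW_0$-orbit of the root labelling $H_s$. Concretely, in type $A_2$ take $s=s_1$ (the reflection in $H_{\alpha_1,0}$), $A=A_e$, and $B=A_{s_2}$. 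Then $B\preceq A$, $A$ is $s$-dominant, $H_A=H_{\alpha_1,0}$, but $H_B=s_2(H_{\alpha_1,0})=H_{\alpha_1+\alpha_2,0}$, which is not parallel to $H_A$. Your parenthetical remark that the direction is constant ``within each $\DZ R$-coset'' is correct but irrelevant, since $B\preceq A$ does not force $A$ and $B$ into the same $\DZ R$-orbit. Without the parallelism, the whole reduction to a single linear functional collapses, and with it the proposed proofs of (2a) and (2b). Lusztig's actual arguments in \cite{LusAdv} are more delicate and do not rely on any such parallelism.

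Your approach to part~(1) via the length function $\ell_\lambda$ is more promising, but as written it is incomplete: you invoke that $\preceq$ is graded by $-\ell_\lambda$, yet the only fact recorded in this paper is that $\ell_\lambda$ changes by $1$ under the adjacent move $A\mapsto As$. That $\ell_\lambda$ is strictly monotone along \emph{arbitrary} generating relations $C\prec s_{\alpha,n}(C)$ (where $C$ and $s_{\alpha,n}(C)$ need not be adjacent) is true, but it requires its own argument --- essentially the observation that the hyperplanes separating $C$ from $s_{\alpha,n}(C)$ other than $H_{\alpha,n}$ come in $s_{\alpha,n}$-paired contributions that cancel in the signed count. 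If you supply that, your argument for (1) goes through.
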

\begin{proof}(1) follows immediately from \cite[(1.4.2)]{LusAdv} and the definition of generic order therein. (2a) is \cite[Proposition 3.2]{LusAdv}, and (2b) is  \cite[Corollary 3.3]{LusAdv}.
\end{proof}

A subset $\CT$ of $\CA$ is called {\em $s$-invariant} if $\CT s\subset\CT$. 
For an arbitrary  subset $\CT$ of $\CA$  define
$$
\CT^\flat:=\CT\cap\CT s, \quad \CT^\sharp:=\CT\cup\CT s.
$$
Then $\CT^\flat$ is the largest $s$-invariant subset of $\CT$, and $\CT^\sharp$ is the smallest $s$-invariant subset of $\CA$ containing $\CT$. 

\begin{definition} We say that a subset $\CJ$ of $\CA$ is {\em open} if  $A\in\CJ$, $B\in\CA$ and $B\preceq A$ imply $B\in\CJ$. 
\end{definition}
This notion is studied in more detail in Section \ref{sec-sheavesparord} in the context of general partially ordered sets. 

\begin{lemma}\label{lemma-propgenord2} Suppose that $\CJ$ is open  in $\CA$.
Then $\CJ^\flat$ and $\CJ^\sharp$ are also open in  $\CA$. 

\end{lemma}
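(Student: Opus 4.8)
The plan is to reduce the openness of $\CJ^\flat$ and $\CJ^\sharp$ to the statement that two natural maps $\CA\to\CA$ preserve the order $\preceq$. For $A\in\CA$ write $A_-$ for the $\preceq$-smaller and $A_+$ for the $\preceq$-larger of the two (always $\preceq$-comparable, and distinct) alcoves $A$ and $As$; then $A_-$ is $s$-antidominant, $A_+$ is $s$-dominant, $\{A_-,A_+\}=\{A,As\}$, and $A_-\preceq A\preceq A_+$. I would first record two immediate consequences of the openness of $\CJ$. Since $A_-\preceq A$ and $A_-\preceq As$, the set $\CJ$ being downward closed gives: $A\in\CJ^\sharp=\CJ\cup\CJ s$ if and only if $A_-\in\CJ$. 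Dually, since $A\preceq A_+$ and $As\preceq A_+$, we get: $A\in\CJ^\flat=\CJ\cap\CJ s$ if and only if $A_+\in\CJ$. Thus $\CJ^\sharp$ and $\CJ^\flat$ are the preimages of $\CJ$ under $A\mapsto A_-$ and $A\mapsto A_+$ respectively. Since $f(B)\preceq f(A)$ whenever $B\preceq A$ forces the preimage under $f$ of any open (i.e.\ $\preceq$-downward closed) set to be open, it suffices to prove that $A\mapsto A_-$ and $A\mapsto A_+$ preserve $\preceq$.

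For $A\mapsto A_+$ this is immediate from Lemma \ref{lemma-propgenord1}: if $B\preceq A$, then $B\preceq A\preceq A_+$, and as $A_+$ is $s$-dominant, part (2a) of the lemma yields $Bs\preceq A_+$; hence $B_+=\max\{B,Bs\}\preceq A_+$.

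For $A\mapsto A_-$ the key tool is the auxiliary fact: if $C\in\CA$ is $s$-antidominant, then $C\preceq Y$ implies $C\preceq Ys$ for every $Y\in\CA$. To see this, observe that $C\preceq Cs$ (because $C$ is $s$-antidominant), so $Cs$ is $s$-dominant; applying Lemma \ref{lemma-propgenord1}(2b) with $Cs$ in the role of the lemma's $s$-dominant alcove, and using $(Cs)s=C$, gives exactly this implication. Granting it, let $B\preceq A$: the alcove $B_-$ is $s$-antidominant and satisfies $B_-\preceq B\preceq A$, so the auxiliary fact gives $B_-\preceq As$; together with $B_-\preceq A$ this yields $B_-\preceq\min\{A,As\}=A_-$, as desired.

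I expect the only (minor) obstacle to be the auxiliary fact, and specifically the point that Lemma \ref{lemma-propgenord1}(2b) must be applied to the reflected alcove $Cs$ rather than to $C$ itself; after that the argument is just unwinding the definitions of $s$-(anti)dominance together with the downward-closedness of $\CJ$. Note that part (1) of Lemma \ref{lemma-propgenord1} is not needed for this approach.
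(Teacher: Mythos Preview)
Your proof is correct and uses the same key ingredients as the paper's—namely parts (2a) and (2b) of Lemma~\ref{lemma-propgenord1}, each applied to the $s$-dominant member of the relevant pair $\{A,As\}$ or $\{B,Bs\}$. Your reformulation via the order-preserving maps $A\mapsto A_\pm$ and their preimages is a slightly cleaner packaging of exactly the case analysis the paper carries out directly.
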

\begin{proof}
 Suppose that  $A\in\CJ^\sharp$, $B\preceq A$ and assume that $B\prec Bs$. We show that $B,Bs\in\CJ^\sharp$.  If $A\in\CJ$, then $B\in\CJ$ as $\CJ$ is open, and hence $Bs\in\CJ s$. If $A\not\in\CJ$, then $A\in\CJ s$, ao $As\in\CJ$  and $As\prec A$. Lemma \ref{lemma-propgenord1} implies $B\preceq As$. But then $B\in\CJ$  and hence $Bs\in\CJ s$. So $\CJ^\sharp$ is open. 

Suppose that $A\in\CJ^{\flat}$ and $B\preceq A$. We want to show that $B\in\CJ^{\flat}$. We have $B\in\CJ$ as $\CJ$ is open, so  we need to show that $Bs\in\CJ$. Interchanging the roles of $A$ and $As$ if necessary we can assume that $As\prec A$. Then Lemma \ref{lemma-propgenord1}   implies that $Bs\preceq A$, hence $Bs\in\CJ$. So $\CJ^\flat$ is open.\end{proof}

Consider the  left action of the group $\DZ R$ on $\CA$. It preserves the partial order and commutes with the right action of $s$, so  it preserves the sets of $s$-dominant and $s$-antidominant elements. Denote by $\CA^{+}$ the subset of $s$-dominant elements in $\CA$, and by $\CA^{-}$ the complement of $s$-antidominant elements.  

Note that  $\CJ^\sharp$, $\CJ^\flat$ and $\CA^{\pm}$ depend on the choice of the simple reflection $s$. We do not incorporate this in the notation, as these symbols are always used   in the context of a fixed choice of $s$.

\begin{lemma} Suppose that $\CJ$ is open. Then  $\CJ^\sharp\setminus\CJ\subset\CA^{+}$ and $\CJ\setminus\CJ^\flat\subset\CA^{-}$. 
\end{lemma}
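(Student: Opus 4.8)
The plan is to unwind the definitions of $\CJ^\sharp$ and $\CJ^\flat$ and reduce each inclusion to an immediate application of Lemma \ref{lemma-propgenord1}. For the first inclusion, suppose $A\in\CJ^\sharp\setminus\CJ$. Since $\CJ^\sharp=\CJ\cup\CJ s$ and $A\notin\CJ$, we must have $A\in\CJ s$, i.e. $As\in\CJ$. I want to show $A$ is $s$-dominant, i.e. $As\preceq A$. Suppose not; then $A$ is $s$-antidominant, so $A\prec As$, and $As$ is $s$-dominant. Now $As\in\CJ$ and $A\preceq As$ together with openness of $\CJ$ would force... wait, that gives nothing since openness goes downward. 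Instead: $A\prec As$ means $A = (As)s$ lies below the $s$-dominant alcove $As\in\CJ$; since $\CJ$ is open and $A\preceq As$, we get $A\in\CJ$, contradicting $A\notin\CJ$. Hence $As\preceq A$, so $A\in\CA^{+}$.

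For the second inclusion, suppose $A\in\CJ\setminus\CJ^\flat$. Since $\CJ^\flat=\CJ\cap\CJ s$ and $A\in\CJ$ but $A\notin\CJ^\flat$, we get $A\notin\CJ s$, i.e. $As\notin\CJ$. I claim $A$ is $s$-antidominant, i.e. $A\prec As$ and hence $A\in\CA^{-}$ (recall $\CA^{-}$ is the complement of the $s$-antidominant elements — so actually I need to re-read: $\CA^{-}$ is defined as the complement of the $s$-antidominant elements, meaning $\CA^{-}$ consists of the $s$-dominant ones. Let me re-examine what is to be proved: we want $\CJ\setminus\CJ^\flat\subset\CA^{-}$, so we want every such $A$ to be $s$-dominant). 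So suppose for contradiction that $A$ is $s$-antidominant, i.e. $A\prec As$. Then by Lemma \ref{lemma-propgenord1}, applied to the $s$-dominant alcove $As$ (with $B=A$ in the role of an alcove below it via... ), hmm — I would rather argue directly: if $A$ is $s$-antidominant then $A\preceq As$, but also $As$ is $s$-dominant so $(As)s = A \preceq As$, and from $A\in\CJ$ I cannot immediately conclude $As\in\CJ$. So instead use part (2b) of Lemma \ref{lemma-propgenord1}: with $As$ the $s$-dominant alcove, from $A = (As)s \preceq \cdot$... Actually the cleanest route: $A\in\CA^{+}$ (i.e. $As\preceq A$) would give $As\in\CJ$ by... no. The right statement is that if $A$ is $s$-antidominant, I want to derive $As\in\CJ$. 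Since $A$ is $s$-antidominant, $As$ is $s$-dominant and $A\prec As$; but then there is nothing forcing $As\in\CJ$ from openness. The resolution is that the contrapositive is what matters: $\CA^{-}$ is exactly the set of $s$-dominant alcoves, and we must show $A$ is $s$-dominant, equivalently $A\notin\CA^{+c}$... I will simply argue: if $A$ were $s$-antidominant, pick the $s$-dominant alcove $A' := As$; then $A'\prec$ is not right either.

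Let me restate the clean plan: I would prove both inclusions by contraposition using that $\CJ$ is open together with Lemma \ref{lemma-propgenord1}(1) (the interval property). For $\CJ^\sharp\setminus\CJ$: if $A\in\CJ^\sharp\setminus\CJ$ then $As\in\CJ$; if also $A\notin\CA^{+}$ then $A$ is $s$-antidominant so $As$ is $s$-dominant with $A\prec As$, whence $A\preceq As$ and $As\in\CJ$ force $A\in\CJ$ by openness — contradiction. For $\CJ\setminus\CJ^\flat$: if $A\in\CJ\setminus\CJ^\flat$ then $A\in\CJ$ and $As\notin\CJ$; if $A$ were $s$-antidominant, then $As$ is $s$-dominant and $A = (As)s$, and since $A\in\CJ$, openness of $\CJ$ applied along with the relation $A\preceq As$ gives nothing, so instead I invoke: $As$ is $s$-dominant and $A\preceq As$ is the wrong direction, so I use the symmetric form — if $A$ is $s$-antidominant then $As$ is $s$-dominant and, since $A = (As)s \preceq As$ and trivially $As\preceq As$, Lemma \ref{lemma-propgenord1}(1) with the $s$-dominant alcove $As$ gives that $\{A, As\}$ is an interval; this still doesn't force $As\in\CJ$. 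The genuine content is simply: $\CA\setminus\CA^{-}$ is, by definition, the set of $s$-antidominant alcoves, and for those $A$ we have $As$ is $s$-dominant with $As\succ A$; I then claim $A\notin\CJ\setminus\CJ^\flat$ because... I would, in writing this up, track the definition of $\CA^{-}$ carefully (it is the complement of the $s$-antidominant set, i.e. the $s$-dominant set) and then the statement "$\CJ\setminus\CJ^\flat\subset\CA^{-}$" reads "every $A\in\CJ$ with $As\notin\CJ$ is $s$-dominant", whose contrapositive is "if $A$ is $s$-antidominant and $A\in\CJ$ then $As\in\CJ$" — and \emph{this} follows because $A$ $s$-antidominant means $As$ $s$-dominant with $A\prec As$, so $A\preceq As$... still the wrong direction. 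The correct implication must instead come from: $A$ $s$-antidominant $\Rightarrow$ $As$ is obtained from $A$ by crossing a wall \emph{upward}, and an open set need not contain things above its elements. So the \textbf{main obstacle} I anticipate is getting the variance of $\CA^{-}$ and the openness direction to line up; the likely fix, which I would pursue, is that $\CA^{-}$ is in fact the set on which $As\preceq A$ fails to fail in a useful way — concretely, I expect the intended reading makes the second inclusion the contrapositive of: "if $A$ is $s$-antidominant then, since $As$ is $s$-dominant and $As \succ A$, an element just below the $s$-dominant $As$, namely $A$, forces nothing, but an element $A\in\CJ$ with $A s$ also needed" — so I would recheck the sign conventions against Lemma \ref{lemma-propgenord1}(2a), which says $B\preceq A$ ($A$ $s$-dominant) implies $Bs\preceq A$: taking $A$ here to be the $s$-dominant alcove and $B$ an arbitrary element of the open $\CJ$ below it gives $Bs\in\CJ$, which is exactly the engine of Lemma \ref{lemma-propgenord2}, and the present lemma is then just the bookkeeping extracting that elements newly added or removed by the $\sharp$/$\flat$ operations sit on the expected side of the $s$-wall. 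I would write the two short contradiction arguments as above, citing Lemma \ref{lemma-propgenord1}(1) for the first and (2a) for the second, and I expect each to be three or four lines once the $\CA^{\pm}$ conventions are pinned down.
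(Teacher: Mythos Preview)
Your first inclusion is fine and matches the paper exactly: $A\in\CJ^\sharp\setminus\CJ$ gives $As\in\CJ$, and if $A\prec As$ then openness of $\CJ$ forces $A\in\CJ$, a contradiction; hence $As\preceq A$ and $A\in\CA^{+}$.

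Your trouble with the second inclusion is entirely a matter of conventions, not mathematics. You read the paper's definition of $\CA^{-}$ as ``the $s$-dominant alcoves'', and then (correctly) discover that you cannot prove $\CJ\setminus\CJ^\flat$ consists of $s$-dominant alcoves --- because it does not. The paper's phrasing of the definition is unfortunate; from the proof it gives and from how $\CA^{-}$ is used later (e.g.\ in the proof of Proposition \ref{prop-uniquetrans}, where $\pi(\CA^{+})$ and $\pi(\CA^{-})$ are treated as disjoint), it is clear that $\CA^{-}$ is intended to be the set of $s$-\emph{antidominant} alcoves, i.e.\ the complement of $\CA^{+}$. With that reading the argument is the mirror image of the first one: if $A\in\CJ\setminus\CJ^\flat$ then $A\in\CJ$ and $As\notin\CJ$; were $A$ $s$-dominant we would have $As\preceq A$, and openness of $\CJ$ would give $As\in\CJ$, a contradiction. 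Hence $A\prec As$ and $A\in\CA^{-}$.

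Note also that the paper's proof uses nothing beyond openness of $\CJ$ and the fact that $A$ and $As$ are always comparable. Lemma \ref{lemma-propgenord1} is not invoked at all, so your plan to rely on its parts (1) or (2a) is unnecessary overhead.
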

\begin{proof} Let $A\in\CJ^\sharp\setminus\CJ$. Then $A\in\CJ s$, so $As\in\CJ$ and $A\preceq As$ would imply $A\in\CJ$, a contradiction. Hence $A\in\CA^{+}$. Let $A\in\CJ\setminus \CJ^{\flat}$. Then $As\not\in\CJ$, hence $As\not\preceq A$, so $A\preceq As$ and hence $A\in\CA^{-}$. 
\end{proof}

\subsection{$\alpha$-strings}
Let $\alpha\in R^{+}$ and $A\in\CA$. Then there is a unique $n\in\DZ$ such that 
$
A\subset H_{\alpha,n}^-\cap H_{\alpha,n-1}^+.
$
Define 
$$
\alpha\uparrow A:=s_{\alpha,n}(A).
$$
Then $A\prec\alpha\uparrow A$, and $\alpha\uparrow\cdot\colon \CA\to\CA$ is a bijection. Denote by $\alpha\uparrow^n$ the $n$-fold composition for $n\in\DZ$.


\begin{definition} An {\em $\alpha$-string} in $\CA$ is a subset of $\CA$ of the form $\{\alpha\uparrow^n A\mid n\in\DZ\}$.
\end{definition}
\begin{remarks}
\begin{enumerate}
\item Each $\alpha$-string is a totally ordered subset of $\CA$.
\item The generic Bruhat order $\preceq$ is also generated by the relations $A\preceq \alpha\uparrow A$ for all $A\in\CA$ and $\alpha\in R^+$.
\end{enumerate}
\end{remarks}

Note that for any $\alpha$-string $\Lambda$, the set $\Lambda s$ is an $\alpha$-string as well. 

\begin{lemma} \label{lemma-alphastr} Let $\Lambda$ be an $\alpha$-string and suppose that $\Lambda\cup\Lambda s$ is not a totally ordered subset of $\CA$. Then either $\Lambda$ contains only $s$-antidominant or only $s$-dominant elements.
\end{lemma}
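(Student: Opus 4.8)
The strategy is to reformulate the hypothesis geometrically and then argue by contradiction. Recall that $s\in\hCS$ is reflection along a wall $H$ of the fundamental alcove, so the right action $A\mapsto As$ corresponds, after translating back to $A_e$, to reflecting across one of the walls of $A$ — more precisely, for each $A$ there is a unique hyperplane $H_A$ (a wall of $A$) with $As=s_{H_A}(A)$. The $\alpha$-string $\Lambda=\{\alpha\uparrow^n A_0\mid n\in\DZ\}$ is linearly ordered and its alcoves are stacked along the $\alpha$-direction, consecutive ones sharing a wall lying on some $H_{\alpha,n}$. The first thing I would do is observe that $\Lambda\cup\Lambda s$ fails to be totally ordered exactly when the walls $H_A$, $A\in\Lambda$, are \emph{not} all parallel to $\alpha$ (equivalently, not all of the form $H_{\alpha,n}$): if every $H_A$ were of the form $H_{\beta,m}$ with $\beta=\alpha$, then $\Lambda s$ would be the same $\alpha$-string as $\Lambda$ (just shifted), and the union would be totally ordered by the first Remark on $\alpha$-strings.

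So assume $\Lambda\cup\Lambda s$ is not totally ordered. The key structural input is that, across a single $\alpha$-string, the ``type'' of the wall used by $s$ is essentially constant: since the alcoves of $\Lambda$ are obtained from one another by the translations $t_{k\alpha}$ composed with the reflections $s_{\alpha,n}$ (Lemma~\ref{lemma-easypeasy}), and since $t_{k\alpha}$ and $s_{\alpha,n}$ both conjugate $s$ into a reflection along a hyperplane parallel to the original one (here one uses that $t_{k\alpha}$ sends $H_{\gamma,m}$ to $H_{\gamma,m+k\langle\alpha,\gamma^\vee\rangle}$, a parallel hyperplane, and $s_{\alpha,0}$ permutes the root directions in a controlled way), the family of hyperplanes $\{H_A\mid A\in\Lambda\}$ consists of hyperplanes all parallel to one fixed direction $\gamma\ne\alpha$. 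Concretely: $H_{\alpha\uparrow^n A}$ is obtained from $H_A$ by applying $s_{\alpha,*}$ an appropriate number of times, and $s_{\alpha,0}$ fixes the $\alpha$-direction and permutes the others; iterating, all the $H_A$ lie in the $\DZ$-family of parallel hyperplanes $H_{\gamma,*}$ for a single positive root $\gamma$ with $\langle\gamma,\alpha^\vee\rangle\ne 0$ (the inequality is what makes $\gamma\ne\alpha$ possible and forces the non-parallelism responsible for the failure of total order).

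Now I would pin down $s$-(anti)dominance in this picture. For $A\in\Lambda$, whether $A$ is $s$-dominant or $s$-antidominant is determined by which side of $H_A=H_{\gamma,m_A}$ the alcove $A$ lies on — say $A$ is $s$-dominant iff $A\subset H_{\gamma,m_A}^{-}$ (after fixing signs consistently, using that $s_{\alpha,n}$ and the translations act compatibly on the two half-spaces, cf.\ the display before Lemma~\ref{lemma-easypeasy}). The claim ``$\Lambda$ is all $s$-dominant or all $s$-antidominant'' then becomes: the sign $\langle x_n,\gamma^\vee\rangle - m_{A_n}$ is constant for interior points $x_n\in A_n:=\alpha\uparrow^n A_0$. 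To see this I would compute how $\langle x,\gamma^\vee\rangle$ changes as one moves from $A_n$ to $A_{n+1}$: $x_{n+1}$ can be taken to be $s_{\alpha,*}(x_n)$, and $\langle s_{\alpha,*}(x),\gamma^\vee\rangle = \langle x,\gamma^\vee\rangle - (\langle x,\alpha^\vee\rangle - *)\langle\alpha,\gamma^\vee\rangle$; since moving up the $\alpha$-string changes $\langle\cdot,\alpha^\vee\rangle$ monotonically, and $m_{A_{n+1}}$ shifts by the corresponding multiple of $\langle\alpha,\gamma^\vee\rangle$ (by the translation formula), the two effects cancel and the relevant sign is genuinely constant along the whole string. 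Hence all alcoves of $\Lambda$ have the same $s$-dominance status.

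\textbf{Main obstacle.} The delicate point is the sign-bookkeeping in the last paragraph: one must choose the interior points $x_n$, the integers $m_{A_n}$, and the orientations of the half-spaces in a mutually consistent way so that the two variations (from the reflection $s_{\alpha,*}$ acting on $x_n$, and from the translation shifting $m_{A_n}$) provably cancel rather than merely ``morally'' cancel. A clean way to avoid case analysis is to use $\DZ R$-equivariance (the left $\DZ R$-action preserves $\preceq$ and commutes with right multiplication by $s$, by the discussion preceding the lemma) to reduce to finitely many $\alpha$-strings modulo translation, and then to the two possible local pictures of how the $s$-wall sits relative to an $\alpha$-string; in each, total-orderedness of $\Lambda\cup\Lambda s$ is visibly equivalent to all $H_A$ being parallel to $\alpha$, and in the non-parallel case the monotonicity of $\langle\cdot,\alpha^\vee\rangle$ along $\Lambda$ forces constancy of the side of $H_{\gamma,m_A}$. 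I would also double-check the edge case where some $A\in\Lambda$ actually has $H_A$ parallel to $\alpha$ while others do not — the structural argument in the second paragraph should rule this out, but it is worth stating explicitly that the ``wall type'' is constant along an $\alpha$-string before drawing the dominance conclusion.
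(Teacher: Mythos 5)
Your geometric reframing is appealing, but the structural claim on which it rests is incorrect, and this is precisely the point you yourself flag as ``the delicate part.'' Writing $A=A_w$, the wall across which $s$ reflects $A$ is $H_A=w(H_s)$, where $H_s$ is the wall of $A_e$ fixed by $s$. When you pass from $A$ to $\alpha\uparrow A=s_{\alpha,n}(A)=A_{s_{\alpha,n}w}$, the corresponding wall becomes $H_{\alpha\uparrow A}=s_{\alpha,n}(H_A)$. A direct computation shows that $s_{\alpha,n}$ sends the hyperplane with normal $\gamma^\vee$ to the hyperplane with normal $\gamma^\vee-\langle\alpha,\gamma^\vee\rangle\alpha^\vee=s_\alpha^\vee(\gamma^\vee)$; this is \emph{not} parallel to $H_A$ unless $\langle\alpha,\gamma^\vee\rangle=0$ or $\gamma=\alpha$. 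But these are exactly the cases you need to exclude (you explicitly require $\gamma\neq\alpha$ and $\langle\gamma,\alpha^\vee\rangle\neq 0$ for the ``non-parallelism responsible for the failure of total order''). So in the case relevant to the lemma, the walls $\{H_A\mid A\in\Lambda\}$ do \emph{not} all lie in a single parallel family: the translates $\{H_{t_{k\alpha}A_0}\}$ are parallel to $H_{A_0}$, but the $\{H_{t_{k\alpha}(\alpha\uparrow A_0)}\}$ lie in the distinct direction $s_\alpha(\gamma)$. Your third paragraph's bookkeeping, which tracks the single pairing $\langle\cdot,\gamma^\vee\rangle$ along the string, therefore does not apply as stated, and the claimed cancellation has no basis. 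The phrase ``$s_{\alpha,0}$ permutes the root directions in a controlled way'' is true but does not support the conclusion ``all parallel.''

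For comparison, the paper avoids any wall geometry. It uses the same translation-equivariance reduction you invoke in your last paragraph to reduce to the single step $A\rightsquigarrow\alpha\uparrow A$, and then argues purely order-theoretically: if $A$ were $s$-antidominant while $\alpha\uparrow A$ were $s$-dominant, then Lemma \ref{lemma-propgenord1} (the interval property of $\{A,As\}$ and the monotonicity of right multiplication by $s$) forces the chain $A\preceq As\preceq(\alpha\uparrow A)s\preceq(\alpha\uparrow A)$, which by translation-invariance makes $\Lambda^\sharp$ totally ordered, a contradiction. That route is shorter and sidesteps the sign-bookkeeping entirely; if you want to pursue a geometric proof you would first need the correct statement that the walls alternate between two directions $\gamma$ and $s_\alpha(\gamma)$, and then redo the dominance comparison across that alternation --- at which point the order-theoretic argument is likely less work.
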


\begin{proof}  Let $A\in\CA$. Suppose that $A$ is $s$-antidominant. It suffices to show that each element in $\Lambda$ is $s$-antidominant.  Note that $\Lambda=\{t_{n\alpha} A\mid n\in\DZ\}\cup\{t_{n\alpha}(\alpha\uparrow A)\mid n\in\DZ\}$. As the translations preserve the order and commute with the right action of $s$ on $\Lambda^\sharp$, it suffices to show that $(\alpha\uparrow A)\preceq(\alpha\uparrow A)s$. But otherwise $(\alpha\uparrow A)s\preceq(\alpha\uparrow A)$, since both  elements are comparable in any case. This leads to a contradiction as follows: as $A\preceq As$, Lemma \ref{lemma-propgenord1} implies $A\preceq(\alpha\uparrow A)s$. If $A=(\alpha\uparrow A)s$, then $\Lambda=\Lambda s$ which contradicts our assumption that $\Lambda^\sharp$ is not totally ordered. Hence $A\prec (\alpha\uparrow A)s$. As $As$ and $(\alpha\uparrow A)s$ are contained in the same $\alpha$-string, they are comparable. Again by Lemma \ref{lemma-propgenord1}, $\{A,As\}$ is an interval, so $As\preceq(\alpha\uparrow A)s$, hence 
$$
A\preceq As\preceq(\alpha\uparrow A)s\preceq(\alpha\uparrow A).
$$
Since the root translations preserve the order, this implies that $\Lambda^\sharp$ is a totally ordered set, contrary to our assumption. Hence $(\alpha\uparrow A)\preceq (\alpha\uparrow A)s$.
\end{proof}


\section{The structure algebra}\label{sec-strucalg}

Let $k$ be a field. We denote by $X^\vee_k=X^\vee\otimes_\DZ k$ the $k$-vector space associated with the lattice $X^\vee$. For convenience, for $v\in X^\vee$, we often denote by $v$ its canonical image  $v\otimes 1$ in $X^\vee_k$.

\begin{definition} We say that $k$ satisfies the {\em GKM-condition} (with respect to $R$), if $\ch\, k\ne 2$ and if any two distinct positive roots $\alpha$ and $\beta$ are $k$-linearly independent, i.e. if  $\alpha\not\in k\beta$ in $X^\vee_k$ for $\alpha\ne\beta$.
\end{definition}

From now on we fix a field $k$ satisfying the above condition. Let $S=S(X^\vee_k)$ be the symmetric algebra of the $k$-vector space $X^\vee_k$. We consider $S$ as a $\DZ$-graded algebra with $X_k^\vee\subset S$ being the homogeneous component of degree 2. For a graded $S$-module $M=\bigoplus_{n\in\DZ}M_n$  the grading shift for $l\in\DZ$ is defined by $M[l]=\bigoplus_{n\in\DZ}M[l]_n$ with $M[l]_n=M_{n+l}$. Homomorphisms between graded modules are assumed to respect the grading. 

Let $\CX$ be a subset of $\CA$.
\begin{definition}\label{def-strucalg} Denote by $\CZ(\CX)$ the set of all $\CX$-tuples $(z_A)\in\prod_{A\in\CX} S$ that satisfy the following properties:
\begin{enumerate}
\item $z_A\equiv z_{s_{\alpha,n}(A)}\mod\alpha^\vee$ for all $A\in\CX$, $\alpha\in R^+$ and $n\in\DZ$ with $s_{\alpha,n}(A)\in\CX$,
\item $z_A=z_{t_\lambda(A)}$ for all $A\in\CX$ and $\lambda\in\DZ R$ with $t_\lambda(A)\in\CX$.
\end{enumerate}
\end{definition}
Then $\CZ(\CX)$  is a commutative, graded, unital $S$-algebra under coordinatewise addition and multiplication (note that the product is taken in the category of graded $S$-modules).
For  $\CX^\prime\subset\CX$ the projection $\prod_{A\in\CX}S\to\prod_{A\in\CX^\prime} S$ along the decomposition restricts to a homomorphism 
$$
\phi_\CX^{\CX^{\prime}}\colon\CZ(\CX)\to\CZ(\CX^\prime).
$$

\subsection{Special sections}
Let $\lambda\in X$. Recall the definition of the special section $\CK_\lambda$ from Section \ref{subsec-specsec}. Note that 
$$
\CZ(\CK_\lambda)=\left\{(z_A)\in\bigoplus_{A\in\CK_\lambda}S\left|\begin{matrix} z_A\equiv z_{s_{\alpha,\langle\lambda,\alpha^\vee\rangle}(A)}\mod\alpha^\vee \\
\text{ for all $A\in\CK_\lambda$, $\alpha\in R^+$}
\end{matrix}
\right\}\right..
$$
\begin{lemma}\label{lemma-isofin} The homomorphism $\CZ\to\CZ(\CK_\lambda)$ is an isomorphism.
\end{lemma}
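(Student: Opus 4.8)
The plan is to show that the restriction homomorphism $\phi := \phi_{\CA}^{\CK_\lambda}\colon \CZ = \CZ(\CA)\to\CZ(\CK_\lambda)$ is both injective and surjective, exploiting the fact, recorded in Lemma \ref{lemma-specset1}(2), that $\CK_\lambda$ is a full set of representatives for the $\DZ R$-orbits in $\CA$.

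\emph{Injectivity.} Suppose $z = (z_A)_{A\in\CA}\in\CZ$ maps to $0$, i.e.\ $z_A = 0$ for all $A\in\CK_\lambda$. Given an arbitrary $B\in\CA$, by Lemma \ref{lemma-specset1}(2) there is some $\mu\in\DZ R$ with $t_\mu(B)\in\CK_\lambda$. Condition (2) in Definition \ref{def-strucalg} then forces $z_B = z_{t_\mu(B)} = 0$. Hence $z = 0$, so $\phi$ is injective. This step is essentially immediate.

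\emph{Surjectivity.} This is the substantive part. Given $(z_A)_{A\in\CK_\lambda}\in\CZ(\CK_\lambda)$, I want to define $\tilde z_B$ for every $B\in\CA$ by picking $\mu\in\DZ R$ with $t_\mu(B)\in\CK_\lambda$ and setting $\tilde z_B := z_{t_\mu(B)}$. First I must check this is well-defined, i.e.\ independent of the choice of $\mu$: if $t_\mu(B), t_{\mu'}(B)\in\CK_\lambda$ then $t_{\mu-\mu'}$ maps one element of $\CK_\lambda$ to another; since $\CK_\lambda$ is a set of orbit representatives, two of its elements lie in the same $\DZ R$-orbit only if they are equal, so $t_\mu(B) = t_{\mu'}(B)$ and the value is unambiguous. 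Then condition (2) of Definition \ref{def-strucalg} holds for $\tilde z$ essentially by construction (if $t_\nu(B)\in\CA$ for $\nu\in\DZ R$, then $B$ and $t_\nu(B)$ lie in the same orbit, hence get the same value).

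The remaining point, and the one I expect to be the main obstacle, is verifying condition (1) for $\tilde z$: that $\tilde z_B\equiv \tilde z_{s_{\alpha,n}(B)}\bmod \alpha^\vee$ for all $B$, $\alpha\in R^+$, $n\in\DZ$. The hypothesis only gives us the congruences $z_A\equiv z_{s_{\alpha,\langle\lambda,\alpha^\vee\rangle}(A)}\bmod\alpha^\vee$ for $A\in\CK_\lambda$, i.e.\ only for the \emph{single} reflection hyperplane through $\lambda$ in each direction $\alpha$, so I need to transport a general reflection back into this situation. The key tool is Lemma \ref{lemma-easypeasy}(1)--(2): writing $s_{\alpha,n} = t_{(n-m)\alpha}\circ s_{\alpha,m}$ where $m = \langle\lambda,\alpha^\vee\rangle$, and noting $(n-m)\alpha\in\DZ R$, I can reduce the congruence for $s_{\alpha,n}$ acting on $B$ to the congruence for $s_{\alpha,m}$ acting on $t_{(n-m)\alpha}^{-1}$-translates, using that $\tilde z$ is already $\DZ R$-invariant and that $\alpha^\vee$ is itself translation-invariant. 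This reduces everything to checking: for $B\in\CA$ arbitrary, $\tilde z_B\equiv\tilde z_{s_{\alpha,m}(B)}\bmod\alpha^\vee$ with $m=\langle\lambda,\alpha^\vee\rangle$. Now pick $\nu\in\DZ R$ with $t_\nu(B)\in\CK_\lambda$; applying Lemma \ref{lemma-easypeasy}(2), $s_{\alpha,m}\circ t_\nu = t_{s_{\alpha,0}(\nu)}\circ s_{\alpha,m}$, and since $s_{\alpha,0}(\nu) = \nu - \langle\nu,\alpha^\vee\rangle\alpha\in\DZ R$, the alcove $t_{s_{\alpha,0}(\nu)}(s_{\alpha,m}(B)) = s_{\alpha,m}(t_\nu(B))$ again lies in $\CW_\lambda(A_\lambda^-) = \CK_\lambda$ (because $s_{\alpha,m} = s_{\alpha,\langle\lambda,\alpha^\vee\rangle}\in\CW_\lambda$ and $\CW_\lambda$ stabilizes $\CK_\lambda$). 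Therefore $\tilde z_B = z_{t_\nu(B)}$ and $\tilde z_{s_{\alpha,m}(B)} = z_{s_{\alpha,m}(t_\nu(B))}$, and the required congruence is exactly the defining congruence for $\CZ(\CK_\lambda)$ applied to $A = t_\nu(B)\in\CK_\lambda$. This shows $\tilde z\in\CZ$ with $\phi(\tilde z) = (z_A)$, completing surjectivity; since $\phi$ is manifestly an $S$-algebra homomorphism, it is the desired isomorphism.
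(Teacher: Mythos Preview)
Your argument is correct and follows essentially the same approach as the paper's proof: both exploit that $\CK_\lambda$ is a transversal for the $\DZ R$-action to get injectivity and to define the preimage $\tilde z$, and both use Lemma~\ref{lemma-easypeasy} to verify that the $\CK_\lambda$-representatives of $B$ and $s_{\alpha,n}(B)$ differ by $s_{\alpha,\langle\lambda,\alpha^\vee\rangle}$, reducing condition~(1) for $\tilde z$ to the defining congruence of $\CZ(\CK_\lambda)$. The paper states this last reduction in one line, whereas you spell it out via the factorization $s_{\alpha,n}=t_{(n-m)\alpha}\circ s_{\alpha,m}$; the content is the same.
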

\begin{proof} As $\CK_\lambda$ is a section for the $\DZ R$-action on $\CA$, the homomorphism is injective. For  $(z_A)\in\CZ(\CK_\lambda)$ and $B\in\CA$ define $z^\prime_B=z_A$ if $B\in A+\DZ R$. We claim that $(z^\prime)$ is contained in $\CZ$. Clearly, property (2) of Definition \ref{def-strucalg} is satisfied.  Let $\alpha\in R^+$ and $n\in\DZ$ and let $A,A^\prime\in\CK_\lambda$ be the representatives of the $\DZ R$-orbits of $B$ and $s_{\alpha,n}(B)$. Then $A^\prime=s_{\alpha,\langle\lambda,\alpha^\vee\rangle}(A)$ by Lemma \ref{lemma-easypeasy}. Hence $z_A\equiv z_{A^\prime}\mod\alpha^\vee$, hence $z_B\equiv z_{s_{\alpha,n}(B)}\mod\alpha^\vee$. It follows that $z^\prime=(z^\prime_B)$ is contained in $\CZ$, and it is, by construction, a preimage of $z$.
\end{proof}
The following innocent looking statement is in fact one of the cornerstones of the theory.
\begin{proposition}\label{prop-Zflab}  Suppose that $\CJ\subset\CK_\lambda$ is an open subset. Then the homomorphism $\CZ\to\CZ(\CJ)$ is surjective.
\end{proposition}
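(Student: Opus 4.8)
The plan is to reduce to the case where $\CJ$ differs from a larger open set by a single alcove, and then to lift sections one coordinate at a time. First I would observe that by Lemma \ref{lemma-isofin} it suffices to lift along the restriction $\phi_{\CK_\lambda}^{\CJ}\colon\CZ(\CK_\lambda)\to\CZ(\CJ)$. Since $\CK_\lambda$ is finite (it is in bijection with $\CW_\lambda$ via Lemma \ref{lemma-specset1}), one can write $\CK_\lambda\setminus\CJ=\{A_1,\dots,A_m\}$ and enumerate these alcoves so that each partial union $\CJ_i:=\CJ\cup\{A_1,\dots,A_i\}$ is again open in $\CK_\lambda$; concretely, list the alcoves of $\CK_\lambda\setminus\CJ$ in an order refining $\preceq$ so that at each stage we adjoin a $\preceq$-minimal element of the complement, which by the definition of ``open'' keeps $\CJ_i$ open. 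By induction it then suffices to treat the single step $\CJ_{i-1}\subset\CJ_i$, i.e.\ to show that $\CZ(\CJ_i)\to\CZ(\CJ_{i-1})$ is surjective, where $\CJ_i=\CJ_{i-1}\sqcup\{A\}$ with $A$ a $\preceq$-minimal element of $\CK_\lambda\setminus\CJ_{i-1}$.

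For this single step, given $(z_B)_{B\in\CJ_{i-1}}\in\CZ(\CJ_{i-1})$ I must choose $z_A\in S$ so that the compatibility conditions of Definition \ref{def-strucalg} involving $A$ are satisfied. Inside $\CK_\lambda$ the only relations are those of type (1), namely $z_A\equiv z_{s_{\alpha,\langle\lambda,\alpha^\vee\rangle}(A)}\bmod\alpha^\vee$ for those $\alpha\in R^+$ with $s_{\alpha,\langle\lambda,\alpha^\vee\rangle}(A)\in\CK_\lambda$; among these, the constraints that actually restrict $z_A$ are the ones where the reflected alcove already lies in $\CJ_{i-1}$. So I need an element $z_A$ of $S$ congruent to a prescribed value $z_{A'}$ modulo $\alpha^\vee$ for each such pair $(\alpha, A')$. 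This is a simultaneous congruence problem: I would argue that distinct such $\alpha$ give coprime conditions because, by the GKM-condition, distinct positive roots $\alpha,\beta$ span distinct height-one primes $(\alpha^\vee),(\beta^\vee)$ of $S$, so $(\alpha^\vee)+(\beta^\vee)=(\alpha^\vee,\beta^\vee)$ contains a nonzerodivisor — in fact $S/(\alpha^\vee,\beta^\vee)$ is a polynomial ring and the two classes can be matched. The crucial input is a \emph{consistency check}: whenever two reflections $s_{\alpha,\langle\lambda,\alpha^\vee\rangle}$ and $s_{\beta,\langle\lambda,\beta^\vee\rangle}$ both move $A$ into $\CJ_{i-1}$, the prescribed values must themselves be congruent modulo $\gcd$, i.e.\ $z_{A'}\equiv z_{A''}\bmod(\alpha^\vee,\beta^\vee)$ where $A'=s_{\alpha,\dots}(A)$, $A''=s_{\beta,\dots}(A)$; this should follow from the fact that $(z_B)$ already lies in $\CZ(\CJ_{i-1})$, tracing the relation through a chain of reflections connecting $A'$ and $A''$ inside $\CJ_{i-1}$, using that $\CW_\lambda$ is generated by reflections and the pair $\{\alpha,\beta\}$ generates a rank-two parabolic.

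The main obstacle I anticipate is precisely this consistency/compatibility verification: one must show that the finitely many congruences imposed on the single new coordinate $z_A$ are mutually compatible, so that a simultaneous solution exists in $S$. The geometric content is that $A$, being $\preceq$-minimal in the complement, has \emph{all} of its ``$\CW_\lambda$-reflection neighbours'' that are $\preceq$-below it already in $\CJ_{i-1}$, and the relevant reflections, together with the structure-algebra relations holding on $\CJ_{i-1}$, force the prescribed residues to agree on the overlaps. Concretely I would localize at each relevant prime $(\alpha^\vee)$, reduce the problem to: find $z_A\in S$ with $z_A\equiv c_\alpha \bmod \alpha^\vee$ for a finite set of roots $\alpha$ and prescribed $c_\alpha\in S/(\alpha^\vee)$; this solvability is a Chinese-remainder statement in the ring $S$ for the pairwise-coprime (by GKM) ideals $(\alpha^\vee)$, and it reduces to checking pairwise compatibility modulo $(\alpha^\vee)+(\beta^\vee)$, which is the step requiring the relations on $\CJ_{i-1}$. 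Once this is in hand, property (2) of Definition \ref{def-strucalg} is vacuous within a single $\CK_\lambda$ (no two distinct alcoves of $\CK_\lambda$ are in the same $\DZ R$-orbit, by Lemma \ref{lemma-specset1}(2)), so the lifted tuple automatically lies in $\CZ(\CJ_i)\cong\CZ$, completing the induction.
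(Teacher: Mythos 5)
Your reduction to the restriction $\CZ(\CK_\lambda)\to\CZ(\CJ)$ via Lemma~\ref{lemma-isofin}, and the further reduction to a single step $\CJ_{i-1}\subset\CJ_i=\CJ_{i-1}\sqcup\{A\}$ with $A$ a $\preceq$-minimal element of the complement, are both sound and agree with the setup the paper implicitly works in. The gap is in what you call the ``consistency check''. You treat the solvability of the system $z_A\equiv z_{A'}\pmod{\alpha^\vee}$ as a Chinese-remainder verification, but the ideals $(\alpha^\vee)$ are pairwise coprime only in the weak sense of having no common factor; they are \emph{not} comaximal (each $(\alpha^\vee)+(\beta^\vee)$ is a proper height-two ideal). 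In that regime CRT fails: pairwise compatibility $z_{A'}\equiv z_{A''}\pmod{(\alpha^\vee,\beta^\vee)}$ is necessary for a simultaneous lift but in general not sufficient once three or more congruences are in play. For instance in $S=k[x,y]$ the data $c_1=0\pmod{x}$, $c_2=0\pmod{y}$, $c_3=y\pmod{x+y}$ is pairwise compatible modulo $(x,y)$ on every overlap, yet has no simultaneous lift, since a solution would need to lie in $(xy)$ and also reduce to $-x$ in $S/(x+y)\cong k[x]$, which forces $x$ to be invertible. So the heart of the theorem is exactly the assertion that for the specific incidence pattern of the Bruhat graph of the finite Coxeter system $(\CW_\lambda,\CS_\lambda)$ this obstruction vanishes, and that is not a formal argument.

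This is where the paper takes a genuinely different route. It identifies $\CZ(\CK_\lambda)$ with the structure algebra $\CZ(R)$ of the finite moment graph, observes that surjectivity for all open $\CJ$ is precisely the flabbiness of $\CZ(R)$ in the sense of~\cite{FieDuke}, and then invokes Proposition~4.2 of that reference (flabbiness is equivalent to $\CZ(R)$ coinciding with global sections of the Braden--MacPherson sheaf) together with~\cite[Corollary~6.1]{LanJoA} (the BMP sheaf on the Bruhat graph of a finite Weyl group has all stalks of rank one). The rank-one statement is the combinatorial avatar of smoothness of the full flag variety, and it is a theorem, not a verification. Your ``tracing the relation through a chain of reflections in a rank-two parabolic'' would, if carried out, need to reprove an analogue of this; it is not clear it can be done locally at $A$ without the global moment-graph input. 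To repair the proposal you would either have to import the BMP/flabbiness machinery as the paper does, or give an independent argument (for example, exhibiting $\CZ(\CK_\lambda)$ as a free module of the right graded rank over the $\CW_\lambda$-invariants and deducing surjectivity from a rank count) — both of which are substantially more work than a congruence check.
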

\begin{proof} In order to prove this proposition, we use the theory of Braden--MacPherson sheaves on moment graphs associated to root systems. First, note that the homomorphism $\CZ\to\CZ(\CJ)$ factors over the restriction $\CZ\to\CZ(\CK_\lambda)$. By Lemma \ref{lemma-isofin}, this is an isomorphism. So it is enough to show that the restriction $\CZ(\CK_\lambda)\to\CZ(\CJ)$ is surjective. Let us identify the sets $\CW_\lambda$ and $\CK_\lambda$ via the bijection $\tau_\lambda$ studied in   Lemma \ref{lemma-specset1}. 
  As $(\CW_\lambda,\CS_\lambda)$ is isomorphic to $(\CW_0,\CS_0)$, we can identify $\CZ(\CK_\lambda)$ with 
$$
\CZ(R):=\left\{(z_x)\in\bigoplus_{x\in\CW_0}S\left| \begin{matrix} z_x\equiv z_{s_{\alpha,0}x}\mod\alpha^\vee\\ 
\text{ for all $x\in\CW_0$, $\alpha\in R^+$}
\end{matrix}\right\}\right..
$$
This is the structure algebra  associated with the finite Coxeter system $(\CW_0,\CS_0)$ and its (representation faithful) representation on $X_k$. As $\tau_\lambda$ is order preserving, the set $\CJ$ corresponds to an open subset of $\CW_0$ with respect to the Bruhat order. Then the statement is equivalent to saying that $\CZ(R)$ is flabby in the sense of Definition 3.6 in \cite{FieDuke}. This is the case, by Proposition 4.2. in loc. cit., if and only if $\CZ(R)$ coincides with the global sections of the Braden-MacPherson sheaf $\SB$ on the finite moment graph $\CG$ associated to the above reflection faithful representation. Now the latter statement is true if and only if all the ranks of $\SB$ on vertices of $\CG$ are 1. This fact has been shown in \cite[Corollary 6.1]{LanJoA}.

\end{proof}

In the following  sections we collect some simple facts about the category of $\CZ$-modules.
\subsection{Localizations}\label{sec-locSmod}
Set 
$$
S^\emptyset:=S[\beta^{-1}\mid \text{$\beta$ is a label of $\CG$}]
$$
and for a label $\alpha$ of $\CG$ set
$$
S^{\alpha}:=S[\beta^{-1}\mid \text{$\beta$ is a label of $\CG$ and $\beta\not\in k\alpha$}].
$$
For an $S$-module $M$ we set $M^{\emptyset}:=M\otimes_SS^{\emptyset}$ and $M^{\alpha}:=M\otimes_SS^{\alpha}$. If $M$ is torsion free as an $S$-module, then we have canonical inclusions $M\subset M^{\alpha}\subset M^{\emptyset}$. 

For $\alpha\in R^+$ set
  $$
  \hCW^{\alpha}=\langle s_{\alpha,n}, n\in\DZ\rangle+\DZ R.
  $$
 We denote by $\CA^{\emptyset}:=\CA/\DZ R$ and $\CA^{\alpha}:=\CA/\hCW^{\alpha}$ the corresponding orbit sets.

\begin{lemma} \label{lemma-locZ} 
\begin{enumerate}
\item The inclusion $\CZ\subset\bigoplus_{x\in\CA^{\emptyset}} \CZ(x)$ becomes an isomorphism after applying the functor $\cdot\otimes_S S^{\emptyset}$. 
\item Let $\alpha\in R^+$. Then the inclusion $\CZ\subset\bigoplus_{\Gamma\in\CA^{\alpha}}\CZ(\Gamma)$ becomes an isomorphism after applying the functor $\cdot\otimes_S S^{\emptyset}$. 
\end{enumerate}
\end{lemma}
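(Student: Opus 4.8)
The two statements are parallel, so I would prove them with a single argument, distinguishing only the set of hyperplanes one inverts. The key point is that $\CZ$, by Definition \ref{def-strucalg}, is defined by congruences of the form $z_A\equiv z_{s_{\alpha,n}(A)}\bmod \alpha^\vee$ together with the translation invariance $z_A=z_{t_\lambda(A)}$ for $\lambda\in\DZ R$. Inverting the element $\alpha^\vee$ (and all other root labels in case (1), or all labels not proportional to $\alpha$ in case (2)) kills precisely those congruences that survive, so the defining relations of $\CZ^\emptyset$ collapse to relations \emph{within} a single orbit and disappear \emph{between} distinct orbits.

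\textbf{Step 1: reduce to a statement about orbit sets.} First I would observe that for any subset $\CX\subset\CA$ the inclusion $\CZ(\CX)\hookrightarrow\bigoplus_{A\in\CX}S$ (or $\prod$, but $\CZ$ is supported on finitely many orbits after fixing a box, so the finiteness is harmless) respects the decomposition along any partition of $\CX$ into unions of orbits. For (1) the relevant partition is $\CA=\coprod_{x\in\CA^\emptyset}x$, and for (2) it is $\CA=\coprod_{\Gamma\in\CA^\alpha}\Gamma$. In each case I claim $\CZ=\bigoplus_{x}\CZ(x)\cap\CZ$, where the intersection is taken inside $\bigoplus_A S$; equivalently, the natural map $\CZ\to\bigoplus_x\CZ(x)$ (via $\phi_\CX^{\CX'}$) has image the tuples $(z^{(x)})_x$ that are ``compatible across orbits'', and it is injective. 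Injectivity is clear. So the content is that after $\otimes_S S^\emptyset$ the cross-orbit compatibility conditions become vacuous, and that each $\CZ(x)\otimes_S S^\emptyset$ is the full product $\bigoplus_{A\in x}S^\emptyset$ in case (1) (resp.\ that $\CZ(\Gamma)\otimes_S S^\emptyset=\bigoplus_{A\in\Gamma}S^\emptyset$ in case (2) — note again $S^\emptyset$, not $S^\alpha$).

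\textbf{Step 2: the cross-orbit conditions vanish after localization.} A defining congruence $z_A\equiv z_{s_{\beta,n}(A)}\bmod\beta^\vee$ relates $A$ to $s_{\beta,n}(A)$. In case (1) two alcoves $A$, $B$ lie in the same $\DZ R$-orbit iff $B=t_\lambda(A)$ for some $\lambda\in\DZ R$; if $B=s_{\beta,n}(A)$ then $A$ and $B$ lie in different $\DZ R$-orbits unless $s_{\beta,n}$ acts as a translation, which never happens for a genuine reflection. So in $\bigoplus_A S$ the reflection congruences are always between distinct $\DZ R$-orbits, and since $\beta^\vee$ becomes invertible in $S^\emptyset$, ``$\equiv\bmod\beta^\vee$'' is automatically satisfied — i.e.\ $\CZ^\emptyset=\bigoplus_{A\in\CA}S^\emptyset$ modulo only the translation relations, which is exactly $\bigoplus_{x\in\CA^\emptyset}\CZ(x)^\emptyset$ with $\CZ(x)^\emptyset=S^\emptyset$ (as $x$ is a single $\DZ R$-orbit with no reflection inside it). For case (2): $\hCW^\alpha=\langle s_{\alpha,n}\mid n\in\DZ\rangle+\DZ R$, so an orbit $\Gamma\in\CA^\alpha$ is a union of $\DZ R$-orbits closed under $s_{\alpha,n}$, and the only reflections that relate alcoves \emph{within} $\Gamma$ are the $s_{\alpha,n}$ (up to translation), contributing congruences mod $\alpha^\vee$; any congruence mod $\beta^\vee$ with $\beta\notin k\alpha$ relates different $\hCW^\alpha$-orbits, and such $\beta^\vee$ are inverted in $S^\emptyset$, so those become vacuous. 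Hence $\CZ^\emptyset=\bigoplus_{\Gamma}\CZ(\Gamma)^\emptyset$; here one must further check $\CZ(\Gamma)^\emptyset=\bigoplus_{A\in\Gamma}S^\emptyset$, which holds because inside $S^\emptyset$ even $\alpha^\vee$ is invertible, so the remaining mod-$\alpha^\vee$ congruences also trivialize.

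\textbf{Step 3: exactness of localization and conclusion.} Since $S^\emptyset$ is a localization of $S$, the functor $\cdot\otimes_S S^\emptyset$ is exact, so it commutes with the finite direct sum and with taking the submodule defined by a (finite, after restricting to a box) system of congruences; thus $\CZ\otimes_S S^\emptyset$ is computed by applying the localization termwise to the description in Step 2, and the inclusion becomes the asserted isomorphism. \emph{The main obstacle} is bookkeeping rather than conceptual: one needs to argue cleanly that every reflection $s_{\beta,n}$ moving an alcove within the prescribed orbit set is, up to a translation in $\DZ R$, one of the ``allowed'' reflections (none at all in case (1), the $s_{\alpha,n}$ in case (2)), which is where Lemma \ref{lemma-easypeasy}(1)–(2) enters; and one should be slightly careful that $\CZ(\CX)$ is a possibly infinite product, handled by noting the claim is local on the box decomposition of $\CA$ where everything is finite, or by observing that localization commutes with the relevant products here because each orbit meets $\CK_\lambda$ in finitely many alcoves.
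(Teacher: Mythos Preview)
Your proof is correct but takes a different route from the paper's. The paper argues by a one-line ``clearing denominators'' trick: for part (2), set $\gamma\in S$ equal to the product of all $\beta^\vee$ with $\beta\in R^+\setminus\{\alpha\}$; then for any $z\in\bigoplus_\Gamma\CZ(\Gamma)$ one has $\gamma z\in\CZ$ (the $\alpha$-congruences already hold within each $\CZ(\Gamma)$, and every $\beta$-congruence with $\beta\ne\alpha$ is forced because $\beta^\vee\mid\gamma$), whence $z=\gamma^{-1}(\gamma z)$ lies in the image of $\CZ\otimes_S S^\alpha$. Part (1) is identical with $\gamma$ the product over all of $R^+$. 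You instead analyze which congruences relate alcoves in the same orbit versus different orbits --- showing that $s_{\beta,n}$ never preserves a $\DZ R$-orbit, and preserves an $\hCW^\alpha$-orbit only when $\beta=\alpha$ --- and then invoke exactness of localization on the kernel presentation of $\CZ$ (reduced to a finite situation via Lemma~\ref{lemma-isofin}). The paper's argument is shorter and avoids the orbit-by-orbit bookkeeping; yours makes the orbit structure transparent and explains \emph{why} the decomposition is along $\CA^\emptyset$ or $\CA^\alpha$ specifically.

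One point worth flagging: the statement of (2) almost certainly contains a typo and should read $S^\alpha$ rather than $S^\emptyset$ --- the paper's own proof establishes the $S^\alpha$ version, and it is this stronger version that is invoked in Section~\ref{subsec-candec} to obtain the canonical decomposition $M^\alpha=\bigoplus_\Gamma M^{\alpha,\Gamma}$. You have literally proved the weaker $S^\emptyset$ statement, which in fact follows from (1) by regrouping orbits. Your method adapts immediately to the intended $S^\alpha$ version: after $\otimes_S S^\alpha$ the congruence targets $S/\beta^\vee S$ for $\beta\ne\alpha$ vanish while the $\alpha$-targets survive, and the resulting kernel is precisely $\bigoplus_\Gamma\CZ(\Gamma)\otimes_S S^\alpha$.
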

\begin{proof} As $\CZ$ is torsion free as an $S$-module, the localizations of the inclusions are injective again. Hence we only have to prove that they are surjective. Let us prove part (2). Let $\gamma\in S$ be the product of all $\beta\in R^+\setminus\{\alpha\}$. Then $\gamma$ is invertible in $S^{\alpha}$. Then for any  $z\in\bigoplus_{\Gamma}\CZ(\Gamma)$ the element $\gamma z$ is contained in $\CZ$, hence $z=\gamma^{-1}(\gamma z)$ is contained in $\CZ\otimes_S S^{\alpha}$. Hence the surjectivity. Part (1) is proven in an analogous way. 
\end{proof}

\subsection{Localizations of $\CZ$-modules}\label{subsec-candec}

Here is the definition of a convenient category of $\CZ$-modules.
\begin{definition} We denote by $\CZ\catmod^f$ the full subcategory of the category of $\CZ$-modules that contains all objects $M$ that satisfy the following:
\begin{enumerate}
\item as an $S$-module $M$ is torsion free,
\item  $M=\bigcap_{\alpha\in R^+} M^{\alpha}\subset M^{\emptyset}$.
\end{enumerate}
\end{definition}

Let $M\in\CZ\catmod^f$. Then $M^{\alpha}$ is a $\CZ^{\alpha}$-module, and Lemma \ref{lemma-locZ} yields  a {\em canonical} decomposition
$$
M^{\alpha}=\bigoplus_{\Gamma\in\CA^{\alpha}}M^{\alpha,\Gamma},
$$
where each $M^{\alpha,\Gamma}$ is a $\CZ(\Gamma)$-module.  Analogously there is a canonical decomposition 
$$
M^{\emptyset}=\bigoplus_{x\in\CA^{\emptyset}}M^{\emptyset,x}.
$$
 As $M$ is torsion free as an $S$-module,  we view $M$ as a submodule in $M^{\emptyset}$. The above decomposition  allows us to write each $m\in M$ as an $\CA^{\emptyset}$-tuple $(m_x)$ with $m_x\in M^{\emptyset,x}$. 

\begin{definition} The {\em $\CZ$-support} of   $M$ is $
\supp_\CZ\, M:=\{x\in\CA^{\emptyset}\mid M^{\emptyset,x}\ne 0\}.
$
\end{definition}
\begin{remark}\label{rem-loc} Let $M$ be an object in $\CZ\catmod^f$. Then the localizations  $M^{\alpha}$ for the root $\alpha$ and $M^{\emptyset}$ are objects in $\CZ\catmod^f$ again. Moreover, $M$ can carry at most one $\CZ^{\alpha}$- or $\CZ^{\emptyset}$-structure.
\end{remark}
\subsection{Submodules and quotients}
Let $M\in\CZ\catmod^f$. For a subset  $ T $ of  $\CA^{\emptyset}$ define
\begin{align*}
M_{ T }&:=M\cap\bigoplus_{x\in T } M^{\emptyset,x},\\
M^{ T }&:=M/M_{\CA^{\emptyset}\setminus T }=\im\left(M\subset M^{\emptyset}=\bigoplus_{x\in\CA^{\emptyset}}M^{\emptyset,x}\xrightarrow{p} \bigoplus_{x\in T }M^{\emptyset,x}\right)
\end{align*}
where $p$ denotes the projection along the decomposition. It is easy to see  that $M_{ T }$ and $M^{ T }$ are contained in  $\CZ\catmod^f$ again. Note  that $M_{ T }$ is the largest submodule of $M$ that is supported inside $ T $, and $M^{ T }$ is the largest quotient of $M$ supported inside $ T $. Both constructions are clearly functorial. 
Write  $M^{x,y,\dots,z}$ instead of $M^{\{x,y,\dots,z\}}$. The objects $M^x$ are called the {\em stalks} of $M$.

 The following is a slight, relative generalization of the above construction (which corresponds to the case $N=0$).

\begin{lemma} \label{lemma-subquotZ} Let $ T $ be a subset of $\CA^{\emptyset}$. 
\begin{enumerate}
\item Let $f\colon M\to N$ be a surjective homomorphism in $\CZ\catmod^f$. Then there is a factorization

\centerline{
\xymatrix{
M\ar@/_1pc/[rr]_f\ar[r]^{f_1}&[f, T ]\ar[r]^{f_2}&N
}
}
\noindent
in $\CZ\catmod^f$  such that the following holds:
\begin{enumerate}
\item $f_1$ is surjective and $\supp_\CZ\ker f_1\subset  T $, 
\item $\supp_\CZ\ker f_2\subset\CA^{\emptyset}\setminus T $. 
\end{enumerate}
\item Suppose that $f\colon M\to N$ and $f^\prime\colon M^{\prime}\to  N^{\prime}$ are surjective homomorphisms in $\CZ\catmod^f$.  Suppose that $g\colon M\to M^\prime$ and $h\colon N\to N^\prime$ are  such that $h\circ f=f^\prime\circ g$. Then there exists a unique homomorphism $j\colon [f, T ]\to [f^\prime, T ]$ such that the two squares in the following diagram

\centerline{
\xymatrix{
M\ar@/^1pc/[rr]^f\ar[d]^g\ar[r]_{f_1}&[f, T ]\ar[d]^j\ar[r]_{f_2}&N\ar[d]^h\\
M^\prime\ar@/_1pc/[rr]_{f^\prime}\ar[r]^{f^\prime_1}&[f^\prime, T ]\ar[r]^{f^\prime_2}&N^\prime
}
}
\noindent
commute.
\end{enumerate}
\end{lemma}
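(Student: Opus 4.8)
The plan is to build $[f,T]$ functorially out of the constructions $M_T$, $M^T$ introduced just before the lemma, and then verify the two claimed properties by tracking $\CZ$-supports through the localization $(\cdot)^\emptyset$. First I would treat part (1). Given the surjection $f\colon M\to N$, I would set $K:=\ker f$, which is again an object of $\CZ\catmod^f$ (it is torsion free as a submodule of $M$, and closed under the intersection-of-localizations condition because localization is exact and the decompositions are canonical). The key point is that $K=K_T\oplus_{\text{(as submodule)}}\cdots$ does \emph{not} split, but $K_T$ is a submodule of $K$, hence of $M$; I would then define
$$
[f,T]:=M/K_T.
$$
Then $f_1\colon M\to M/K_T$ is the canonical projection, which is surjective with kernel $K_T$, and $\supp_\CZ K_T\subset T$ by the very definition of the construction $(\cdot)_T$ (it is the largest submodule of $K$ supported in $T$). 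Since $K_T\subset K=\ker f$, the map $f$ factors through a surjection $f_2\colon M/K_T\to N$ with $\ker f_2=K/K_T$. It remains to check $\supp_\CZ(K/K_T)\subset\CA^\emptyset\setminus T$: applying $(\cdot)^\emptyset$ and using that this functor is exact and compatible with the canonical decompositions, one has $(K/K_T)^{\emptyset,x}=K^{\emptyset,x}/(K_T)^{\emptyset,x}$, and for $x\in T$ the inclusion $(K_T)^{\emptyset,x}=K^{\emptyset,x}$ holds because $\bigoplus_{x\in T}K^{\emptyset,x}$ already lies in $K^\emptyset$ and meets $K$ in exactly $K_T$ after taking the $(\cdot)^\emptyset$-stable part — more precisely $(K_T)^\emptyset=\bigoplus_{x\in T}K^{\emptyset,x}$, which gives the claim coordinatewise.

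For part (2), uniqueness of $j$ is immediate: $f_1$ is an epimorphism, so any $j$ with $j\circ f_1=f_1'\circ g$ is determined. For existence, with the description $[f,T]=M/K_T$ and $[f',T]=M'/K'_T$ (where $K=\ker f$, $K'=\ker f'$), it suffices to show $g(K_T)\subset K'_T$. From $h\circ f=f'\circ g$ we get $g(K)=g(\ker f)\subset\ker f'=K'$. Now $g$ restricts to a homomorphism $K\to K'$ in $\CZ\catmod^f$; since localization and the canonical decompositions are functorial, $g$ sends $K^{\emptyset,x}$ into $K'^{\emptyset,x}$ for each $x$, hence sends $(K_T)^\emptyset=\bigoplus_{x\in T}K^{\emptyset,x}$ into $\bigoplus_{x\in T}K'^{\emptyset,x}=(K'_T)^\emptyset$, and intersecting with $M'$ gives $g(K_T)\subset K'_T$. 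Therefore $g$ descends to $j\colon M/K_T\to M'/K'_T$, and the left square commutes by construction. The right square then commutes because $f_1'\circ g = j\circ f_1$ composed into $N'$ gives $f_2'\circ j\circ f_1 = f_2'\circ f_1'\circ g = f'\circ g = h\circ f = h\circ f_2\circ f_1$, and cancelling the epimorphism $f_1$ yields $f_2'\circ j = h\circ f_2$.

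The main obstacle I anticipate is not any single deep step but rather the bookkeeping needed to justify that all the relevant operations — forming kernels, passing to $(\cdot)_T$ and $(\cdot)^T$, and localizing at $\emptyset$ — commute with one another in the required way, i.e. that $(K_T)^{\emptyset,x} = K^{\emptyset,x}$ for $x \in T$ and $=0$ otherwise, and that a morphism in $\CZ\catmod^f$ respects the canonical decomposition of the $\emptyset$-localization. These facts follow from Lemma \ref{lemma-locZ} and the exactness of localization, together with Remark \ref{rem-loc} (uniqueness of the $\CZ^\emptyset$-structure, which pins down the decomposition), but assembling them cleanly is where the care is required. Once that compatibility is in hand, both parts are essentially formal diagram chasing with the explicit model $[f,T]=M/(\ker f)_T$.
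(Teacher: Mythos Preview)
Your approach is essentially identical to the paper's: both define $[f,T]:=M/(\ker f)_T$ and then read off the support conditions from the identification $\ker f_2 = K/K_T = K^{\CA^\emptyset\setminus T}$, with part (2) following from the functoriality of $(\cdot)_T$. The one point you gloss over that the paper makes explicit is verifying that $M/K_T$ actually lies in $\CZ\catmod^f$ --- in particular, that it is $S$-torsion free: if $m\in M$ maps to a torsion element of $M/K_T$, then $f(m)$ is torsion in $N$ hence zero, so $m\in K$, and since $K/K_T=K^{\CA^\emptyset\setminus T}$ is torsion free one gets $m\in K_T$. This is routine but worth stating, since without it the phrase ``factorization in $\CZ\catmod^f$'' in the lemma is not yet justified.
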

\begin{proof} Let  $W\subset M$ be the kernel of $f$, and set $[f, T ]:=M/ W_ T $ and let $f_1\colon M\to M/W_ T $ and $f_2\colon M/W_ T \to N$ be the canonical homomorphisms. We claim $M/W_ T $  is torsion free as an $S$-module. Indeed, let  $m\in M$ be a preimage of a torsion element $m^\prime$ in $M/W_ T $. Then $f(m)=0$ as $N$ is torsion free, so $m\in W$. Since an $S$-multiple of $M$ is contained in $W_ T $ and since $W/W_ T $  (this is $W^{\CA^{\emptyset}\setminus T }$) is torsion free, we have $m\in W_ T $, so $m^\prime=0$. 
By construction, the kernel $W_ T $ of $f_1$ is supported inside $ T $, and the kernel $W^{\CA^{\emptyset}\setminus T }=W/W_ T $ is supported inside $\CA^{\emptyset}\setminus T $. Hence we proved (1). Statement (2) follows from the functoriality of the construction of $W_ T $.
\end{proof}

\section{Sheaves on partially ordered sets}\label{sec-sheavesparord}
This section provides some  basic results on the topology of partially ordered sets and the theory of sheaves on such topological spaces. 


\subsection{A topology on partially ordered sets}\label{subsec-TopPar} Let us fix a partially ordered set $(\CX,\preceq)$. 
For an element $A$ of $\CX$ we will use the short hand notation $\{\preceq A\}=\{B\in\CX\mid B\preceq A\}$. The notations $\{\succeq A\}$, $\{\prec A\}$, etc. have an analogous meaning. 

Here is the definition of the topology on $\CX$.
\begin{definition} A subset $\CJ$ of $\CX$ is called {\em open}, if $A\in \CJ$ and $B\preceq A$ implies $B\in \CJ$, i.e. if 
$
\CJ=\bigcup_{A\in\CJ}\{\preceq A\}.
$
\end{definition}
This clearly defines a topology on the set $\CX$. The following statements are easy to check. 
\begin{remark}
\begin{enumerate}
\item A subset $\CI$ of $\CX$ is closed if and only if $\CI=\bigcup_{A\in\CI}\{\succeq A\}$.
\item Arbitrary unions and intersections of open sets are open. The same holds for closed sets. 
\item For any subset $ \CT $ of $\CX$, the set 
$ \CT ^-:=\bigcup_{A\in \CT }\{\preceq A\}$ is the smallest open subset that contains $ \CT $, and $ \CT ^+:=\bigcup_{A\in \CT }\{\succeq A\}$ is the smallest closed subset that contains $ \CT $. 
\item A subset $\CK$ is locally closed if and only if $\CK=\CK^+\cap\CK^-$. If $\CK$ is locally closed, then $\CK^-\setminus\CK$ is  open. 
\end{enumerate}
\end{remark}

\subsection{(Pre-)sheaves on partially ordered sets}\label{subsec-PosetSheaves}
Now suppose $\bA$ is an abelian category that has arbitrary products. We will study  (pre-)sheaves on the space $\CX$ with values in $\bA$. Recall that a presheaf $\SM$ on $\CX$ with values in $\bA$ is given by objects $\SM^\CJ$ in $\bA$ for any open subset $\CJ$ of $\CX$, together with restriction morphisms $r_{\CJ}^{\CJ^\prime}\colon \SM^\CJ\to\SM^{\CJ^\prime}$ for inclusions $\CJ^\prime\subset\CJ$ that satify $r_{\CJ^\prime}^{\CJ^{\prime\prime}}\circ r_{\CJ}^{\CJ^\prime}=r_{\CJ}^{\CJ^{\prime\prime}}$ for nested inclusions $\CJ^{\prime\prime}\subset\CJ^\prime\subset\CJ$, and $r_{\CJ}^{\CJ}=\id_{\SM^\CJ}$.  We will often use the notation $m|_{\CJ^\prime}=r_{\CJ}^{\CJ^\prime}(m)$. A morphism $f\colon\SM\to\SN$ of presheaves is given by a collection $f^\CJ\colon \SM^\CJ\to\SN^\CJ$ of morphisms in $\bA$ that commute with the restriction morphisms in the obvious way.

\subsection{Sheafification}
Let $\SM$ be a presheaf on $\CX$. For a  (not necessarily open)  subset $\CT$ of $\CX$ define
$$
\SM^{\{\CT\}}:=\left\{(m_A)\in\prod_{A\in \CT}\SM^{\preceq A}\left|\begin{matrix} m_A|_{\preceq C}=m_B|_{\preceq C}\text{ for all $A,B\in \CT $} \\\text{ and $C\in\CX$ with $C\preceq A$, $C\preceq B$}\end{matrix}\right\}\right..
$$
Note that $\SM^{\{\emptyset\}}=0$.
For $\CT^\prime\subset \CT\subset\CX$, the projection $\prod_{A\in \CT}\SM^{\preceq A}\to\prod_{A\in \CT^\prime}\SM^{\preceq A}$ along the product decomposition obviously yields a morphism
$$
s_{\CT}^{\CT^\prime}\colon \SM^{\{\CT\}}\to \SM^{\{\CT^\prime\}}.
$$

 As any open subset $\CJ$ of $\CX$ has the unique finest open covering $\CJ=\bigcup_{A\in\CJ}\{\preceq A\}$, the sheafification of $\SM$ is the sheaf with sections $\SM^{\{\CJ\}}$ for any open subset $\CJ$, and restriction morphisms $s_{\CJ}^{\CJ^\prime}$. The canonical morphism of $\SM$ into its sheafification is the following. 
For any open subset $\CJ$  the direct product of the morphisms $r_{\CJ}^{\preceq A}$ with $A\in\CJ$ yields a morphism
$$
t_{\CJ}\colon\SM^{\CJ}\to \SM^{\{\CJ\}}.
$$
Hence $\SM$ is a sheaf if and only if $t_\CJ$ is an isomorphism for any open subset $\CJ$ of $\CX$. 
 It follows that  a sheaf $\SM$ on $\CX$ is uniquely given by the data of objects $\SM^{\preceq A}$ for all $A\in\bA$ and restriction morphisms $\SM^{\preceq A}\to\SM^{\preceq B}$ for $B\preceq A$ that satisfy the obvious compatibility conditions.

\begin{remark}\label{rem-Tmaxel}  Suppose that  $\CT^\prime\subset \CT$ is such that for all $A\in \CT$ there exists some $B\in \CT^\prime$ with $A\preceq B$. Then $s_{\CT}^{\CT^\prime}\colon \SM^{\{\CT\}}\to \SM^{\{\CT^\prime\}}$ is  an isomorphism (as for $m=(m_A)\in \SM^{\{\CT\}}$ any component $m_A$ is already determined by $m_B$ if $A\preceq B$).
\end{remark}

\begin{lemma}\label{lemma-totord} Let $\CT\subset\CX$ be a finite totally ordered subset. Suppose that $\SM$ is a presheaf on $\CX$ with $\SM^{\emptyset}=\{0\}$ and with the property that for any open set $\CJ$ the restriction morphism $\SM^{\CJ}\to\SM^{(\CJ\cap \CT)^-}$ is an isomorphism. Then $\SM$ is a sheaf supported inside $\CT$.
\end{lemma}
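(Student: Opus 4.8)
The goal is to verify the sheaf axiom for $\SM$, i.e. that for every open $\CJ\subseteq\CX$ the canonical morphism $t_\CJ\colon\SM^\CJ\to\SM^{\{\CJ\}}$ is an isomorphism, and moreover that $\SM$ is supported inside $\CT$. Here it will be useful to record the (harmless) abbreviation $\CJ_\CT:=(\CJ\cap\CT)^-$ for the smallest open set containing $\CJ\cap\CT$. The plan is to reduce everything to the corresponding statement for $\CX$ replaced by the finite totally ordered set $\CT$, where the sheaf axiom is nearly trivial because a finite totally ordered poset has a unique maximal element on each open piece, so sections are determined by a single stalk (this is exactly the situation of Remark~\ref{rem-Tmaxel}).

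\textbf{Step 1: support.} First I would observe that the hypothesis, applied to the open set $\CJ=\{\preceq A\}$ for an arbitrary $A\in\CX\setminus\CT$, forces $\SM^{\preceq A}\to\SM^{(\{\preceq A\}\cap\CT)^-}$ to be an isomorphism. When $A\notin\CT$, I claim the target is $\SM^{(\{\prec A\}\cap\CT)^-}$, and iterating down a maximal chain (everything is finite since $\CT$ is finite, and the relevant chains lie in $\CT$) one eventually reaches $\SM^\emptyset=0$ or sees that the stalk $\SM^A:=\ker(\SM^{\preceq A}\to\SM^{\prec A})$ vanishes; the precise bookkeeping is that $\SM^{\{A\}}$-type contributions only occur at points of $\CT$. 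This gives ``supported inside $\CT$''. I would state this cleanly as: for $A\notin\CT$ the restriction $\SM^{\preceq A}\to\SM^{\prec A}$ is an isomorphism.

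\textbf{Step 2: the sheaf axiom.} Fix an open $\CJ$. By Remark~\ref{rem-Tmaxel} applied inside $\CT$, the projection $\SM^{\{\CJ\}}\to\SM^{\{\CJ\cap\CT\}}$ is an isomorphism, since every element of $\CJ$ that does lie below some element of $\CJ\cap\CT$ is handled automatically, and the ones that don't have vanishing stalk by Step 1 — here one has to check carefully that the compatibility conditions defining $\SM^{\{\CJ\}}$ are not lost, using that $\{\preceq A\}$ for $A\in\CJ$ is covered, up to the support statement, by the $\{\preceq B\}$ with $B\in\CJ\cap\CT$, $B\succeq$ the relevant parts of $A$. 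Now I have the square
\[
\xymatrix{
\SM^\CJ \ar[r]^{t_\CJ} \ar[d] & \SM^{\{\CJ\}} \ar[d]^{\cong} \\
\SM^{\CJ_\CT} \ar[r]^{t_{\CJ_\CT}} & \SM^{\{\CJ_\CT\}}
}
\]
The left vertical map is an isomorphism by hypothesis (it is $\SM^\CJ\to\SM^{(\CJ\cap\CT)^-}$). The right vertical map is an isomorphism by the previous sentence together with $\{\CJ_\CT\cap\CT\}=\{\CJ\cap\CT\}$. So it suffices to show $t_{\CJ_\CT}$ is an isomorphism, i.e. I may assume $\CJ=\CJ_\CT$ is generated by its intersection with $\CT$.

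\textbf{Step 3: the totally ordered case.} With $\CJ=(\CJ\cap\CT)^-$ and $\CT$ finite totally ordered, let $A_0$ be the largest element of $\CJ\cap\CT$ (it exists by finiteness and total order, provided $\CJ\cap\CT\neq\emptyset$; if it is empty then $\CJ$ is covered by empty stalks and both sides vanish). Then $\CJ=\{\preceq A_0\}$: indeed $\CJ\cap\CT=\{\preceq A_0\}\cap\CT$ and $\CJ$ is the downward closure. Hence $\SM^\CJ=\SM^{\preceq A_0}$ and also $\SM^{\{\CJ\}}\xrightarrow{\;\cong\;}\SM^{\{A_0\}}=\SM^{\preceq A_0}$ again by Remark~\ref{rem-Tmaxel} (the single element $A_0$ dominates all of $\CJ\cap\CT$, hence, up to the support statement of Step 1, all of $\CJ$), and under these identifications $t_\CJ$ is the identity. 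This proves $t_\CJ$ is an isomorphism for all open $\CJ$, so $\SM$ is a sheaf, and by Step 1 it is supported inside $\CT$.

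\textbf{Main obstacle.} The only genuinely delicate point is the interplay in Step 2 between the support statement and the gluing conditions: one must be sure that discarding the coordinates $m_A$ with $A\notin\CT$ from a compatible family in $\SM^{\{\CJ\}}$ does not lose information and that the remaining family over $\CJ\cap\CT$ still satisfies all required compatibilities — equivalently, that $\SM^{\preceq A}$ for $A\notin\CT$ really is computed by restriction to the open set generated by the $\CT$-points below $A$. This is where the hypothesis ``$\SM^\CJ\to\SM^{(\CJ\cap\CT)^-}$ is an isomorphism for \emph{all} open $\CJ$'' (not just for $\CJ$ generated by $\CT$) is used in full strength, and it should be invoked carefully rather than waved through.
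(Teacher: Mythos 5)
Your plan follows the paper's proof (with an added, harmless intermediate object $\SM^{\{\CJ\cap\CT\}}$), and Steps~1 and~3 are sound. However, the ``main obstacle'' you flag at the end is a real gap: your Step~2 asserts $\SM^{\{\CJ\}}\cong\SM^{\{\CJ\cap\CT\}}$ ``by Remark~\ref{rem-Tmaxel},'' but that remark does not apply with $\CT=\CJ$, $\CT'=\CJ\cap\CT$, because nothing forces every $B\in\CJ$ to lie below some element of $\CJ\cap\CT$. You note this must be ``invoked carefully rather than waved through'' but then do wave it through; a complete argument has to supply the verification. (The paper's own terse citation of Remark~\ref{rem-Tmaxel} glosses over the same point.)

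Here is one way to close the gap that also subsumes your Steps~2 and~3. Assume $\CJ\cap\CT\neq\emptyset$ and let $A$ be its maximal element, so $(\CJ\cap\CT)^-=\{\preceq A\}$. The composite of $t_\CJ\colon\SM^\CJ\to\SM^{\{\CJ\}}$ with the component projection $\pr_A\colon\SM^{\{\CJ\}}\to\SM^{\preceq A}$, $(m_B)\mapsto m_A$, equals the restriction $r_\CJ^{\preceq A}$, which is an isomorphism by hypothesis; so it suffices to prove $\pr_A$ injective. Take $(m_B)\in\SM^{\{\CJ\}}$ and $B\in\CJ$. If $\{\preceq B\}\cap\CT=\emptyset$, then $\SM^{\preceq B}\cong\SM^{\emptyset}=0$ and $m_B=0$. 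Otherwise let $D$ be the maximal element of $\{\preceq B\}\cap\CT$; then $D\in\CJ\cap\CT$, hence $D\preceq A$, and the compatibility relation defining $\SM^{\{\CJ\}}$ gives $m_B|_{\preceq D}=m_A|_{\preceq D}$. Since $\{\preceq D\}=(\{\preceq B\}\cap\CT)^-$, the hypothesis makes $r_{\preceq B}^{\preceq D}$ an isomorphism, so $m_B$ is determined by $m_A$. Hence $\pr_A$ is injective, $t_\CJ$ is an isomorphism for all open $\CJ$, and $\SM$ is a sheaf; the support claim is exactly your Step~1. This is the careful invocation you correctly identify as necessary but leave undone.
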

\begin{proof} We show that $\SM$ coincides with its sheafification, i.e. that $t_\CJ\colon \SM^{\CJ}\to \SM^{\{\CJ\}}$ is an isomorphism for any open subset $\CJ$. Fix $\CJ$. As $\CT$ is totally ordered and finite, $(\CJ\cap \CT)^-$ is either empty or of the form $\{\preceq A\}$ for some $A\in\CJ\cap \CT$. In the first case, $\SM^{\CJ}=\SM^{\{\CJ\}}=0$. In the second case, $\SM^{\CJ}\cong \SM^{\preceq A}$ and $\SM^{\{\CJ\}}\cong\SM^{\preceq A}$ (by Remark \ref{rem-Tmaxel}) and $t_\CJ$ is obviously an isomorphism. If $A\not\in\CT$, then $\SM^{\preceq A}\cong\SM^{\prec A}$, hence $A$ is not contained in the support of $\SM$.
\end{proof}
\subsection{Subquotients and the $\preceq$-support}

Recall the following notion of sheaf theory. 
\begin{definition} A presheaf $\SM$ on $\CX$ is called {\em flabby}, if for any inclusion $\CJ^\prime\subset\CJ$ of open subsets of $\CX$ the restriction morphism $\SM^{\CJ}\to\SM^{\CJ^\prime}$ is surjective.
\end{definition}

Let $\SM$ be a  presheaf on $\CX$ with values in $\bA$, and let $\CK$ be a locally closed subset of $\CX$. Recall that we denote by $\CK^-$ the smallest open subset that contains $\CK$, and that $\CK^-\setminus\CK$ is open again. 

\begin{definition} Define  $
\SM_{[\CK]}$ as the kernel of the restriction morphism $r_{\CK^-}^{\CK^-\setminus\CK}\colon\SM^{\CK^-}\to \SM^{\CK^-\setminus\CK}$.
\end{definition}
In sheaf theory language, those are the sections of $\SM$ supported inside $\CK$. Note that a singleton $\{A\}$ is always locally closed (with $\CK^-=\{\preceq A\}$ and $\CK^-\setminus\CK=\{\prec A\}$). We slightly simplify notation and write $\SM_{[A]}$ instead of $\SM_{[\{A\}]}$.
\begin{definition} Define the {\em $\preceq$-support} of  a presheaf $\SM$ as  $\supp_{\preceq}\, \SM:=\{A\in\CX\mid \SM_{[A]}\ne 0\}$.
\end{definition}
We warn the reader that this is {\em not} the sheaf theoretic support of $\SM$, as the latter would be the set of all $A$ with $\SM^{\preceq A}\ne 0$ (as $\SM^{\preceq A}$ is the sheaf theoretic stalk of $\SM$ at $A$). 
\begin{definition}
A presheaf $\SM$ on $\CX$ is said to be {\em finitely supported} if  $\SM_{[A]}=0$ for all but finitely many $A\in\CX$ and if the set $\{C\in\CX\mid\SM^{\preceq C}\ne 0\}$ is locally bounded from below.
\end{definition}
Recall that a subset $\CT $ of $\CX$ is called {\em locally bounded from below}, if there are $A_1,\dots,A_n\in\CX$ such that $ \CT \subset\bigcup_{i=1,\dots,n}\{\succeq A_i\}$. 

Note that while it is convenient for us to state the above definitions in the context of presheaves, they really make more sense in the category of sheaves. 
The following is  easy to prove.
\begin{remarks}\label{rem-subquot}
\begin{enumerate}
\item  Suppose that  $\SM$ is a flabby sheaf on $\CX$. Let $\CJ^\prime\subset\CJ\subset\CX$ be open sets and set $\CK=\CJ\setminus\CJ^\prime$. Then the restriction morphism $\SM^{\CJ}\to\SM^{\CK^-}$ induces an isomorphism 
$$
\ker(\SM^{\CJ}\to \SM^{\CJ^\prime})\cong \SM_{[\CK]}.
$$
\item 
Let $\SM$ be a finitely supported flabby  sheaf on $\CX$ and let $\CK$ be a locally closed subset of $\CX$. Let  $\CK\cap\supp_{\preceq}\SM=\{A_1,\dots,A_n\}$ be an enumeration such that $A_j\preceq  A_i$ implies $j\le i$.  Then there is a cofiltration  $\SM_{[\CK]}=\SM_0\supset \SM_1\supset\dots\supset \SM_{n+1}=0$  in $\bA$ with $\SM_{i-1}/\SM_{i}\cong\SM_{[A_i]}$ .
\end{enumerate}
\end{remarks}

\begin{lemma}\label{lemma-fin} Let $\SM$ be a flabby, finitely supported sheaf. Let $\CJ^\prime\subset\CJ$ be  open subsets of $\CX$ with $\CJ^\prime\cap\supp_{\preceq}\SM=\CJ\cap\supp_{\preceq}\SM$. Then the restriction morphism $\SM^{\CJ}\to\SM^{\CJ^\prime}$ is an isomorphism.
\end{lemma}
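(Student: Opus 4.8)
The plan is to reduce the statement to the basic case of removing a single point and then iterate. First I would write $\CK = \CJ \setminus \CJ^\prime$, which is a locally closed subset of $\CX$ (indeed, it is the difference of two open sets, so $\CK^- \subseteq \CJ$ and $\CK^- \setminus \CK$ is open). By the hypothesis $\CJ^\prime \cap \supp_\preceq \SM = \CJ \cap \supp_\preceq \SM$, we have $\CK \cap \supp_\preceq \SM = \emptyset$. Using Remark \ref{rem-subquot}(1), the kernel of $\SM^\CJ \to \SM^{\CJ^\prime}$ is isomorphic to $\SM_{[\CK]}$, so it suffices to show $\SM_{[\CK]} = 0$ (and that the restriction map is surjective, which holds since $\SM$ is flabby). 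Then I would invoke Remark \ref{rem-subquot}(2): since $\SM$ is finitely supported and flabby, $\SM_{[\CK]}$ admits a cofiltration whose subquotients are the $\SM_{[A_i]}$ for $A_i \in \CK \cap \supp_\preceq \SM$. But this intersection is empty, so the cofiltration is trivial and $\SM_{[\CK]} = 0$.

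More carefully, to apply Remark \ref{rem-subquot}(2) I need $\CK$ to be locally closed, which it is, and I need the enumeration of $\CK \cap \supp_\preceq \SM$ to be finite — this is guaranteed by the finite-support hypothesis. With the empty intersection, the cofiltration reads $\SM_{[\CK]} = \SM_0 = \SM_1 = \dots = 0$, giving $\SM_{[\CK]} = 0$ directly. Combined with flabbiness of $\SM$ (so $\SM^\CJ \to \SM^{\CJ^\prime}$ is surjective) and the identification of its kernel with $\SM_{[\CK]} = 0$ (so it is injective), the restriction morphism is an isomorphism.

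Alternatively, if one prefers not to lean on Remark \ref{rem-subquot}, one can argue by transfinite/finite induction on the size of $\CJ \setminus \CJ^\prime$ within a suitable bounded region: using that $\SM^{\preceq A} \cong \SM^{\prec A}$ whenever $A \notin \supp_\preceq \SM$ (because $\SM_{[A]} = 0$ means exactly that the restriction $\SM^{\preceq A} \to \SM^{\prec A}$ is injective, and flabbiness makes it surjective), one removes maximal elements of $\CJ \setminus \CJ^\prime$ one at a time. At each stage, passing from an open set $\CJ_1$ to $\CJ_1 \setminus \{A\}$ with $A$ maximal in $\CJ_1 \setminus \CJ^\prime$ and $A \notin \supp_\preceq \SM$, the sheaf axiom (using the description of a sheaf by the data $\SM^{\preceq B}$) identifies $\SM^{\CJ_1}$ with $\SM^{\CJ_1 \setminus \{A\}}$. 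The local-boundedness-from-below part of finite support is what guarantees this process terminates. I would present the first, cleaner argument in the paper.

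The main obstacle is essentially bookkeeping rather than a genuine difficulty: one must be sure that $\CK = \CJ \setminus \CJ^\prime$ is genuinely locally closed (so that $\SM_{[\CK]}$ is defined and Remark \ref{rem-subquot} applies) and that the finite-support hypothesis is used in both its guises — finiteness of $\supp_\preceq \SM$ to make the cofiltration finite, and local boundedness from below only if one takes the inductive route. Since both ingredients are already packaged in Remarks \ref{rem-subquot}(1) and (2), the proof is short; the only thing to watch is citing them with the correct open/locally-closed sets.
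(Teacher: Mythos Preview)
Your proposal is correct and follows essentially the same approach as the paper: use flabbiness for surjectivity, identify the kernel with $\SM_{[\CJ\setminus\CJ^\prime]}$ via Remark~\ref{rem-subquot}(1), and then use Remark~\ref{rem-subquot}(2) to see that this vanishes because $(\CJ\setminus\CJ^\prime)\cap\supp_\preceq\SM=\emptyset$. The paper's proof is simply a terser version of your first argument.
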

\begin{proof} As $\SM$ is flabby, the restriction morphism $\SM^{\CJ}\to\SM^{\CJ^\prime}$ is surjective. By  Remark \ref{rem-subquot},  its kernel is $\SM_{[\CJ\setminus\CJ^\prime]}$ and this  is an extension of the objects $\SM_{[A]}$ with $A\in\CJ\setminus\CJ^\prime$. But all those objects are trivial as $(\CJ\setminus\CJ^\prime)\cap \supp_{\preceq}\SM=\emptyset$.  Hence $\SM_{[\CJ\setminus\CJ^\prime]}=\{0\}$. 
\end{proof}

\subsection{Exact sequences}
The category of sheaves on $\CX$ with values in $\bA$ is abelian, hence we have a notion of short exact sequences.
\begin{lemma} \label{lemma-ses}
Let $0\to\SA\to\SB\to\SC\to 0$ be a sequence of finitely supported flabby sheaves on $\CX$. Then the following statements are equivalent:
\begin{enumerate}
\item The sequence is exact.
\item For any open subset $\CJ$ of $\CX$, the sequence
$$
0\to\SA^\CJ\to\SB^\CJ\to\SC^\CJ\to 0
$$
is exact in $\bA$.
\item For any $A\in\CX$, the sequence 
$$
0\to\SA^{\preceq A}\to\SB^{\preceq A}\to\SC^{\preceq A}\to 0
$$
is exact in $\bA$.
\item For any $A\in\CX$, the sequence
$$
0\to\SA_{[A]}\to\SB_{[A]}\to\SC_{[A]}\to 0
$$
is exact in $\bA$.
\end{enumerate}
\end{lemma}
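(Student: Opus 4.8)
The plan is to run the cycle of implications $(2)\Rightarrow(3)\Rightarrow(1)\Rightarrow(2)$ together with $(2)\Leftrightarrow(4)$. Recall first that a sheaf on $\CX$ amounts to the data of the objects $\SM^{\preceq A}$ ($A\in\CX$) together with compatible restriction maps, so the category of sheaves on $\CX$ is a functor category; consequently a sequence of sheaves is exact precisely when each sequence $0\to\SA^{\preceq A}\to\SB^{\preceq A}\to\SC^{\preceq A}\to0$ is exact, which is exactly the equivalence $(1)\Leftrightarrow(3)$. Since each $\{\preceq A\}$ is open, $(2)\Rightarrow(3)$ is trivial.

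For $(2)\Rightarrow(4)$ I would use a diagram chase. Fix $A$. For each $\SM\in\{\SA,\SB,\SC\}$ the sequence $0\to\SM_{[A]}\to\SM^{\preceq A}\to\SM^{\prec A}\to0$ is short exact: exactness on the left is the definition of $\SM_{[A]}$, and surjectivity on the right follows from flabbiness of $\SM$ and the fact that $\{\prec A\}\subseteq\{\preceq A\}$ are open. By (2) the columns through $\SM^{\preceq A}$ and through $\SM^{\prec A}$ are short exact; so the $3\times3$-lemma forces the column $0\to\SA_{[A]}\to\SB_{[A]}\to\SC_{[A]}\to0$ to be short exact, which is (4).

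The two substantial implications, $(1)\Rightarrow(2)$ and $(4)\Rightarrow(2)$, I would prove by the same one-point peeling argument, by strong induction on $|\CJ\cap\CS|$, where $\CJ$ is the given open set and $\CS:=\supp_\preceq\SA\cup\supp_\preceq\SB\cup\supp_\preceq\SC$ is finite. If $\CJ\cap\CS=\emptyset$ then $\SM^\CJ=0$ for all three sheaves by Lemma~\ref{lemma-fin}, and the sequence is trivially exact. Otherwise, Lemma~\ref{lemma-fin} lets me replace $\CJ$ by $(\CJ\cap\CS)^{-}$ without changing any $\SM^\CJ$; then any $B$ that is $\preceq$-maximal in $\CJ\cap\CS$ is $\preceq$-maximal in $\CJ$, so $\CJ\setminus\{B\}$ is open, and Remark~\ref{rem-subquot}(1) together with flabbiness gives short exact sequences $0\to\SM_{[B]}\to\SM^\CJ\to\SM^{\CJ\setminus\{B\}}\to0$, natural in $\SM$. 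The column through $\SM^{\CJ\setminus\{B\}}$ is short exact by the induction hypothesis (the $\CS$-count has dropped by one), so by the $3\times3$-lemma it remains to see that $0\to\SA_{[B]}\to\SB_{[B]}\to\SC_{[B]}\to0$ is short exact. Under hypothesis (4) this is immediate. Under hypothesis (1) I note that this sequence sits, for each $\SM$, in the short exact rows $0\to\SM_{[B]}\to\SM^{\preceq B}\to\SM^{\prec B}\to0$, whose column through $\SM^{\preceq B}$ is the sequence over the open set $\{\preceq B\}$ — short exact by (3), equivalently (1) — and whose column through $\SM^{\prec B}$ is short exact by the induction hypothesis, since $|\{\prec B\}\cap\CS|<|\CJ\cap\CS|$; one more application of the $3\times3$-lemma concludes. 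Since $(1)\Rightarrow(2)$, $(2)\Rightarrow(4)$, $(4)\Rightarrow(2)$, $(2)\Rightarrow(3)$ and $(3)\Rightarrow(1)$ have all been established, the four statements are equivalent.

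The step I expect to be the main obstacle is the bookkeeping in this last argument: one must check that each reduction via Lemma~\ref{lemma-fin} is compatible with the morphisms $\SA\to\SB\to\SC$, that the short exact sequences produced by Remark~\ref{rem-subquot}(1) are natural enough for the $3\times3$-lemma to apply to the full nine-term diagram, and that the nested use of the induction hypothesis in $(1)\Rightarrow(2)$ is well-founded (it is, as $|U\cap\CS|$ strictly decreases at each recursive call). If one prefers not to prove $(1)\Rightarrow(2)$ by hand, it can instead be quoted as the classical acyclicity of flabby sheaves — an exact sequence $0\to\SA\to\SB\to\SC\to0$ with $\SA$ flabby stays exact after taking sections over any open set. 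The remaining verifications are routine.
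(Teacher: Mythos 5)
Your proof is correct and follows essentially the same route as the paper's: $(1)\Leftrightarrow(3)$ via stalks (the minimal open neighborhood $\{\preceq A\}$), $(2)\Rightarrow(3)$ trivially, $(2)\Rightarrow(4)$ by the snake/nine lemma using flabbiness, and $(1)\Rightarrow(2)$, $(4)\Rightarrow(2)$ by exhibiting each $\SM^{\CJ}$ as a finite extension of the subquotients $\SM_{[A]}$ (Remark~\ref{rem-subquot}). You merely spell out explicitly, as an induction on $|\CJ\cap\CS|$ peeling off one $\preceq$-maximal element at a time, what the paper compresses into ``standard arguments'' for $(1)\Rightarrow(2)$ and a citation of Remark~\ref{rem-subquot} for $(4)\Rightarrow(2)$.
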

\begin{proof}  Recall that a sequence of sheaves is exact if and only if the induced sequences on all stalks are exact. Hence (1) and (3) are equivalent. Moreover, (3) is a special case of (2).  And (1) implies (2) by standard arguments (using the flabbiness of $\SA$). Hence (1), (2) and (3) are equivalent. The snake lemma proves that (2) implies (4). For a finitely supported sheaf $\SM$, the object $\SM^{\CJ}$ is a (finite) extension of the subquotients $\SM_{[A]}$ by Remark \ref{rem-subquot}, hence (4) implies (2). 
\end{proof}

\section{Sheaves on $(\CA,\preceq)$} \label{sec-sheavesA} 

Consider the set $\CA$ of alcoves as a topological space with the topology induced by the generic Bruhat order $\preceq$.  In the following we consider sheaves on the topological space $\CA$ with values in the abelian category $\CZ\catmod$ of $\CZ$-modules. The sheaves that we are most interested in, however, satisfy some conditions, the most important of those are the {\em support condition} and the {\em local extension condition}.

\subsection{The support condition}
Note that we defined two notions of support. One is the $\CZ$-support of an object in $\CZ\catmod^f$. This is a subset of the set $\CA^\emptyset$ of $\DZ R$-orbits in $\CA$. The other the $\preceq$-support of a sheaf of $\CZ$-modules on $\CA$, and this is a subset of $\CA$. The following definition relates these two notions.
Let $\SM$ be a sheaf  on $\CA$ with values in $\CZ\catmod^f$. Denote by $\pi\colon\CA\to\CA^{\emptyset}$ the orbit map.

\begin{definition} We say that $\SM$ {\em satisfies the support condition} if for any locally closed subset $\CK$ of $\CA$ the $\CZ$-module  $\SM_{[\CK]}$ is $\CZ$-supported inside $\pi(\CK)$. 
\end{definition}

\begin{remark}\label{rem-suppcond} Remark \ref{rem-subquot}, (2), implies the following. If $\SM$ is a finitely supported flabby sheaf, then it satisfies the support condition if and only if for any $A\in\CA$  the element  $z=(z_B)\in\CZ$ acts on $\SM_{[A]}$ via multiplication by $z_A$.
\end{remark}

\subsection{The localization of sheaves}
Let $\alpha\in R^+$ and let $\SM$ be a presheaf of $\CZ$-modules on $\CA$. Define a presheaf $\SM^\alpha$ of $\CZ^\alpha$-modules on $\CA$ by simply setting $(\SM^\alpha)^\CJ:=\SM^{\CJ}\otimes_S S^\alpha$ and $(r^{\SM^{\alpha}})_{\CJ}^{\CJ^\prime}=(r^\SM)_{\CJ}^{\CJ^\prime}\otimes 1$ for an inclusion $\CJ^\prime\subset\CJ$ of open subsets. We define the presheaf $\SM^{\emptyset}$ of $\CZ^{\emptyset}$-modules analogously. 

\begin{lemma} \label{lemma-locsheaf} Let $\SM$ be a presheaf on $\CA$ with values in $\CZ\catmod^f$.
\begin{enumerate}
\item Suppose that $\SM$ is a finitely supported sheaf. Then $\SM^{\emptyset}$ and $\SM^{\alpha}$ for any $\alpha\in R^+$ are sheaves.
\item If $\SM^{\alpha}$ is a sheaf for each $\alpha\in R^+$, then $\SM$ is a sheaf. 
\end{enumerate} 
\end{lemma}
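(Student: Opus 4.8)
The plan is to prove both statements by reducing sheaf conditions to a statement about restriction morphisms, using the finest covering of any open set by principal open sets $\{\preceq A\}$, and then exploiting the fact that localization at a multiplicatively closed subset of $S$ is exact and commutes with finite limits.

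For part (1), recall that $\SM$ is a sheaf if and only if $t_\CJ\colon\SM^\CJ\to\SM^{\{\CJ\}}$ is an isomorphism for every open $\CJ$, where $\SM^{\{\CJ\}}$ is the equalizer (a limit in $\CZ\catmod$) of the two maps $\prod_{A\in\CJ}\SM^{\preceq A}\rightrightarrows\prod_{A,B}\SM^{\preceq C}$ described in the sheafification subsection. Since $\SM$ is finitely supported, by Lemma~\ref{lemma-fin} we may replace $\CJ$ by any open subset with the same $\preceq$-support, and in particular we may assume $\CJ$ is finite, and all the products occurring in the definition of $\SM^{\{\CJ\}}$ are finite. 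Now $\SM^\alpha$ is obtained by applying the exact functor $-\otimes_S S^\alpha$ objectwise; since $S^\alpha$ is a localization of $S$, this functor is exact and commutes with finite products and with kernels, hence with finite limits. Therefore $(\SM^\alpha)^{\{\CJ\}}\cong\SM^{\{\CJ\}}\otimes_S S^\alpha$ and the map $t_\CJ^{\SM^\alpha}$ is identified with $t_\CJ^\SM\otimes\id$. As $t_\CJ^\SM$ is an isomorphism (because $\SM$ is a sheaf) and finitely supported guarantees all the relevant products are finite, $t_\CJ^{\SM^\alpha}$ is an isomorphism too, so $\SM^\alpha$ is a sheaf. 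The argument for $\SM^\emptyset$ is identical with $S^\emptyset$ in place of $S^\alpha$.

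For part (2), suppose each $\SM^\alpha$ is a sheaf; we must show $\SM$ is a sheaf, i.e. each $t_\CJ\colon\SM^\CJ\to\SM^{\{\CJ\}}$ is an isomorphism. The key point is that for an object $M$ of $\CZ\catmod^f$ one has $M=\bigcap_{\alpha\in R^+}M^\alpha$ inside $M^\emptyset$ by definition of $\CZ\catmod^f$, and $M\hookrightarrow M^\emptyset$ since $M$ is torsion free. One checks first that $t_\CJ$ is injective: an element of $\SM^\CJ$ restricting to zero on all $\{\preceq A\}$, $A\in\CJ$, restricts to zero after $-\otimes_S S^\alpha$ for every $\alpha$, hence is zero in $\SM^\CJ\otimes_S S^\alpha=(\SM^\alpha)^\CJ$ for each $\alpha$ by injectivity of $t^{\SM^\alpha}_\CJ$, hence is zero in $\bigcap_\alpha (\SM^\alpha)^\CJ=\SM^\CJ$ (using that $\SM^\CJ$ lies in $\CZ\catmod^f$). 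For surjectivity, take a compatible family $(m_A)_{A\in\CJ}\in\SM^{\{\CJ\}}$. After tensoring with $S^\alpha$ it defines an element of $(\SM^\alpha)^{\{\CJ\}}$, hence by the sheaf property of $\SM^\alpha$ comes from a unique $m^\alpha\in(\SM^\alpha)^\CJ=\SM^\CJ\otimes_S S^\alpha$; likewise over $S^\emptyset$ there is a unique $m^\emptyset$. Uniqueness forces the images of the $m^\alpha$ in $\SM^\CJ\otimes_S S^\emptyset$ to agree with $m^\emptyset$, so $m^\emptyset$ lies in $\bigcap_{\alpha}(\SM^\CJ\otimes_S S^\alpha)$ inside $\SM^\CJ\otimes_S S^\emptyset$, which equals $\SM^\CJ$ since $\SM^\CJ\in\CZ\catmod^f$. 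This produces the desired preimage $m\in\SM^\CJ$ of $(m_A)$, completing the argument.

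The main obstacle I expect is the finiteness bookkeeping in part (1): localization commutes with arbitrary products only up to a natural map, not an isomorphism, so one genuinely needs the finite support hypothesis (via Lemma~\ref{lemma-fin}) to cut $\CJ$ down to a finite set so that $\SM^{\{\CJ\}}$ is a \emph{finite} limit, which is what localization preserves. One should be slightly careful that Lemma~\ref{lemma-fin} requires $\SM$ flabby; if $\SM$ is only assumed to be a finitely supported sheaf one can instead argue directly that $t_\CJ^{\SM^\alpha}$ is built from the isomorphism $t_\CJ^\SM$ together with the fact that, since $\SM^{\{\CJ\}}$ is already a subobject of the product $\prod_{A\in\CJ}\SM^{\preceq A}$ cut out by equalizer conditions, one only needs $-\otimes_S S^\alpha$ to preserve that particular kernel, which it does since it is exact — and injectivity into the product is all that is needed, the product itself never has to be ``computed''. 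Part (2) is then formal, the only subtlety being the consistent use of $M=\bigcap_\alpha M^\alpha$ for objects of $\CZ\catmod^f$.
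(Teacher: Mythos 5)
Part (2) of your argument is essentially the paper's: you glue local sections separately in each $\SM^\alpha$, use that the results agree in $\SM^\emptyset$, and land back in $\SM^\CJ=\bigcap_\alpha(\SM^\alpha)^\CJ$ via the defining property of $\CZ\catmod^f$. That part is fine (and your injectivity step is even slightly redundant, since $\SM^\CJ\hookrightarrow\SM^\CJ\otimes_S S^\alpha$ already by torsion-freeness).

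There is a genuine gap in part (1). Your first route --- ``replace $\CJ$ by a finite open subset with the same $\preceq$-support'' --- cannot be executed: in $(\CA,\preceq)$ every alcove $A$ has a strictly smaller one (e.g.\ $s_{\alpha,n-1}(A)\prec A$ if $A\subset H_{\alpha,n-1}^+\cap H_{\alpha,n}^-$), so there are no minimal elements and no nonempty open subset of $\CA$ is finite. Your fallback argument --- exactness of $-\otimes_S S^\alpha$ preserves the kernel $\SM^{\{\CJ\}}\subset\prod_{A\in\CJ}\SM^{\preceq A}$ and ``injectivity into the product is all that is needed'' --- only produces an embedding $\SM^{\{\CJ\}}\otimes_S S^\alpha\hookrightarrow(\SM^\alpha)^{\{\CJ\}}$, i.e.\ injectivity of $t_\CJ^{\SM^\alpha}$. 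That shows $\SM^\alpha$ is a separated presheaf, not a sheaf; the missing surjectivity is precisely the infinite-product obstruction you correctly flagged, since a compatible family $(n_A)_{A\in\CJ}$ with $n_A\in(\SM^\alpha)^{\preceq A}$ need not admit a common denominator. The paper's resolution is different and does not pass through a finite \emph{open} set at all: one sets $\CT=\CJ\cap\supp_\preceq\SM$ (finite but generally not open) and first shows $\SM^\CJ\xrightarrow{\ \sim\ }\SM^{\{\CT\}}$. Since $\CT$ is finite this is a finite limit, so localization commutes with it, giving $(\SM^\alpha)^\CJ\xrightarrow{\ \sim\ }(\SM^\alpha)^{\{\CT\}}$; because $(\SM^\alpha)_{[B]}=\SM_{[B]}\otimes_S S^\alpha=0$ for every $B\in\CJ\setminus\CT$, the projection $(\SM^\alpha)^{\{\CJ\}}\to(\SM^\alpha)^{\{\CT\}}$ is injective, and this upgrades the injectivity of $t_\CJ^{\SM^\alpha}$ to an isomorphism. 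You need this extra step (the detour through $\SM^{\{\CT\}}$) to close the argument.
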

\begin{proof}  We prove part (1). So let  $\SM$ be a finitely supported sheaf. Let $\alpha\in R^+$. In order to show that $\SM^{\alpha}$ is a sheaf, we show that it coincides with its sheafification, i.e. that for any open subset  $\CJ$  the homomorphism $t_\CJ\colon(\SM^{\alpha})^\CJ\to(\SM^{\alpha})^{\{\CJ\}}$  is an isomorphism. First, observe that for any finite set $\CF$ the canonical homomorphism $(\SM^{\{\CF\}})^{\alpha}\to(\SM^{\alpha})^{\{\CF\}}$ is an isomorphism. Now fix $\CJ$ and let $\CT=\CJ\cap\supp_{\preceq}\SM$. By the definition of support and since $\SM$ is a sheaf, the homomorphism $\SM^{\CJ}\to\SM^{\{\CT\}}$, $m\mapsto (m|_{\preceq A})_{A\in \CT}$, is an isomorphism. Applying the functor $(\cdot)^{\alpha}$ hence yields an isomorphism $(\SM^{\CJ})^\alpha=(\SM^{\alpha})^\CJ\to (\SM^{\{\CT\}})^\alpha=(\SM^{\alpha})^{\{\CT\}}$. So each local section of $\SM^{\alpha}$ is uniquely determined by its restrictions to $\{\preceq A\}$ with $A$ in $\supp_{\preceq}\SM$, and the above isomorphism hence implies that $\SM^{\alpha}$ is a sheaf.
With analogous arguments  we prove that $\SM^{\emptyset}$ is a sheaf.

Now let us prove (2). First note that if $\SM^{\alpha}$ is a sheaf for some $\alpha$, then $\SM^{\emptyset}$ is a sheaf as well (by the arguments above). Suppose we are given local sections $m_i\in\SM^{\CJ_i}$ for $i\in I$ with $m_i|_{\CJ_i\cap\CJ_j}=m_j|_{\CJ_i\cap\CJ_j}$. These can be considered as sections in $(\SM^{\alpha})^{\CJ_i}$. So if $\SM^{\alpha}$ is a sheaf, there is a unique section $m^{\alpha}\in(\SM^{\alpha})^\CJ$ (with $\CJ=\bigcup_{i\in I}\CJ_i$) extending all $m_i$. But the sections $m^{\alpha}$ must coincide as sections in $(\SM^{\emptyset})^\CJ$, as $\SM^{\emptyset}$ is a sheaf. Hence this section is already contained in $\SM^{\CJ}=\bigcap_{\alpha\in R^+}(\SM^{\alpha})^\CJ$.
 \end{proof}

\subsection{The local extension condition}
Let $\SM$ be a sheaf on $\CA$ with values in $\CZ\catmod^f$. The results in Section \ref{subsec-candec} imply that there is a canonical decomposition $\SM^{\alpha}=\bigoplus_{\Gamma\in\CA^{\alpha}} \SM^{\alpha,\Gamma}$, where $\SM^{\alpha,\Gamma}$ is a sheaf of $\CZ(\Gamma)$-modules. For our category, we require a finer decomposition. Note that each $\hCW^{\alpha}$-orbit $\Gamma$ is a disjoint union of $\alpha$-strings. 

\begin{definition} We say that $\SM$ {\em satisfies the local extension condition} if for any $\alpha\in R^+$ the sheaf $\SM^\alpha$ splits into a direct sum $\SM^{\alpha}=\bigoplus_{i=1}^n \SM^{\alpha}_i$ in such a way that each $\SM^{\alpha}_i$ is $\preceq$-supported inside a single $\alpha$-string in $\CA$.
\end{definition}

\subsection{Categories of sheaves} Now we are ready to define the categories of sheaves that will play a decisive role in the following.

\begin{definition} Denote by $\bC$ the full subcategory of the category of sheaves on $\CA$ with values in $\CZ\catmod$ that contains all objects $\SM$ that satisfy the following:
\begin{enumerate}
\item for any open subset $\CJ$ of $\CA$, $\SM^{\CJ}$ is an object in $\CZ\catmod^f$,
\item $\SM$ is  flabby and  finitely supported,
\item $\SM$  satisfies the support condition and the local extension condition. 
\end{enumerate}
\end{definition}
\begin{remark} Let $\SM$ be an object in $\bC$. As we do not assume any finiteness condition on the $\CZ$-modules $\SM^\CJ$ for open $\CJ$, we can view the localized sheaf $\SM^{\alpha}$ for $\alpha\in R^+$ or $\SM^{\emptyset}$ as objects in $\bC$ as well. Note that by Remark \ref{rem-loc} an object $\SM$ can carry at most one structure as a sheaf of $\CZ^{\alpha}$- or $\CZ^{\emptyset}$-modules.
\end{remark}
Sheaves that admit a Verma flag are defined as follows: 
\begin{definition}
We say that an object $\SM$ of $\bC$ {\em admits a Verma flag} if for any open subset $\CJ$ the object $\SM^{\CJ}$ is graded free of finite rank as an $S$-module. We denote by $\bB$ the full subcategory of $\bC$ that contains all objects that admit a Verma flag.
\end{definition}

\begin{lemma}\label{lemma-Vermasubquot}  The object $\SM$ in $\bC$ admits a Verma flag if and only if  $\SM_{[A]}$ is a graded free $S$-module for any $A\in\CA$. 
\end{lemma}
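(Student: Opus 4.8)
The plan is to prove the two implications separately; the forward direction is almost immediate, and the work is in the converse.

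\emph{Forward direction.} Fix $A\in\CA$. The subsets $\{\preceq A\}$ and $\{\prec A\}$ of $\CA$ are open, and by definition $\SM_{[A]}=\ker\bigl(\SM^{\preceq A}\to\SM^{\prec A}\bigr)$. Since $\SM$ is flabby, this restriction morphism is surjective, so $0\to\SM_{[A]}\to\SM^{\preceq A}\to\SM^{\prec A}\to 0$ is a short exact sequence of graded $S$-modules. If $\SM$ admits a Verma flag then $\SM^{\prec A}$ is graded free of finite rank, hence graded projective, so the sequence splits and $\SM_{[A]}$ is a graded direct summand of the finitely generated graded free module $\SM^{\preceq A}$. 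As $S$ is a polynomial ring with $S_0=k$ a field, a finitely generated graded projective $S$-module is graded free (graded Nakayama), so $\SM_{[A]}$ is graded free of finite rank.

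\emph{Converse.} Assume $\SM_{[A]}$ is graded free for every $A\in\CA$, and fix an open subset $\CJ\subseteq\CA$. Being open, $\CJ$ is locally closed with $\CJ^-=\CJ$ and $\CJ^-\setminus\CJ=\emptyset$, and $\SM^\emptyset=0$ since $\SM$ is a sheaf; hence $\SM_{[\CJ]}=\SM^{\CJ}$. Because $\SM$ is finitely supported, $\CJ\cap\supp_{\preceq}\SM=\{A_1,\dots,A_n\}$ is finite, and we fix an enumeration with $A_j\preceq A_i\Rightarrow j\le i$. By Remark \ref{rem-subquot}, (2), there is a cofiltration $\SM^{\CJ}=\SM_0\supset\SM_1\supset\dots\supset\SM_{n+1}=0$ with $\SM_{i-1}/\SM_i\cong\SM_{[A_i]}$. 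I then argue by downward induction on $i$ that each $\SM_i$ is graded free of finite rank: $\SM_{n+1}=0$ trivially, and if $\SM_i$ is graded free then the short exact sequence $0\to\SM_i\to\SM_{i-1}\to\SM_{[A_i]}\to 0$ splits as graded $S$-modules (its cokernel $\SM_{[A_i]}$ being graded free, hence projective), so $\SM_{i-1}\cong\SM_i\oplus\SM_{[A_i]}$ is again graded free of finite rank. Taking $i=0$ shows $\SM^{\CJ}$ is graded free of finite rank, i.e. $\SM\in\bB$.

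The argument is formal once the tools are assembled, and I do not expect a serious obstacle. The two points to watch are: (i) the cofiltration of Remark \ref{rem-subquot}, (2), is finite --- this is exactly what the finite-supportedness built into $\bC$ guarantees, and it is also what yields finite rank, since only the finitely many stalks $\SM_{[A_i]}$ with $A_i\in\CJ\cap\supp_{\preceq}\SM$ contribute; and (ii) every extension occurring in the cofiltration splits as graded $S$-modules, because its cokernel is graded free. If anything is delicate it is the bookkeeping in the converse: identifying $\SM^{\CJ}$ with $\SM_{[\CJ]}$ and invoking the cofiltration with a compatible enumeration so that the successive quotients are precisely the $\SM_{[A_i]}$.
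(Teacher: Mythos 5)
Your proof is correct and follows the same approach as the paper: both rely on Remark \ref{rem-subquot}, the flabbiness of $\SM$, and the fact that a short exact sequence of graded $S$-modules with graded free cokernel splits. The paper states both directions tersely as a single ``if and only if,'' whereas you helpfully make the forward direction explicit via the short exact sequence $0\to\SM_{[A]}\to\SM^{\preceq A}\to\SM^{\prec A}\to 0$ (which is really the base case of the cofiltration), so there is no substantive difference in method.
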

\begin{proof} By Remark \ref{rem-subquot} each local section of   $\SM$ is an extension of the subquotients $\SM_{[A]}$ with $A\in\CJ$. Hence local sections are graded free as $S$-modules if and only if all subquotients are graded free as $S$-modules. \end{proof}

\section{Sheaves associated with the structure algebra}\label{sec-pfspec}
Let $\CK\subset\CA$ be a locally closed subset. Define a sheaf $\ul\CK$ of $\CZ$-modules on $\CA$ in the following way. For any open subset $\CJ$ of $\CA$ set 
$$
\ul{\CK}^{\CJ}:=\CZ(\CK\cap\CJ),
$$
 and use the canonical homomorphism $\CZ(\CK\cap\CJ)\to\CZ(\CK\cap\CJ^\prime)$ for two open subsets $\CJ^\prime\subset\CJ$ as the restriction homomorphism. This defines  a presheaf $\ul{\CK}$ of $\CZ$-modules on $\CA$. From the locality of the definition one immediately deduces that $\ul{\CK}$ is a sheaf. For $\CK^\prime\subset\CK$ the canonical homorphisms $\CZ(\CK\cap\CJ)\to\CZ(\CK^\prime\cap\CJ)$ combine and yield a homomorphism $\ul{\CK}\to\ul{\CK^\prime}$ of sheaves. 
\subsection{Standard objects in $\bC$}
In the special case $\CK=\{C\}$ we set $\SV(C):=\ul{\{C\}}$. For any open subset $\CJ$ we hence have
$$
\SV(C)^{\CJ}:=\begin{cases}
S,&\text{ if $C\in \CJ$},\\
0,&\text{ else}
\end{cases}
$$
as a graded $S$-module, and the $\CZ$-module structure is such that $z=(z_A)$ acts as multiplication with $z_C\in S$. The restriction maps are the obvious (non-trivial) ones. It is easy to check that it satisfies the properties required for objects in $\bC$. Clearly it admits a Verma flag, hence is contained in $\bB$.

\subsection{Sheaves associated with special sections}
Fix $\lambda\in X$ and recall the special section $\CK_\lambda$. It is a locally closed subset of $\CA$.
\begin{proposition}\label{prop-ZKinC}  
\begin{enumerate}
\item The sheaf $\ul{\CK_\lambda}$ is an object in $\bB$.
\item For $A\in\CA$ we have
$$
\ul{\CK_\lambda}_{[A]}\cong
\begin{cases}
0,&\text{ if $A\not\in\CK_\lambda$},\\
S[-2l_A],&\text{ if $A\in\CK_\lambda$},
\end{cases}
$$
where $l_A$ is the number of $\alpha\in R^+$ with $s_{\alpha,\langle\lambda,\alpha^\vee\rangle}(A)\preceq A$.
\item $\ul{\CK_\lambda}$ admits a Verma flag, is indecomposable and admits an epimorphism onto $\SV(A_\lambda^-)$. 
\end{enumerate}
\end{proposition}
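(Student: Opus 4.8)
The plan is to verify the three defining properties of $\bC$ and $\bB$ in turn, making heavy use of Proposition \ref{prop-Zflab} and of the combinatorial description of $\CK_\lambda$ via the finite Coxeter system $(\CW_\lambda,\CS_\lambda)$ from Lemma \ref{lemma-specset1}.

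\textbf{Flabbiness and finiteness.} For flabbiness of $\ul{\CK_\lambda}$ I would argue that for open $\CJ'\subset\CJ$ the restriction $\ul{\CK_\lambda}^{\CJ}=\CZ(\CK_\lambda\cap\CJ)\to\CZ(\CK_\lambda\cap\CJ')=\ul{\CK_\lambda}^{\CJ'}$ is surjective: since $\CK_\lambda\cap\CJ$ and $\CK_\lambda\cap\CJ'$ are open subsets of $\CK_\lambda$ (the latter contained in the former), and since any $(z_A)\in\CZ(\CK_\lambda\cap\CJ')$ extends first (arbitrarily, respecting the GKM congruences, which is possible because $\CK_\lambda\cap\CJ$ is open and we can fill in values from $\preceq$-maximal elements downward, or more cleanly by invoking Proposition \ref{prop-Zflab} twice via $\CZ$) to an element of $\CZ(\CK_\lambda\cap\CJ)$. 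Concretely: $\CZ\to\CZ(\CK_\lambda\cap\CJ)$ and $\CZ\to\CZ(\CK_\lambda\cap\CJ')$ are both surjective by Proposition \ref{prop-Zflab}, and the second factors through the first, so the restriction between them is surjective. Finite supportedness follows since $\CK_\lambda$ is finite (it is a $\CW_\lambda$-orbit, $\CW_\lambda$ finite) and hence $\{C\mid\ul{\CK_\lambda}^{\preceq C}\ne 0\}\subset\CK_\lambda^-$ is locally bounded below; and $\ul{\CK_\lambda}_{[A]}$ will turn out to be $0$ outside the finite set $\CK_\lambda$.

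\textbf{The stalk computation (item (2)) and the support/extension conditions.} For a singleton $\{A\}$, by Remark \ref{rem-subquot}, $\ul{\CK_\lambda}_{[A]}=\ker\big(\CZ(\CK_\lambda\cap\{\preceq A\})\to\CZ(\CK_\lambda\cap\{\prec A\})\big)$. If $A\notin\CK_\lambda$ this kernel is $0$. If $A\in\CK_\lambda$, an element of the kernel is a tuple supported only at $A$, i.e.\ $z_A\in S$ with $z_A\equiv 0\bmod\alpha^\vee$ for every $\alpha\in R^+$ such that $s_{\alpha,\langle\lambda,\alpha^\vee\rangle}(A)\in\CK_\lambda\cap\{\prec A\}$ — but since $\CK_\lambda$ is $\CW_\lambda$-stable, $s_{\alpha,\langle\lambda,\alpha^\vee\rangle}(A)$ is always in $\CK_\lambda$, and it lies in $\{\prec A\}$ precisely when $s_{\alpha,\langle\lambda,\alpha^\vee\rangle}(A)\preceq A$, $s_{\alpha,\langle\lambda,\alpha^\vee\rangle}(A)\ne A$. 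By the GKM-condition the relevant $\alpha^\vee$ are pairwise non-proportional, so the ideal they generate is generated by their product, giving $\ul{\CK_\lambda}_{[A]}\cong S[-2l_A]$ with $l_A=\#\{\alpha\in R^+\mid s_{\alpha,\langle\lambda,\alpha^\vee\rangle}(A)\preceq A\}$ (the non-strict inequality being harmless since the reflection fixing $A$ contributes a trivial congruence). This simultaneously shows (via Lemma \ref{lemma-Vermasubquot}) that $\ul{\CK_\lambda}\in\bB$, which is item (1). The support condition holds because $z=(z_B)\in\CZ$ acts on $\ul{\CK_\lambda}_{[A]}$ by $z_A$ (Remark \ref{rem-suppcond}). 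For the local extension condition I would compute $\ul{\CK_\lambda}^\alpha$: localizing, $\CZ^\alpha(\CK_\lambda\cap\CJ)$ decomposes along $\hCW^\alpha$-orbits intersected with $\CK_\lambda$, and within $\CK_\lambda$ such an intersection with a single $\alpha$-string has at most two alcoves $\{A, s_{\alpha,\langle\lambda,\alpha^\vee\rangle}(A)\}$; the congruence mod $\alpha^\vee$ between the two coordinates is exactly what survives localization, so the summands are $\preceq$-supported in single $\alpha$-strings.

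\textbf{Indecomposability and the epimorphism (item (3)).} The Verma flag is already established. For the epimorphism onto $\SV(A_\lambda^-)$: $A_\lambda^-$ is the $\preceq$-maximal element of $\CK_\lambda$ (Lemma \ref{lemma-specset1}(1) identifies $\preceq$ on $\CK_\lambda$ with Bruhat order on $\CW_\lambda$, and the longest element maps to $A_\lambda^-$ as it is the unique alcove in $C^-+\lambda$ containing $\lambda$... here $A_\lambda^-$ is maximal because it is the image of the identity under $\tau_\lambda$ and the order is generated by $A\preceq s_{\alpha,n}A$ moving toward $C^-+\lambda$; I will state it as: $\{\preceq A_\lambda^-\}\cap\CK_\lambda=\CK_\lambda$). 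Hence $\ul{\CK_\lambda}^{\preceq A_\lambda^-}=\CZ(\CK_\lambda)$, and projection to the $A_\lambda^-$-coordinate $\CZ(\CK_\lambda)\to S$ is a surjective $\CZ$-module map inducing a sheaf morphism $\ul{\CK_\lambda}\to\SV(A_\lambda^-)$ which is surjective on all stalks (trivially, since $\SV(A_\lambda^-)$ has a single nonzero stalk $S$ at $A_\lambda^-$, hit by $z_{A_\lambda^-}$), hence an epimorphism by Lemma \ref{lemma-ses}. For indecomposability I would show $\End_{\bC}(\ul{\CK_\lambda})$ is local — in fact I expect $\End_{\bC}(\ul{\CK_\lambda})\cong S$ acting by scalars. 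A sheaf endomorphism $f$ is determined by $f^{\preceq A_\lambda^-}\colon\CZ(\CK_\lambda)\to\CZ(\CK_\lambda)$, a graded $\CZ(\CK_\lambda)$-module endomorphism; since $\CZ(\CK_\lambda)$ is generated as a $\CZ(\CK_\lambda)$-module by $1$ (it is unital!), such $f$ is multiplication by $f(1)\in\CZ(\CK_\lambda)_0=S_0=k$. Thus $\End=k$ (or $S$ if one does not restrict to degree $0$ morphisms; with the grading convention here, degree-preserving endomorphisms form $k$), which is local, so $\ul{\CK_\lambda}$ is indecomposable.

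\textbf{Main obstacle.} The delicate point is getting the stalk computation exactly right — specifically handling the distinction between $s_{\alpha,\langle\lambda,\alpha^\vee\rangle}(A)\preceq A$ (non-strict, as in the statement) versus strictly below $A$, making sure the reflections fixing $A$ contribute nothing, and then correctly invoking the GKM-condition to pass from "$z_A$ divisible by each $\alpha^\vee$" to "$z_A$ divisible by the product". Establishing that $\CZ\to\CZ(\CK_\lambda\cap\CJ)$ is surjective for \emph{all} open $\CJ\subset\CA$ — not just $\CJ\subset\CK_\lambda$ — also needs a small argument: $\CK_\lambda\cap\CJ$ is open in the subspace $\CK_\lambda$, so this reduces to Proposition \ref{prop-Zflab} applied to that open subset of $\CK_\lambda$.
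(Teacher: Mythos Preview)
Your argument is essentially the paper's, and the stalk computation, flabbiness, support condition, local extension condition, and Verma-flag verification are all correct and match the paper's proof closely.

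There is, however, a genuine error in your treatment of item (3). You claim that $A_\lambda^-$ is the $\preceq$-\emph{maximal} element of $\CK_\lambda$, and you even write $\{\preceq A_\lambda^-\}\cap\CK_\lambda=\CK_\lambda$. This is false: by Lemma~\ref{lemma-specset1}(1) the map $\tau_\lambda$ sends the identity $e\in\CW_\lambda$ to $A_\lambda^-$, and since $e$ is Bruhat-\emph{minimal}, $A_\lambda^-$ is the $\preceq$-\emph{minimal} element of $\CK_\lambda$. (You seem to have noticed the tension yourself, since you write both ``the longest element maps to $A_\lambda^-$'' and ``$A_\lambda^-$ is the image of the identity'' in the same parenthesis; only the second is correct.) The confusion may stem from the fact that $A_\lambda^-$ is the $\preceq$-maximal alcove of the \emph{box} $\Pi_\lambda$, which is a different set.

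This error breaks your indecomposability argument: you want an endomorphism $f$ to be determined by $f^{\preceq A_\lambda^-}$, which would require $\CK_\lambda\subset\{\preceq A_\lambda^-\}$. In fact $\ul{\CK_\lambda}^{\preceq A_\lambda^-}=\CZ(\{A_\lambda^-\})=S$, and the map $f^{\preceq A_\lambda^-}\colon S\to S$ does not determine $f$. The fix is immediate: run your argument on global sections instead. By Lemma~\ref{lemma-isofin}, $\ul{\CK_\lambda}^{\CA}=\CZ(\CK_\lambda)\cong\CZ$ is free of rank one over $\CZ$; flabbiness then shows $f$ is determined by $f^{\CA}$, which must be multiplication by a degree-$0$ element of $\CZ$, i.e.\ a scalar. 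This is exactly the paper's argument (phrased there as ``$\CZ$ is indecomposable, hence so is the sheaf''). Your construction of the epimorphism onto $\SV(A_\lambda^-)$ via projection to the $A_\lambda^-$-coordinate is fine and does not actually depend on maximality; the paper simply observes that $A_\lambda^-$ is $\preceq$-minimal in the support, which makes the existence of such an epimorphism immediate.
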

\begin{proof}  It follows from Proposition \ref{prop-Zflab} that $\ul{\CK_\lambda}$ is a flabby sheaf. 
Let $A\in\CA$. If $A\not\in\CK_\lambda$, then $\{\preceq A\}\cap\CK_\lambda=\{\prec A\}\cap\CK_\lambda$ and hence $\ul{\CK_\lambda}_{[A]}=0$. If $A\in\CK_\lambda$, then the kernel $\ul{\CK_\lambda}^{\preceq A}\to \ul{\CK_\lambda}^{\prec A}$ is the set of all $r=(r_C)_{C\in\CK\cap\{\preceq A\}}$ with $r_C=0$ unless $C=A$. Hence $\ul{\CK_\lambda}_{[A]}=(\prod \alpha)S$, where the product is taken over all $\alpha\in R^+$ such that there is some $n\in\DZ$ with $s_{\alpha,n}(C)\in\CK_\lambda\cap\{\preceq A\}$. Now $s_{\alpha,n}(C)\in\CK_\lambda\cap\{\preceq A\}$ is equivalent to $n=\langle\lambda,\alpha^\vee\rangle$ and $s_{\alpha,n}(A)\preceq A$. Hence  $\ul{\CK_\lambda}_{[A]}\cong S[-2l_A]$, so (2). It follows furthermore that $\ul{\CK_\lambda}$ is finitely supported. The definition of the $\CZ$-module structure implies directly that $\ul{\CK_\lambda}$ satisfies the support condition. 

Let $\alpha\in R^+$. As the image of $\beta^\vee$ is invertible in $S^{\alpha}$ for all  $\beta\in R^+$, $\beta\ne\alpha$, we have
$$
(\ul{\CK_\lambda}^\CJ)^\alpha=\left\{(r_A)\in\bigoplus_{A\in\CK\cap\CJ} S^\alpha\left| \begin{matrix} r_A\equiv r_{s_{\alpha,n}(A)}\mod\alpha^\vee\text{ for all $A\in\CK\cap\CJ$} \\ \text{ $n\in\DZ$ with $s_{\alpha,n}(A)\in\CK\cap\CJ$}\end{matrix}\right\}\right..
$$
As the relations are hence only between components indexed by elements in the same $\alpha$-string, it follows that $\ul{\CK_\lambda}$ satisfies the local extension condition as well. Hence (1). 

Finally, it is immediate from the definition that each local section of $\ul{\CK_\lambda}$ is an object in $\CZ\catmod^f$. Together with the above it follows that $\ul{\CK_\lambda}$ is contained in $\bC$. Lemma \ref{lemma-Vermasubquot} together with the above implies that $\ul{\CK_\lambda}$ admits a Verma flag.  The global sections of $\ul{\CK_\lambda}$ are a free $\CZ$-module of rank one by Lemma \ref{lemma-isofin}. Since $\CZ$ is  indecomposable,  the sheaf $\ul{\CK_\lambda}$ is indecomposable as well. Finally, as $A_\lambda^-$ is a $\preceq$-minimal element in the support of $\ul{\CK_\lambda}$, there is an epimorphism $\ul{\CK_\lambda}\to\SV(A_\lambda^-)$.
\end{proof}

\section{Projective objects in $\bC$}\label{sec-projobj} 
The category $\bC$ is not abelian (as, for example, its objects are supposed to be torsion free, which prevents general quotients). However, it inherits an exact structure from the surrounding category of sheaves of $\CZ$-modules on $\CA$, and the equivalent statements for short exact sequences in Lemma \ref{lemma-ses} hold. Hence we have a notion of projective objects. 
 
 \subsection{A sufficient condition on projectivity}

\begin{proposition} \label{prop-projobj} Let $\SB$ be an object in $\bC$ with the following properties:
\begin{enumerate}
\item For each $x\in \CA^{\emptyset}$ and each $A\in x$, the stalk $(\SB^{\preceq A})^{x}$ is  projective  in the category of graded $S$-modules.
\item For each $\SM\in\bC$ and all $A\in\CA$ the following holds: Suppose that we are given for each $B\prec A$ a homomorphism $h^{\preceq B}\colon \SB^{\preceq B}\to \SM^{\preceq B}$ such that for all $C\prec B\prec A$ the diagram

\centerline{
\xymatrix{
 \SB^{\preceq B}\ar[d]
 \ar[r]^{h^{\preceq B}}&\SM^{\preceq B}\ar[d]
 \\
  \SB^{\preceq C}\ar[r]^{h^{\preceq C}}&\SM^{\preceq C}
 }
 }
 \noindent 
commutes. Then there is a homomorphism $h^{\preceq A}\colon\SB^{\preceq A}\to\SM^{\preceq A}$ of $\CZ$-modules such that the diagrams

\centerline{
\xymatrix{
 \SB^{\preceq A}\ar[d]
 \ar[r]^{h^{\preceq A}}&\SM^{\preceq A}\ar[d]
 \\
  \SB^{\preceq B}\ar[r]^{h^{\preceq B}}&\SM^{\preceq B}
 }
 }
 \noindent 
  commute for all $B\prec A$. 
 \end{enumerate}
Then $\SB$ is projective in $\bC$.
\end{proposition}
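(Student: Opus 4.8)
The plan is to verify the defining lifting property of a projective object directly. Let
$0\to\SM'\to\SM\xrightarrow{\ q\ }\SM''\to 0$
be a short exact sequence in $\bC$ — by Lemma~\ref{lemma-ses} it is then exact on every stalk $\{\preceq A\}$ and on every local section $\SM_{[A]}$ — and let $f\colon\SB\to\SM''$ be a morphism; I must produce $g\colon\SB\to\SM$ with $q\circ g=f$. Since a sheaf on $\CA$, and a morphism between such, is nothing but the coherent family of its stalks $\SM^{\preceq A}$ together with the restriction maps, it is enough to construct $\CZ$-linear homomorphisms $g^{\preceq A}\colon\SB^{\preceq A}\to\SM^{\preceq A}$, compatible with restrictions, with $q^{\preceq A}\circ g^{\preceq A}=f^{\preceq A}$ for all $A$. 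Since $\SB$ is finitely supported, $\{C\mid\SB^{\preceq C}\neq 0\}$ is locally bounded from below, so each $\{\preceq A\}$ meets it in a finite set; this makes the poset below any alcove well-founded for the relevant data, and we construct the $g^{\preceq A}$ by induction along $\preceq$, putting $g^{\preceq A}=0$ whenever $\SB^{\preceq A}=0$.

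For the inductive step fix $A$ and assume compatible maps $g^{\preceq B}$ with $q^{\preceq B}\circ g^{\preceq B}=f^{\preceq B}$ have been chosen for all $B\prec A$. Hypothesis~(2), applied to $\SM$ and this system, produces a provisional homomorphism $\tilde g^{\preceq A}\colon\SB^{\preceq A}\to\SM^{\preceq A}$ compatible with all $g^{\preceq B}$; its defect $d:=f^{\preceq A}-q^{\preceq A}\circ\tilde g^{\preceq A}$ need not vanish. But since $f$ and $q$ are sheaf morphisms and $\tilde g^{\preceq A}$ restricts to $g^{\preceq B}$ for $B\prec A$, a short diagram chase gives $r^{\preceq A}_{\preceq B}\circ d=0$ for every $B\prec A$; as $\{\prec A\}=\bigcup_{B\prec A}\{\preceq B\}$ and $\SM''$ is a sheaf, this forces $d$ to have image in $(\SM'')_{[A]}=\ker\bigl((\SM'')^{\preceq A}\to(\SM'')^{\prec A}\bigr)$. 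The support condition now enters: $(\SM'')_{[A]}$ and $\SM_{[A]}$ are $\CZ$-modules supported inside the single $\DZ R$-orbit $\pi(\{A\})=\{x\}$, so by Remark~\ref{rem-suppcond} the algebra $\CZ$ acts on both through the character $z=(z_B)\mapsto z_A\in S$. Being $\CZ$-linear with target supported at $x$, the map $d$ therefore factors through the stalk $(\SB^{\preceq A})^{x}$, where it is just a homomorphism of graded $S$-modules $\bar d\colon(\SB^{\preceq A})^{x}\to(\SM'')_{[A]}$.

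Hypothesis~(1) now says that $(\SB^{\preceq A})^{x}$ is a projective graded $S$-module, while $\SM_{[A]}\to(\SM'')_{[A]}$ is surjective — this is the exactness of $0\to\SM'_{[A]}\to\SM_{[A]}\to(\SM'')_{[A]}\to 0$ from Lemma~\ref{lemma-ses}. Hence $\bar d$ lifts to a graded $S$-linear map $(\SB^{\preceq A})^{x}\to\SM_{[A]}$, which is automatically $\CZ$-linear because all modules involved are supported at $x$. Composing with the canonical surjection $\SB^{\preceq A}\twoheadrightarrow(\SB^{\preceq A})^{x}$ and the inclusion $\SM_{[A]}\hookrightarrow\SM^{\preceq A}$ yields $e^{\preceq A}\colon\SB^{\preceq A}\to\SM^{\preceq A}$ with $q^{\preceq A}\circ e^{\preceq A}=d$ and with $r^{\preceq A}_{\preceq B}\circ e^{\preceq A}=0$ for all $B\prec A$, since the image of $e^{\preceq A}$ lies in $\SM_{[A]}$. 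Then $g^{\preceq A}:=\tilde g^{\preceq A}+e^{\preceq A}$ satisfies $q^{\preceq A}\circ g^{\preceq A}=f^{\preceq A}$ and remains compatible with all $g^{\preceq B}$, which closes the induction; assembling the $g^{\preceq A}$ produces the desired lift $g$, so $\SB$ is projective. The heart of the argument — and the only genuine obstacle — is the middle paragraph: one has to see that the obstruction to $\tilde g^{\preceq A}$ being a true lift is concentrated at the single alcove $A$, so that the support condition turns it into an ordinary graded $S$-module lifting problem that hypothesis~(1) solves, and then check that the correction $e^{\preceq A}$ is both $\CZ$-linear and invisible on $\{\prec A\}$, so that it does not spoil the part of $g$ already constructed. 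The well-foundedness of the induction and the diagram chases are routine.
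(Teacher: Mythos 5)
Your argument is correct and follows the same route as the paper: induction on $A$ using local boundedness, a provisional extension from hypothesis~(2), localizing the defect into the subquotient at $A$ via the support condition, factoring through the stalk $(\SB^{\preceq A})^x$, and lifting by hypothesis~(1) together with the stalkwise exactness from Lemma~\ref{lemma-ses}. You spell out a couple of points the paper leaves implicit (the diagram chase showing the defect lands in $(\SM'')_{[A]}$, the automatic $\CZ$-linearity of the $S$-linear lift, and why the correction does not disturb the already-constructed maps), but the substance is identical.
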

The verticals in the above diagrams are the restriction homomorphisms. 
\begin{proof} Let $f\colon \SM\sur \SN$ be an epimorphism in $\bC$ and let $g\colon \SB\to \SN$ be arbitrary morphism in $\bC$. We want to construct a morphism $h\colon \SB\to\SM$ with $g=f\circ h$. We need to construct homomorphisms $h^{\preceq A}\colon \SB^{\preceq A}\to \SM^{\preceq A}$ of $\CZ$-modules that are compatible with the restriction homomorphisms of $\SB$ and $\SM$. We do this  by induction on $A$. As the set $\{B\in\CA\mid \SB^{\preceq B}\ne 0\}$ is locally bounded from below, we can assume that we have already constructed $h^{\preceq B}$ for all $B\prec A$. By our assumption, there is a homomorphism $\tilde h^{\preceq A}\colon \SB^{\preceq A}\to \SM^{\preceq A}$ that extends all $h^{\preceq B}$. But note that we do not necessarily have $g^{\preceq A}=f^{\preceq A}\circ \tilde h^{\preceq A}$. However, the image of the difference $l:=g^{\preceq A}-f^{\preceq A}\circ \tilde h^{\preceq A}\colon\SB^{\preceq A}\to\SN^{\preceq A}$ is contained  in the kernel $\SN_{[A]}$ of the restriction $\SN^{\preceq A}\to\SN^{\prec A}$. In order to correct our choice, it is hence sufficient to find a homomorphism $h^\prime\colon\SB^{\preceq A}\to\SM_{[A]}$ such that $l=f^{\preceq A}|_{\SM_{[A]}}\circ h^\prime$, since then $h^{\preceq A}:=\tilde h^{\preceq A}+h^\prime$ is such that $g^{\preceq A}=f^{\preceq A}\circ h^{\preceq A}$.

Let $x=\pi(A)$. By the support condition, $\SN_{[A]}$ is supported as a $\CZ$-module inside $\{x\}$, and hence there is a homomorphism $\tilde l\colon (\SB^{\preceq A})^x\to \SN_{[A]}$ such that the diagram

\centerline{
\xymatrix{
\SB^{\preceq A}\ar[dr]_{\pr_x}\ar[rr]^l&&\SN_{[A]}\\
&(\SB^{\preceq A})^x\ar[ur]_{\tilde l}&\\
}
}
\noindent 
commutes. As $f$ is an epimorphism in $\bC$, $f_{[A]}\colon \SM_{[A]}\to \SN_{[A]}$ is surjective (cf. Lemma \ref{lemma-ses}). As $(\SB^{\preceq A})^x$ is projective in the category of graded $S$-modules,  we can find a homomorphism $r\colon(\SB^{\preceq A})^x\to\SM_{[A]}$ such that the diagram

\centerline{
\xymatrix{
&(\SB^{\preceq A})^x\ar[dr]^{\tilde l}\ar[dl]_r&\\
\SM_{[A]}\ar[rr]^{f_{[A]}}&&\SN_{[A]}
}
}
\noindent
commutes.  Now we can take for $h'$ the composition $\SB^{\preceq A}\xrightarrow{\pr_x}(\SB^{\preceq A})^x\xrightarrow{r}\SM_{[A]}$.
\end{proof}
\subsection{Special projectives}
Our next goal is to show that the sheaves $\ul{\CK_\lambda}$ are projective objects in the categories $\bC$ and $\bB$. For this, we will need the following result.

\begin{lemma}\label{lemma-specset2}
For any alcove $A$  and any $\lambda\in X$ with $\{\preceq A\}\cap\CK_\lambda\ne\emptyset$ we have
$\pi(\{\preceq A\}\cap\CK_\lambda)=\pi(\{\prec A\}\cap\CK_\lambda)\cup\{\pi(A)\}$.
\end{lemma}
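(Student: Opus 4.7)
The inclusion $\pi(\{\preceq A\}\cap\CK_\lambda)\subseteq\pi(\{\prec A\}\cap\CK_\lambda)\cup\{\pi(A)\}$ is immediate from the decomposition $\{\preceq A\}=\{\prec A\}\sqcup\{A\}$: any $C\in\{\preceq A\}\cap\CK_\lambda$ satisfies either $C\prec A$ (giving $\pi(C)\in\pi(\{\prec A\}\cap\CK_\lambda)$) or $C=A$ (giving $\pi(C)=\pi(A)$). For the reverse inclusion, the containment $\pi(\{\prec A\}\cap\CK_\lambda)\subseteq\pi(\{\preceq A\}\cap\CK_\lambda)$ is automatic, so the real content is to show $\pi(A)\in\pi(\{\preceq A\}\cap\CK_\lambda)$. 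By Lemma \ref{lemma-specset1}(2) the orbit $A+\DZ R$ meets $\CK_\lambda$ in a unique alcove $B$, and the task reduces to proving $B\preceq A$.

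A preliminary observation: by Lemma \ref{lemma-specset1}(1), the order-isomorphism $\tau_\lambda$ sends the Coxeter identity to $A_\lambda^-$, so $A_\lambda^-$ is the $\preceq$-minimum of $\CK_\lambda$. Chaining with any hypothesis-witness $C\in\{\preceq A\}\cap\CK_\lambda$ yields $A_\lambda^-\preceq C\preceq A$, so the nontrivial hypothesis content is just $A_\lambda^-\preceq A$.

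To establish $B\preceq A$, I would translate the problem into additive language using $\hCW=\CW_\lambda\ltimes\DZ R$. Writing $A=A_w$ and $A_\lambda^-=A_{u_\lambda}$ and decomposing uniquely $w=x\,t_\nu$ and $u_\lambda=x_\lambda\,t_{\nu_\lambda}$ with $x,x_\lambda\in\CW_\lambda$ and $\nu,\nu_\lambda\in\DZ R$, a direct manipulation in the semidirect product shows that the $\CK_\lambda$-representative of $A+\DZ R$ is $B=A_{x\,t_{\nu_\lambda}}$, so that $A=t_{\bar x(\nu-\nu_\lambda)}(B)$, where $\bar x\in\CW_0$ is the linear part of $x$. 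Now iterate the basic inequality $C\preceq t_\alpha C$ for every $C\in\CA$ and $\alpha\in R^+$ (a consequence of $t_\alpha=s_{\alpha,n+1}\circ s_{\alpha,n}$ and the defining relations of the generic Bruhat order): the problem reduces to showing that the translation vector $\bar x(\nu-\nu_\lambda)\in\DZ R$ lies in the positive cone $\DZ_{\ge0}R^+$.

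The main obstacle is establishing this cone-positivity from $A_\lambda^-\preceq A$. Geometrically, $\bar x(\nu-\nu_\lambda)=\mu'-\lambda$ where $\mu'$ is the unique lattice point in $\overline{A}\cap(\lambda+\DZ R)$ (uniqueness coming from the uniqueness of $B$). My plan is to verify $\mu'-\lambda\in\DZ_{\ge 0}R^+$ either by induction along a reflection chain $A_\lambda^-=A_0\prec A_1\prec\cdots\prec A_m=A$, tracking how the $\DZ R$-component of the semidirect decomposition $A_i=A_{x_i t_{\nu_i}}$ evolves at each step via the identity $s_{\alpha,n}=s_{\alpha,\langle\lambda,\alpha^\vee\rangle}\cdot t_{(n-\langle\lambda,\alpha^\vee\rangle)\alpha}$ and checking that the accumulated translation stays in the positive cone, or via a more direct length-function argument in the spirit of Section \ref{subsec-lengthfunc}, counting signed hyperplane crossings of a path from $A_\lambda^+$ through $A_\lambda^-$ to $A$.
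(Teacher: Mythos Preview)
Your reduction is correct and coincides with the paper's: the only nontrivial point is $\pi(A)\in\pi(\{\preceq A\}\cap\CK_\lambda)$, equivalently that the unique $\gamma\in\DZ R$ with $A\in\CK_{\lambda+\gamma}$ satisfies $\gamma\ge 0$ (so that $A-\gamma\in\CK_\lambda$ and $A-\gamma\preceq A$). You also correctly isolate ``$\gamma\in\DZ_{\ge 0}R^+$'' as the crux. However, the proposal stops there: neither of the two plans you sketch is carried out, and the factorization you quote is in the wrong order (one has $s_{\alpha,n}=t_{(n-m)\alpha}\circ s_{\alpha,m}$, not $s_{\alpha,m}\circ t_{(n-m)\alpha}$, for $m=\langle\lambda,\alpha^\vee\rangle$). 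So as written the argument is incomplete.

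The paper dispatches exactly this step with a short geometric observation that replaces your algebraic bookkeeping. Pick \emph{any} witness $C\in\{\preceq A\}\cap\CK_\lambda$ and a chain
\[
A=A_0\succ A_1\succ\cdots\succ A_r=C,\qquad A_i=s_{\alpha_i,n_i}(A_{i-1}).
\]
Since $A_{i-1}\succ s_{\alpha_i,n_i}(A_{i-1})$ forces $A_{i-1}\subset H_{\alpha_i,n_i}^+$, every $v\in\overline{A_{i-1}}$ satisfies
\[
v-s_{\alpha_i,n_i}(v)=(\langle v,\alpha_i^\vee\rangle-n_i)\,\alpha_i\in\DR_{\ge 0}\,\alpha_i.
\]
Composing, $v-x(v)\in\DR_{\ge 0}R^+$ for every $v\in\overline{A}$, where $x=s_{\alpha_r,n_r}\cdots s_{\alpha_1,n_1}$. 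Now take $v=\lambda+\gamma\in\overline{A}$. Because $\hCW$ preserves the coset $\lambda+\DZ R$ and $\overline{C}\cap(\lambda+\DZ R)=\{\lambda\}$ (this is the same uniqueness that gives a unique $\gamma$), one gets $x(\lambda+\gamma)=\lambda$, hence $\gamma=(\lambda+\gamma)-\lambda\in\DR_{\ge 0}R^+\cap\DZ R$, i.e.\ $\gamma\ge 0$. This is precisely the positivity you were seeking; the point you were missing is to follow the \emph{vertex} $\lambda+\gamma$ of the alcove through the reflection chain rather than the $\DZ R$-component of a semidirect-product decomposition of the indexing Weyl group element.
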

\begin{proof} We only have to show that $\pi(A)\in\pi(\{\preceq A\}\cap\CK_\lambda)$. 
Note that there is a unique $\gamma\in\DZ R$ with $A\in\CK_{\lambda}+\gamma=\CK_{\lambda+\gamma}$. We claim that $\gamma\ge 0$. This serves our purpose, since then $A-\gamma\in\CK_\lambda$ and $A-\gamma\preceq A$. 

Pick an element $B\in\CK_\lambda$ with $B\preceq A$. Then there exist $\alpha_1, \ldots, \alpha_r\in R^+$ and $n_1, \ldots, n_r\in\DZ$ such that 
\[A\succ A_1\succ  A_2\succ \ldots\succ  A_r=B,\]
where $A_i= s_{\alpha_i, n_i}s_{\alpha_{i-1}, n_{i-1}}\ldots  s_{\alpha_1, n_1}(A)$. Denote by $x:=s_{\alpha_r, n_r}s_{\alpha_{r-1}, n_{r-1}}\ldots  s_{\alpha_1, n_1}.$
By definition of the generic order (by induction on $r$), it is easy to see that if $v$ is in the closure of $A$, then $v- x(v)\in\DR_{\geq 0}R^+$. Since $x(\lambda+\gamma)=\lambda$, we deduce $\gamma\in\DR_{\geq 0} R^+$ and, hence, $\gamma\geq 0$. 
\end{proof}

\begin{proposition}\label{prop-Oproj} The object $\ul{\CK_\lambda}$ is projective in $\bC$. 
\end{proposition}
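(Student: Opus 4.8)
The plan is to verify the two conditions of Proposition \ref{prop-projobj} for $\SB = \ul{\CK_\lambda}$. The first condition is immediate: by Proposition \ref{prop-ZKinC}(2), the stalk $\ul{\CK_\lambda}_{[A]}$ is either $0$ or a free $S$-module of rank one, and since $\ul{\CK_\lambda}^{\preceq A}$ is a finite extension of such stalks (Remark \ref{rem-subquot}), the stalk $(\ul{\CK_\lambda}^{\preceq A})^x$ is a finitely generated torsion-free, hence free, graded $S$-module; free graded $S$-modules are projective in the category of graded $S$-modules. So the real work is condition (2): given compatible lifts $h^{\preceq B}$ for all $B \prec A$ into some $\SM \in \bC$, I must extend to a homomorphism $h^{\preceq A}\colon \ul{\CK_\lambda}^{\preceq A} \to \SM^{\preceq A}$.

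The key point is to use the explicit description $\ul{\CK_\lambda}^{\preceq A} = \CZ(\CK_\lambda \cap \{\preceq A\})$. First I would dispose of the trivial case $\CK_\lambda \cap \{\preceq A\} = \emptyset$, where $\ul{\CK_\lambda}^{\preceq A} = 0$ and there is nothing to do. Otherwise, write $\CJ^\prime := \{\prec A\}$ and $\CK^\prime := \CK_\lambda \cap \{\prec A\}$. The sheaf property gives $\ul{\CK_\lambda}^{\CJ^\prime} = \varprojlim_{B \prec A} \ul{\CK_\lambda}^{\preceq B}$ in an appropriate sense; more concretely, the compatible family $(h^{\preceq B})_{B \prec A}$ assembles, by the universal property of the sheafification description of sections on $\{\prec A\}$ (i.e.\ $\ul{\CK_\lambda}^{\prec A}$ is the limit of the $\ul{\CK_\lambda}^{\preceq B}$, using flabbiness and Remark \ref{rem-Tmaxel}), into a single homomorphism $h^{\prec A}\colon \ul{\CK_\lambda}^{\prec A} \to \SM^{\prec A}$. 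So the task reduces to extending $h^{\prec A}$ along the restriction $\ul{\CK_\lambda}^{\preceq A} \to \ul{\CK_\lambda}^{\prec A}$.

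The crucial structural fact is Lemma \ref{lemma-specset2}: $\pi(\CK_\lambda \cap \{\preceq A\}) = \pi(\CK_\lambda \cap \{\prec A\}) \cup \{\pi(A)\}$. In the case $A \in \CK_\lambda$, this says that passing from $\{\prec A\}$ to $\{\preceq A\}$ adds exactly one new alcove $A$, lying in a $\DZ R$-orbit $x = \pi(A)$ that may or may not already be represented. The restriction map $\CZ(\CK_\lambda \cap \{\preceq A\}) \to \CZ(\CK_\lambda \cap \{\prec A\})$ has kernel $\ul{\CK_\lambda}_{[A]} \cong S[-2l_A]$, a free rank-one $S$-module concentrated in the orbit $x$. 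Now I would argue as in the proof of Proposition \ref{prop-projobj} itself: since $\ul{\CK_\lambda}^{\preceq A}$ surjects onto its stalk $(\ul{\CK_\lambda}^{\preceq A})^x$, which is free (hence projective) over $S$, and since the relevant diagram of $\CZ$-modules forces the obstruction to factor through this stalk, I can lift. More precisely: any set-theoretic/graded-$S$-linear splitting of $S \hookrightarrow \CZ(\CK_\lambda \cap \{\preceq A\})$ picking out the $A$-coordinate — which exists because the $A$-coordinate of an element of $\CZ(\CK_\lambda \cap \{\preceq A\})$ is constrained only modulo the $\alpha^\vee$ for $\alpha$ with $s_{\alpha,\langle\lambda,\alpha^\vee\rangle}(A) \preceq A$, and one can choose a section into the free module generated by $\prod\alpha^\vee$ — combined with the $\DZ R$-equivariance, reduces the extension problem to lifting a homomorphism out of a free graded $S$-module, which is automatic. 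When $A \notin \CK_\lambda$ we have $\ul{\CK_\lambda}^{\preceq A} = \ul{\CK_\lambda}^{\prec A}$ and $h^{\preceq A} := h^{\prec A}$ works trivially.

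The main obstacle I anticipate is bookkeeping the $\CZ$-module structure carefully in the extension step: I must make sure the lift $h^{\preceq A}$ is $\CZ$-linear, not merely $S$-linear. This is where the support condition on $\SM$ and Lemma \ref{lemma-specset2} do the essential work — they guarantee that the new degree of freedom sits in a single $\DZ R$-orbit $x$, so that the $\CZ$-action on the relevant quotient/subobject factors through the projection $\CZ \to \CZ(x)$ (equivalently, acts through the $x$-coordinate), matching the behaviour of $\ul{\CK_\lambda}_{[A]}$ by Remark \ref{rem-suppcond}. Getting the compatibility of the chosen $S$-linear section with this $\CZ$-structure, and checking that the resulting $h^{\preceq A}$ restricts correctly to each $h^{\preceq B}$, is the delicate part; everything else is formal.
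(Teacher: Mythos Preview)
Your overall strategy matches the paper's: verify the two hypotheses of Proposition~\ref{prop-projobj} for $\SB=\ul{\CK_\lambda}$, with Lemma~\ref{lemma-specset2} supplying the crucial support information. But there are two places where your execution diverges from the paper and becomes shaky.

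\medskip
\noindent\textbf{Condition (1).} Your step ``finitely generated torsion-free, hence free'' is false over $S$ once $S$ has more than one variable. The paper argues differently and more simply: by Proposition~\ref{prop-Zflab} there is a surjection $\CZ\twoheadrightarrow\SB^{\preceq A}$, so each stalk $(\SB^{\preceq A})^x$ is a torsion-free quotient of $\CZ^x\cong S$, hence is $0$ or isomorphic to $S$.

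\medskip
\noindent\textbf{Condition (2).} This is where the real gap lies. You propose to choose an $S$-linear section, build an $S$-linear extension, and then somehow verify $\CZ$-linearity; you also invoke ``arguing as in the proof of Proposition~\ref{prop-projobj}'', but that proof \emph{uses} condition~(2) rather than establishing it, so the reference is circular. You correctly identify Lemma~\ref{lemma-specset2} and the support condition as the key inputs, but you do not have a mechanism that turns them into a $\CZ$-linear $h^{\preceq A}$.

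The paper's device, which you are missing, is that $\SB^{\preceq A}=\CZ(\CK_\lambda\cap\{\preceq A\})=\CZ^{\pi(\{\preceq A\}\cap\CK_\lambda)}$ is a \emph{cyclic} $\CZ$-module: it is the quotient of $\CZ$ by the kernel of the support projection. Hence a $\CZ$-linear map out of $\SB^{\preceq A}$ is determined by the image of $1$, and the only constraint is that this image lie in a $\CZ$-module with $\CZ$-support contained in $\pi(\{\preceq A\}\cap\CK_\lambda)$. Concretely, the paper sets
\[
Q:=(r_{\preceq A}^{\prec A})^{-1}\bigl(h^{\prec A}(\SB^{\prec A})\bigr)\subset\SM^{\preceq A},
\]
observes that $\supp_\CZ Q\subset\pi(\{\prec A\}\cap\CK_\lambda)\cup\{\pi(A)\}=\pi(\{\preceq A\}\cap\CK_\lambda)$ (the first inclusion from the support condition on $\SM$ together with the support of $\SB^{\prec A}$, the equality from Lemma~\ref{lemma-specset2}), and then simply picks any lift of $h^{\prec A}(1)$ to $Q$. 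The resulting $\CZ$-map $\CZ\to Q\subset\SM^{\preceq A}$ factors through $\SB^{\preceq A}$ by the support computation, and restricts to $h^{\prec A}$ by construction. No $S$-linear splittings or after-the-fact $\CZ$-linearity checks are needed.
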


\begin{proof} We show that $\SB:=\ul{\CK_\lambda}$ satisfies the sufficient conditions listed in Proposition \ref{prop-projobj}. By  Proposition \ref{prop-Zflab} for any open subset $\CJ$ of $\CA$ we have a surjective homomorphism $\CZ\to\SB^\CJ$. As the stalks, by definition, are torsion free as $S$-modules, it follows that  $(\SB^{\preceq A})^x$ either vanishes or is graded free of rank $1$ as an $S$-module. In any case the stalk is projective in the category of graded $S$-modules.

We need to check the second condition. So  let $A\in\CA$  and let $\SM$ be an object in $\bC$. Suppose  that we are given  homomorphisms $h^{\preceq B}\colon\SB^{\preceq B}\to \SM^{\preceq B}$ of $\CZ$-modules for all $B\prec A$ and that these data are compatible with the restriction homomorphisms. We have to find a homomorphism $h^{\preceq A}\colon \SB^{\preceq A}\to \SM^{\preceq A}$ extending these data. We distinguish two cases. If $\{\preceq A\}\cap\CK_\lambda=\emptyset$, then $\SB^{\preceq B}=0$ for all $B\preceq A$, so $h^{\preceq A}=0$ serves our purpose. 

Suppose that  $\{\preceq A\}\cap\CK_\lambda\ne \emptyset$. 
Now the homomorphisms $h^{\preceq B}$ for $B\prec A$  combine and yield a homomorphism $h^{\prec A}\colon\SB^{\prec A}\to\SM^{\prec A}$ of $\CZ$-modules and we need to find $h^{\preceq A}\colon\SB^{\preceq A}\to\SM^{\preceq A}$ such that  the diagram

\centerline{
\xymatrix{
\SB^{\preceq A}\ar[r]^{h^{\preceq A}}\ar[d]_{r_{\preceq A}^{\prec A}}&\SM^{\preceq A}\ar[d]^{r_{\preceq A}^{\prec A}}\\
\SB^{\prec A}\ar[r]^{h^{\prec A}}&\SM^{\prec A}.
}
}
\noindent
commutes.
We claim that $Q:=(r_{\preceq A}^{\prec A})^{-1}(h^{\prec A}(\SB^{\prec A}))$ is $\CZ$-supported inside $\pi(\{\preceq A\}\cap\CK_\lambda)$. Indeed, the image of $h^{\prec A}$ is $\CZ$-supported inside $\pi(\{\prec A\}\cap\CK_\lambda)$, as $\SB^{\prec A}$ is, and the kernel of $r_{\preceq A}^{\prec A}$ is supported inside $\pi(\{A\})$, as $\SM$ satisfies the support condition, and the claim follows from  Lemma \ref{lemma-specset2}.
Now choose a homomorphism $h\colon\CZ\to Q$ of $\CZ$-modules that renders the diagram

\centerline{
\xymatrix{
\CZ\ar[rrr]^h\ar[d]^c&&&Q\ar[d]^{r_{\preceq A}^{\prec A}}\\
\SB^{\prec A}=\CZ^{\pi(\{\prec A\}\cap\CK_\lambda)}\ar[rrr]^{h^{\prec A}}&&&\SM^{\prec A}
}
}
\noindent
commutative ($c$ is the quotient homomorphism). By what we observed above, $h$ must factor over the quotient $\CZ\to\CZ^{\pi(\{\preceq A\}\cap \CK_\lambda)}=\SB^{\preceq A}$, and the resulting homomorphism $h^{\preceq A}\colon \SB^{\preceq A}\to Q\subset\SM^{\preceq A}$ satisfies the conditions mentioned in Proposition \ref{prop-projobj}.

\end{proof}

\section{Wall crossing  functors}\label{sec-wcf}
The main purpose of this section  is to construct a {\em wall crossing functor } $\vartheta_s\colon \bC\to\bC$ for each affine simple reflection $s\in\hCS$. Fix an element $s$ in $\hCS$.

\subsection{The $s$-invariant subalgebra}
Consider the action $\CA\to\CA$, $A\mapsto As$ of $s$ on the right of $\CA$, and define
\begin{align*}
\CZ^s:=\left\{(z_A)\in\CZ\mid z_A=z_{A s}\text{ for all $A\in\CA$}\right\},\\
\CZ^{-s}:=\left\{(z_A)\in\CZ\mid z_A=-z_{A s}\text{ for all $A\in\CA$}\right\}.
\end{align*}
As we can identify $\CZ$ with the structure algebra of the moment graph $\CG$ associated with the root system $R$ in such a way that the right action of $s$  corresponds to the right multiplication of $s$ on $\CW_0$, we have the following  (note that we assume that $2$ is invertible in $k$).

\begin{lemma}[{\cite[Lemma 5.1 \& Proposition 5.3]{FieTAMS}}]\label{lemma-lstrucfree}
\begin{enumerate}
\item We have a decomposition $\CZ=\CZ^s\oplus \CZ^{-s}$, and $\CZ^{-s}$ is a free $\CZ^s$-module of graded rank $v^2$ (i.e. $\CZ^{-s}\cong\CZ^s[-2]$).
\item We have $\supp_{\CZ}(\CZ\otimes_{\CZ^s} N)=(\supp_{\CZ} N)\cup (\supp_{\CZ} N)s$ for any $\CZ$-module $N$ that is torsion free as an $S$-module.  
\end{enumerate}
\end{lemma}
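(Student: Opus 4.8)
The statement to prove is Lemma \ref{lemma-lstrucfree}, but since it is cited as coming from \cite[Lemma 5.1 \& Proposition 5.3]{FieTAMS}, I would give the essential argument that adapts those results to the present setting. The key is that under the identification of $\CZ$ with the structure algebra of the moment graph $\CG$ of $R$, the right action $A\mapsto As$ corresponds to right multiplication by a simple reflection $s$ on $\CW_0$, and $\CZ^s$ is exactly the structure algebra of the quotient graph $\CG/s$.

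For part (1): since $2$ is invertible in $k$, the operator $\sigma\colon\CZ\to\CZ$, $(z_A)\mapsto(z_{As})$, is an involution, so $e^\pm=\tfrac12(\id\pm\sigma)$ are orthogonal idempotents on $\CZ$ (as a $\CZ^s$-module, or as a $k$-module) with images $\CZ^s$ and $\CZ^{-s}$; this gives the decomposition $\CZ=\CZ^s\oplus\CZ^{-s}$ at once. For freeness of $\CZ^{-s}$ over $\CZ^s$, I would exhibit an explicit generator. Working with $\CK_0$ identified with $\CW_0$ via Lemma \ref{lemma-specset1}, and using the isomorphism $\CZ\cong\CZ(R)$ from Proposition \ref{prop-Zflab}, pick the simple root $\alpha$ with $s=s_\alpha$, and consider the element $\theta$ defined paircoordinatewise on each $\langle s\rangle$-orbit $\{x,xs\}$ by sending the ``$s$-dominant'' member to $\tfrac12(\partial_\alpha\text{-type correction})$; concretely one checks that the element with $z_x=\tfrac12(c_x-c_{xs})\alpha^\vee$-free piece can be chosen so that $\theta\in\CZ^{-s}$ and $\theta$ is a free generator of $\CZ^{-s}$ over $\CZ^s$ of degree $2$. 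The verification that $\theta$ lies in $\CZ$ amounts to checking the GKM-type congruences along edges $\{y,s_{\beta,0}y\}$, which reduces to the $\ch k\neq2$ hypothesis together with the $k$-linear independence of distinct positive roots (the GKM-condition). That $\theta$ generates freely follows because localizing at $S^\emptyset$ splits $\CZ$ into a product of copies of $S^\emptyset$ over $\CA^\emptyset$ (Lemma \ref{lemma-locZ}(1)), and on each $\langle s\rangle$-orbit the pair $(1,\theta)$ is visibly an $S^\emptyset$-basis adapted to the $\pm1$-eigenspaces of $\sigma$; a standard descent/Nakayama-type argument over the graded local ring $S$ then upgrades this to a genuine free basis of $\CZ$ over $\CZ^s$.

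For part (2): by definition $\supp_\CZ N=\{x\in\CA^\emptyset\mid N^{\emptyset,x}\neq0\}$, so I would compute $(\CZ\otimes_{\CZ^s}N)^\emptyset$. Since $\CZ$ is a free $\CZ^s$-module of rank $2$, tensoring with $N$ over $\CZ^s$ commutes with the localization $\cdot\otimes_SS^\emptyset$, so $(\CZ\otimes_{\CZ^s}N)^\emptyset\cong\CZ^\emptyset\otimes_{(\CZ^s)^\emptyset}N^\emptyset$. Now $\CZ^\emptyset=\bigoplus_{x\in\CA^\emptyset}S^\emptyset$ and $(\CZ^s)^\emptyset=\bigoplus_{\{x,xs\}\in\CA^\emptyset/\langle s\rangle}S^\emptyset$ (the $s$-invariants of the former), and $\CZ^\emptyset$ is free of rank $2$ over $(\CZ^s)^\emptyset$ in such a way that the summand $S^\emptyset$ indexed by the orbit $\{x,xs\}$ acts on both $S^\emptyset$-factors for $x$ and for $xs$. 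Hence $(\CZ^\emptyset\otimes_{(\CZ^s)^\emptyset}N^\emptyset)^x\cong N^{\emptyset,x}\oplus N^{\emptyset,xs}$ as $S^\emptyset$-modules, which is nonzero precisely when $x\in\supp_\CZ N$ or $xs\in\supp_\CZ N$, i.e. when $x\in(\supp_\CZ N)\cup(\supp_\CZ N)s$. This gives the claimed equality.

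\textbf{Main obstacle.} The routine bookkeeping is the localization compatibility in part (2); the genuinely delicate point is part (1), specifically writing down a free generator $\theta$ of $\CZ^{-s}$ over $\CZ^s$ and proving it generates freely rather than just after inverting some roots. This is where the GKM-condition (in particular $\ch k\neq2$ and pairwise $k$-linear independence of positive roots) is used in an essential way, since it is exactly what forces the edge congruences defining $\CZ$ to behave well under the averaging idempotents; without it $\CZ^{-s}$ need not be free of rank one over $\CZ^s$. I expect to lean on \cite[Lemma 5.1 \& Proposition 5.3]{FieTAMS} for this step, since the argument there is already carried out for the structure algebra of a moment graph with a reflection action, and the present $\CZ$ is such a structure algebra by Proposition \ref{prop-Zflab}.
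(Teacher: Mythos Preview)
Your proposal is correct and matches the paper's approach: the paper does not prove this lemma at all but simply cites \cite[Lemma 5.1 \& Proposition 5.3]{FieTAMS}, after noting (exactly as you do) that $\CZ$ can be identified with the structure algebra of the moment graph $\CG$ associated with $R$ in such a way that $A\mapsto As$ corresponds to right multiplication by $s$ on $\CW_0$. Your sketch therefore already goes beyond what the paper provides.

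A minor comment on the sketch itself: your outline for part (2) via localization is clean and correct. For part (1), the eigenspace decomposition via the idempotents $\tfrac12(\id\pm\sigma)$ is fine, but your description of the generator $\theta$ of $\CZ^{-s}$ is too vague to stand on its own (the phrases ``$\partial_\alpha$-type correction'' and ``$\alpha^\vee$-free piece'' are not an actual construction). In \cite{FieTAMS} the generator is given explicitly, essentially by $\theta_x=x(\alpha^\vee)$ with a sign depending on whether $xs>x$ or $xs<x$, and the verification that it lies in $\CZ$ and generates freely is a direct edge-by-edge check using the GKM-condition rather than a descent argument from $S^\emptyset$. Since you already plan to defer to the cited reference for this step, this is not a gap in your plan, only in the written sketch.
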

We consider the functor $\epsilon_s:=\CZ\otimes_{\CZ^s}\cdot[1]\colon\CZ\catmod\to\CZ\catmod$. By the above, this is an exact functor (with respect to the natural exact structure on $\CZ\catmod$), and $\epsilon_s M\cong M[1]\oplus M[-1]$ as an $S$-module. It follows that  $\epsilon_s$ restricts to a endofunctor on the category $\CZ\catmod^f$.

\begin{theorem}\label{thm-main} Let $s\in\hCS$. 
\begin{enumerate}
\item There is an up to isomorphism unique functor $\vartheta_s\colon\bC\to\bC$ with the following property. For any $s$-invariant open subset $\CJ$ of $\CA$ we can choose an isomorphism $s_\CJ\colon(\vartheta_s\SM)^\CJ\to \epsilon_s(\SM^\CJ)$ of $\CZ$-modules in such a way that for all inclusions $\CJ^{\prime}\subset\CJ$ of $s$-invariant subsets the diagram

\centerline{
\xymatrix{
(\vartheta_s\SM)^{\CJ}\ar[r]^{s_\CJ}\ar[d]_{r^{\CJ}_{\CJ^\prime}}&\epsilon_s(\SM^\CJ)\ar[d]^{\epsilon_s(r^\CJ_{\CJ^\prime})}\\
(\vartheta_s\SM)^{\CJ^\prime}\ar[r]^{s_{\CJ^\prime}}&\epsilon_s(\SM^{\CJ^\prime})
}
}
\noindent
commutes.
\item The functor $\vartheta_s$ has the following properties:
\begin{enumerate}
\item It is exact.
\item It is self-adjoint, i.e. there is a bifunctorial isomorphism
$$
\Hom(\vartheta_s\SM,\SN)\cong\Hom(\SM,\vartheta_s\SN)
$$ 
for $\SM,\SN\in\bC$.
\item It preserves the category $\bB$ of objects that admit a Verma flag.
\item For $A\in\CA$ with $A\preceq As$ we have functorial isomorphisms
$$
(\vartheta_s\SM)_{[A]}\cong \SM_{[A,As]}[1]\text{ and }(\vartheta_s\SM)_{[As]}\cong \SM_{[A,As]}[-1].
$$
\end{enumerate}
\end{enumerate}
\end{theorem}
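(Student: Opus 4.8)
The strategy is to push everything down to the functor $\epsilon_s=\CZ\otimes_{\CZ^s}(-)[1]$, whose relevant properties on $\CZ\catmod^f$ are recorded in Lemma~\ref{lemma-lstrucfree}: it is exact and satisfies $\supp_\CZ(\epsilon_s M)=\supp_\CZ M\cup(\supp_\CZ M)s$. For uniqueness in (1) the point is that any such $\vartheta_s\SM$ is forced: for $s$-dominant $A$ the set $\{\preceq A\}$ is itself $s$-invariant (Lemma~\ref{lemma-propgenord1}), so the stalk $(\vartheta_s\SM)^{\preceq A}\cong\epsilon_s(\SM^{\preceq A})$ is prescribed; for $s$-antidominant $A$ the set $\{\preceq A\}^\sharp$ is $s$-invariant and open (Lemma~\ref{lemma-propgenord2}) with $\{\preceq A\}^\sharp\setminus\{\preceq A\}\subset\CA^+$, and by flabbiness and the support condition $(\vartheta_s\SM)^{\preceq A}$ must be the quotient of $(\vartheta_s\SM)^{\{\preceq A\}^\sharp}$ by its largest $\CZ$-submodule supported on $\pi(\{\preceq A\}^\sharp\setminus\{\preceq A\})$. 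For existence I would turn this into a definition: on $s$-invariant $\CJ$ set $(\vartheta_s\SM)^\CJ:=\epsilon_s(\SM^\CJ)$, and in general $(\vartheta_s\SM)^\CJ:=\epsilon_s(\SM^{\CJ^\sharp})\,/\,\big(\epsilon_s(\SM^{\CJ^\sharp})\big)_{\pi(\CJ^\sharp\setminus\CJ)}$, with the induced restriction morphisms; functoriality is then automatic since every ingredient is functorial.

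The core of part (1) is then checking $\vartheta_s\SM\in\bC$. That it is a sheaf follows from Lemma~\ref{lemma-locsheaf} together with the compatibility of $\epsilon_s$ with localization at the roots; flabbiness and finite support are inherited from $\SM$ using exactness of $\epsilon_s$ and the fact that $\epsilon_s$ only doubles $\preceq$-supports; the support condition holds by construction. The one genuinely delicate axiom is the local extension condition. Here I would use that $\epsilon_s$ sends a sheaf that is $\preceq$-supported on a single $\alpha$-string $\Lambda$ to one $\preceq$-supported on $\Lambda\cup\Lambda s$, which lies in a single $\hCW^\alpha$-orbit, and then invoke Lemma~\ref{lemma-alphastr}: if $\Lambda\cup\Lambda s$ is not totally ordered then $\Lambda$ is entirely $s$-dominant or entirely $s$-antidominant, so $\Lambda\cup\Lambda s$ splits into two $\alpha$-strings and one can refine the decomposition of $(\vartheta_s\SM)^\alpha$ accordingly, while if it is totally ordered the existing $\alpha$-string piece already contains both. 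This $\alpha$-string bookkeeping, where the alcove combinatorics of Section~\ref{sec-alcoves} really enters, is the step I expect to be the main obstacle.

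Turning to part (2), I would prove the local section formula (2d) first, since (2a)--(2c) reduce to it. Fix $A$ with $A\preceq As$. The sets $\CJ_1:=\{\preceq As\}$ and $\CJ_0:=\CJ_1\setminus\{A,As\}$ are both open and $s$-invariant (using Lemma~\ref{lemma-propgenord1}), and $\CJ_1\setminus\CJ_0=\{A,As\}$, so exactness of $\epsilon_s$ gives
\[
(\vartheta_s\SM)_{[A,As]}=\ker\big(\epsilon_s(\SM^{\CJ_1})\to\epsilon_s(\SM^{\CJ_0})\big)\cong\epsilon_s\big(\SM_{[A,As]}\big).
\]
Since $\pi(A)\neq\pi(As)$ while $\pi(A)s=\pi(As)$, the support condition identifies $(\vartheta_s\SM)_{[As]}$ with the largest $\CZ$-submodule of $\epsilon_s(\SM_{[A,As]})$ supported on $\pi(As)$ and $(\vartheta_s\SM)_{[A]}$ with the largest quotient supported on $\pi(A)$; a direct computation using $\CZ=\CZ^s\oplus\CZ^{-s}$ with $\CZ^{-s}\cong\CZ^s[-2]$ (Lemma~\ref{lemma-lstrucfree}) --- the degree-two twist contributing exactly the $[\pm1]$ --- then yields $(\vartheta_s\SM)_{[A]}\cong\SM_{[A,As]}[1]$ and $(\vartheta_s\SM)_{[As]}\cong\SM_{[A,As]}[-1]$, functorially in $\SM$.

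Finally, (2a) is immediate from (2d) and Lemma~\ref{lemma-ses}(4) together with exactness of $\epsilon_s$. For (2c), if $\SM\in\bB$ then $\SM_{[A]}$ and $\SM_{[As]}$ are graded free over $S$, hence so is the extension $\SM_{[A,As]}$ (free modules are projective, so the extension splits), so $(\vartheta_s\SM)_{[A]}$ is graded free and Lemma~\ref{lemma-Vermasubquot} applies. For (2b) I would use that $\CZ^s\subset\CZ$ is a graded Frobenius extension of degree two --- the content of Lemma~\ref{lemma-lstrucfree}, valid since $2$ is invertible in $k$ --- so that $\epsilon_s$ is self-adjoint on $\CZ\catmod$, i.e.\ $\Hom_\CZ(\epsilon_s M,N)\cong\Hom_\CZ(M,\epsilon_s N)$ naturally in both variables. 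Because $\vartheta_s\SM$ and $\vartheta_s\SN$ are flabby and determined by their sections over $s$-invariant opens, morphisms out of $\vartheta_s\SM$ and into $\vartheta_s\SN$ are determined by the corresponding families over such $\CJ$, whence
\[
\Hom_\bC(\vartheta_s\SM,\SN)\cong\varprojlim_{\CJ\text{ $s$-inv.}}\Hom_\CZ(\epsilon_s\SM^\CJ,\SN^\CJ)\cong\varprojlim_{\CJ\text{ $s$-inv.}}\Hom_\CZ(\SM^\CJ,\epsilon_s\SN^\CJ)\cong\Hom_\bC(\SM,\vartheta_s\SN),
\]
and this isomorphism is bifunctorial because the one on $\CZ\catmod$ is.
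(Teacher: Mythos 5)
Your reduction to $\epsilon_s$, the use of $s$-invariant opens, the (2d) computation via the locally closed $s$-invariant pair $\{A,As\}$, and the derivation of (2a)--(2c) from (2d) all match the paper's strategy and are correct. The gap is in how you actually build $\vartheta_s\SM$ on non-$s$-invariant opens, and the uniqueness argument has the same flaw.

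You define $(\vartheta_s\SM)^\CJ:=\epsilon_s(\SM^{\CJ^\sharp})/\bigl(\epsilon_s(\SM^{\CJ^\sharp})\bigr)_{\pi(\CJ^\sharp\setminus\CJ)}$, i.e.\ you quotient by the \emph{largest} submodule of $M=\epsilon_s(\SM^{\CJ^\sharp})$ with $\CZ$-support in $\pi(\CJ^\sharp\setminus\CJ)$. What the support condition actually forces is weaker: the kernel of $(\vartheta_s\SM)^{\CJ^\sharp}\to(\vartheta_s\SM)^{\CJ}$ is $\CZ$-supported \emph{inside} $\pi(\CJ^\sharp\setminus\CJ)$, but there is no reason it should be the largest such submodule, and $\pi(\CJ^\sharp\setminus\CJ)$ can intersect $\pi(\CJ)$ (and even $\pi(\CJ^\flat)$), since $\pi$ collapses entire $\DZ R$-orbits. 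The paper pins down the kernel by simultaneously using the open $s$-invariant set $\CJ^\flat$: the composite $(\vartheta_s\SM)^{\CJ^\sharp}\to(\vartheta_s\SM)^{\CJ}\to(\vartheta_s\SM)^{\CJ^\flat}$ has first kernel supported in $\pi(\CA^+)$ and second in $\pi(\CA^-)$, and this two-sided constraint identifies $(\vartheta_s\SM)^{\CJ}$ with the intermediate object $[f,\pi(\CA^+)]=M/W_{\pi(\CA^+)}$ of Lemma~\ref{lemma-subquotZ}, where $W=\ker\bigl(\epsilon_s(\SM^{\CJ^\sharp})\to\epsilon_s(\SM^{\CJ^\flat})\bigr)$. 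Note that $W_{\pi(\CA^+)}=W\cap M_{\pi(\CJ^\sharp\setminus\CJ)}$, which in general is strictly smaller than your $M_{\pi(\CJ^\sharp\setminus\CJ)}$. Concretely, your definition over-quotients whenever $\pi(\CJ^\sharp\setminus\CJ)\cap\pi(\CJ^\flat)\ne\emptyset$.

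This also makes your restriction morphisms problematic: the support set $\pi(\CJ^\sharp\setminus\CJ)$ depends on $\CJ$, and for $\CJ'\subset\CJ$ there is no reason the image of $\bigl(\epsilon_s(\SM^{\CJ^\sharp})\bigr)_{\pi(\CJ^\sharp\setminus\CJ)}$ under $\epsilon_s(\SM^{\CJ^\sharp})\to\epsilon_s(\SM^{\CJ'^\sharp})$ should land in $\bigl(\epsilon_s(\SM^{\CJ'^\sharp})\bigr)_{\pi(\CJ'^\sharp\setminus\CJ')}$ --- one would need $\pi(\CJ^\sharp\setminus\CJ)\cap\pi(\CJ')=\emptyset$, which can fail. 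The paper avoids this by fixing $T=\pi(\CA^+)$ once and for all and invoking the functoriality statement in Lemma~\ref{lemma-subquotZ}(2), which is exactly what produces the compatible restrictions. Your uniqueness argument has the mirror-image defect: you assert that for $s$-antidominant $A$ the kernel of $(\vartheta_s\SM)^{\{\preceq A\}^\sharp}\to(\vartheta_s\SM)^{\preceq A}$ ``must be the largest $\CZ$-submodule supported on $\pi(\{\preceq A\}^\sharp\setminus\{\preceq A\})$'' --- but the support condition only gives an upper bound on the support of the kernel, not maximality; you need the further restriction to $\{\preceq A\}^\flat$ and the uniqueness of the factorization in Lemma~\ref{lemma-subquotZ} to nail it down. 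Finally, as you yourself flag, the proof that the resulting presheaf is a sheaf and satisfies the local extension condition (the $\alpha$-string case analysis of Lemma~\ref{lemma-singstr}, in particular Lemma~\ref{lemma-alphastr}) is essential and not carried out.
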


We prove the theorem in the following way. First we show that there is at most one functor satisfying the property stated in (1) (up to isomorphism, of course). Then we construct a presheaf $\vartheta_s\SM$ for any object $\SM$ in $\bC$ in a functorial way. 
We show that $\vartheta_s\SM$ is indeed a sheaf, and even an object in $\bC$. From the construction it will be clear that it satisfies the property stated in (1). 
Hence we established the existence of the functor $\vartheta_s$ and finished the proof of (1). Then we show that this functor has all the properties listed in (2).
\subsection{The uniqueness statement}
The uniqueness statement in Theorem \ref{thm-main} follows directly from the slightly more general statement in the next proposition.

\begin{proposition}\label{prop-uniquetrans} Let $\SM $ and $\SN $ be objects in $\bC$ and suppose we are given for any $s$-invariant open subset $\CJ$ of $\CA$ a homomorphism $f^{(\CJ)}\colon \SM ^\CJ\to \SN ^\CJ$ in such a way that for any inclusion $\CJ^\prime\subset\CJ$ of $s$-invariant open subsets of $\CA$ the diagram

\centerline{
\xymatrix{
\SM ^{\CJ}\ar[rr]^{f^{(\CJ)}}\ar[d]_{r^\SM _{\CJ,\CJ^\prime}}&&\SN ^{\CJ}\ar[d]^{r^\SN _{\CJ,\CJ^\prime}}\\
\SM ^{\CJ^\prime}\ar[rr]^{f^{(\CJ^\prime)}}&&\SN ^{\CJ^{\prime}}
}
}
\noindent 
commutes. Then there is a unique homomorphism $f\colon \SM \to \SN $ in $\bC$ such that $f^\CJ=f^{(\CJ)}$ for any $s$-invariant open subset $\CJ$ of $\CA$.
\end{proposition}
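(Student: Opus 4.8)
The plan is to prove uniqueness first — it dictates the construction — and then existence, whose only real content is a single support‑theoretic observation. For uniqueness, note that a morphism $f\colon\SM\to\SN$ of sheaves on $\CA$ is determined by its components $f^{\preceq A}$, $A\in\CA$. By Lemma~\ref{lemma-propgenord2} the set $\{\preceq A\}^\sharp=\{\preceq A\}\cup\{\preceq A\}s$ is $s$-invariant and open, and since $\SM$ is flabby the restriction $\SM^{\{\preceq A\}^\sharp}\to\SM^{\preceq A}$ is surjective; as $f$ commutes with restrictions, $f^{\preceq A}$ is forced by the prescribed map $f^{\{\preceq A\}^\sharp}=f^{(\{\preceq A\}^\sharp)}$. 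The same argument applied to an arbitrary open $\CJ\subseteq\CJ^\sharp$ shows that $f^\CJ$ is determined by $f^{(\CJ^\sharp)}$, so there is at most one such $f$.

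For existence I would turn this around. For an open set $\CJ$ let $r_\SM\colon\SM^{\CJ^\sharp}\to\SM^\CJ$ and $r_\SN\colon\SN^{\CJ^\sharp}\to\SN^\CJ$ be the restriction morphisms, both surjective by flabbiness, and define $f^\CJ$ to be the unique homomorphism with $f^\CJ\circ r_\SM=r_\SN\circ f^{(\CJ^\sharp)}$. For this to be legitimate I must check that $r_\SN\circ f^{(\CJ^\sharp)}$ annihilates $\ker r_\SM$; by Remark~\ref{rem-subquot} this kernel is $\SM_{[\CJ^\sharp\setminus\CJ]}$, so the requirement is precisely that $f^{(\CJ^\sharp)}$ carry $\SM_{[\CJ^\sharp\setminus\CJ]}$ into $\SN_{[\CJ^\sharp\setminus\CJ]}$. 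This is the one substantial point. Granting it, a routine diagram chase (using $(\CJ^\prime)^\sharp\subseteq\CJ^\sharp$ for $\CJ^\prime\subseteq\CJ$, the hypothesised compatibilities, and surjectivity of the various $r_\SM$) shows the $f^\CJ$ are compatible with restrictions, so $f=(f^\CJ)$ is a morphism of sheaves of $\CZ$-modules, hence a morphism in $\bC$ since $\bC$ is full; and if $\CJ$ is $s$-invariant then $\CJ^\sharp=\CJ$, $r_\SM=r_\SN=\id$, and $f^\CJ=f^{(\CJ)}$, as required.

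To prove the key inclusion I would use that $\SN$ is a sheaf to reduce it to showing $f^{(\CJ^\sharp)}(m)|_{\preceq C}=0$ whenever $C\in\CJ$ and $m\in\SM^{\CJ^\sharp}$ with $m|_\CJ=0$. If $C$ is $s$-dominant then $\{\preceq C\}$ is itself $s$-invariant open (Lemma~\ref{lemma-propgenord1}) and contained in $\CJ^\sharp$, so the hypothesised compatibility gives $f^{(\CJ^\sharp)}(m)|_{\preceq C}=f^{(\{\preceq C\})}(m|_{\preceq C})=0$. Suppose $C$ is $s$-antidominant. Then $Cs$ is $s$-dominant, $\{\preceq C\}^\sharp=\{\preceq Cs\}$ is $s$-invariant open and contained in $\CJ^\sharp$ (both equal the smallest $s$-invariant open set containing $C$, by Lemmas~\ref{lemma-propgenord1} and~\ref{lemma-propgenord2}), and writing $n:=m|_{\preceq Cs}$, so that $n|_{\preceq C}=0$, the compatibilities reduce the claim to $g(n)=0$, where $g\colon\SM^{\preceq Cs}\to\SN^{\preceq C}$ is the $\CZ$-linear composite of $f^{(\{\preceq Cs\})}$ with the restriction to $\{\preceq C\}$. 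Now $n$ lies in $\ker(\SM^{\preceq Cs}\to\SM^{\preceq C})$, which by Remark~\ref{rem-subquot} and the support condition is $\CZ$-supported inside $\pi$ of the locally closed set $\{\preceq Cs\}\setminus\{\preceq C\}$ — a set consisting of $s$-dominant alcoves — hence inside $\pi(\CA^{+})$; since $g$ is $\CZ$-linear, $g(n)$ is $\CZ$-supported inside $\pi(\CA^{+})$ as well. On the other hand, $\{\preceq C\}^\flat$ is an $s$-invariant open set (open by Lemma~\ref{lemma-propgenord2}) contained in $\CJ^\sharp$, and the hypothesised compatibility with it forces $g(n)$ into $\ker(\SN^{\preceq C}\to\SN^{\{\preceq C\}^\flat})$, which for the same reason is $\CZ$-supported inside $\pi$ of the locally closed set $\{\preceq C\}\setminus\{\preceq C\}^\flat$ — a set of $s$-antidominant alcoves — hence inside $\pi(\CA^{-})$. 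Finally $\pi(\CA^{+})\cap\pi(\CA^{-})=\emptyset$, because $\DZ R$-translations commute with the right $s$-action and hence preserve $s$-dominance, so no $\DZ R$-orbit contains both an $s$-dominant and an $s$-antidominant alcove. Therefore $g(n)$ is $\CZ$-supported in the empty set, and so $g(n)=0$.

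The main obstacle is exactly this last argument, and its single idea is to play the two $s$-invariant opens $\{\preceq Cs\}$ and $\{\preceq C\}^\flat$ against each other, forcing the obstruction $g(n)$ to be $\CZ$-supported both inside $\pi(\CA^{+})$ and inside $\pi(\CA^{-})$, which are disjoint. I expect everything else — the reduction to principal opens $\{\preceq C\}$, the descent along the restrictions $r_\SM$, and the gluing of the $f^\CJ$ into a morphism — to be routine bookkeeping; it is perhaps worth noting that the local extension condition plays no role here, only the support condition does.
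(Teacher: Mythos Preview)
Your proof is correct, but it takes a different route from the paper's.

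The paper's argument uses both $\CJ^\sharp$ and $\CJ^\flat$ at once. It observes that the composite $\SM^{\CJ^\sharp}\to\SM^\CJ\to\SM^{\CJ^\flat}$ exhibits $\SM^\CJ$ as the intermediate object $[r_{\CJ^\sharp}^{\CJ^\flat},\pi(\CA^{+})]$ of Lemma~\ref{lemma-subquotZ}: the first kernel is $\CZ$-supported in $\pi(\CJ^\sharp\setminus\CJ)\subset\pi(\CA^{+})$ and the second in $\pi(\CJ\setminus\CJ^\flat)\subset\pi(\CA^{-})$, and these are disjoint for exactly the reason you give. The same identification holds for $\SN$, and then the functoriality clause of Lemma~\ref{lemma-subquotZ} produces the unique $f^\CJ$ from $f^{(\CJ^\sharp)}$ and $f^{(\CJ^\flat)}$ in one stroke.

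Your approach is more elementary: you descend only through the surjection $\SM^{\CJ^\sharp}\to\SM^\CJ$, and then verify well-definedness stalkwise via the sheaf condition, treating $s$-dominant and $s$-antidominant $C$ separately. The $\flat$-construction enters only locally, through $\{\preceq C\}^\flat$, rather than globally through $\CJ^\flat$. Both arguments hinge on the same disjointness $\pi(\CA^{+})\cap\pi(\CA^{-})=\emptyset$ and the same inclusions $\CJ^\sharp\setminus\CJ\subset\CA^{+}$, $\CJ\setminus\CJ^\flat\subset\CA^{-}$ (the latter applied to $\CJ=\{\preceq C\}$ in your case). The paper's version is shorter and, more importantly, packages the identification $\SM^\CJ\cong[r_{\CJ^\sharp}^{\CJ^\flat},\pi(\CA^{+})]$ in a form that is reused verbatim in the construction of $\vartheta_s\SM$ itself; your version avoids the abstract Lemma~\ref{lemma-subquotZ} entirely. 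Your closing remark that the local extension condition plays no role is correct and applies equally to the paper's proof.
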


\begin{proof}
 Let $\CJ$ be an arbitrary open subset of $\CA$, and denote by $\CJ^{\sharp}$ and $\CJ^{\flat}$ the smallest $s$-invariant subset of $\CA$ containing $\CJ$ and the largest $s$-invariant  subset of $\CJ$. Both sets are open by Lemma \ref{lemma-propgenord2}. By the support condition, the kernel $\SM_{[\CJ^\sharp\setminus\CJ]}$ of the restriction homomorphism $\SM^{\CJ^\sharp}\to\SM^\CJ$ is $\CZ$-supported inside $\pi(\CJ^\sharp\setminus\CJ)\subset\pi(\CA^{+})$, and the kernel of  $\SM_{[\CJ\setminus\CJ^\flat]}$ of $\SM^{\CJ}\to\SM^{\CJ^\flat}$ is $\CZ$-supported inside $\pi(\CJ\setminus\CJ^\flat)\subset\pi(\CA^{-})$. By   Lemma \ref{lemma-propgenord2} these sets are disjoint. Hence the diagram $\SM^{\CJ^\sharp}\to\SM^\CJ\to\SM^{\CJ^\flat}$ identifies with the diagram $\SM^{\CJ^\sharp}\to[r_{\CJ^\sharp}^{\CJ^\flat}, \CA^{+}]\to\SM^{\CJ^\flat}$ constructed in Lemma \ref{lemma-subquotZ}. The analogous statement holds for $\SN$. The functoriality statement in Lemma \ref{lemma-subquotZ} yields the statement of the proposition. 
\end{proof}

\subsection{The construction of $\vartheta_s\SM$}
Let $\SM$ be a sheaf  on $\CA$ with values in $\CZ\catmod^f$.  Let $\CJ$ be an open subset. Let $\CJ^\flat\subset\CJ\subset\CJ^{\sharp}$ be as in the previous section.  The surjective homomorphism $r_{\CJ^\sharp}^{\CJ^\flat}\colon\SM^{\CJ^\sharp}\to\SM^{\CJ^\flat}$ induces a surjective homomorphism $\epsilon_s(r_{\CJ^\sharp}^{\CJ^\flat})\colon\epsilon_s(\SM^{\CJ^\sharp})\to\epsilon_s(\SM^{\CJ^\flat})$ of $\CZ$-modules. Using Lemma \ref{lemma-subquotZ} we construct  $(\vartheta_s\SM)^\CJ$ as the up to isomorphism unique object in $\CZ\catmod^f$ that fits into the diagram

\centerline{
\xymatrix{
\epsilon_s(\SM^{\CJ^\sharp})\ar@/_1pc/[rrrr]_{\epsilon_s(r_{\CJ^\sharp}^{\CJ^\flat})}\ar[rr]^{f_1}&&\vartheta_s\SM^{\CJ}\ar[rr]^{f_2}&&\epsilon_s(\SM^{\CJ^\flat})
}
}
\noindent
with surjective homomorphisms $f_1$ and $f_2$ and 
with $\ker f_1$ $\CZ$-supported inside $\pi(\CA^{+})$ and $\ker f_2$ supported inside $\pi(\CA^{-})$.
For an inclusion $\CJ^\prime\subset\CJ$ of open subsets we have $\CJ^{\prime\flat}\subset\CJ^\flat$ and $\CJ^{\prime\sharp}\subset\CJ^\sharp$. The corresponding restriction homomorphisms $\SM^{\CJ^\sharp}\to\SM^{\CJ^{\prime\sharp}}$ and $\SM^{\CJ^\flat}\to\SM^{\CJ^{\prime\flat}}$ induce homomorphisms  $\epsilon_s(\SM^{\CJ^\sharp})\to\epsilon_s(\SM^{\CJ^{\prime\sharp}})$ and $\epsilon_s(\SM^{\CJ^\flat})\to\epsilon_s(\SM^{\CJ^{\prime\flat}})$. By Lemma \ref{lemma-subquotZ}  there is a unique homomorphism $r_{\CJ}^{\CJ^\prime}$ such that the diagram 

\begin{equation}\label{diag1}
\begin{gathered}
\xymatrix{
\epsilon_s(\SM^{\CJ^{\sharp}})\ar[d]
\ar[r]^{f_1}&(\vartheta_s\SM)^\CJ\ar[d]^{r_{\CJ}^{\CJ^\prime}}\ar[r]^{f_2}&\epsilon_s(\SM^{\CJ^{\flat}})\ar[d]
\\
\epsilon_s(\SM^{\CJ^{\prime\sharp}})\ar[r]^{f_1^\prime}&(\vartheta_s\SM)^{\CJ^\prime}\ar[r]^{f_2^\prime}&\epsilon_s(\SM^{\CJ^{\prime\flat}})
}
\end{gathered}
\end{equation}
\noindent
commutes. 
For a nested inclusion $\CJ^{\prime\prime}\subset\CJ^{\prime}\subset\CJ$ the uniqueness statement in Lemma \ref{lemma-subquotZ} implies 
$r^{\CJ^{\prime\prime}}_{\CJ^{\prime}}\circ r^{\CJ^{\prime}}_{\CJ}=r^{\CJ^{\prime\prime}}_{\CJ}$.
Hence we constructed a presheaf $\vartheta_s\SM$  on $\CA$ with values in $\CZ\catmod^f$. Clearly this construction is functorial, so $\vartheta_s$ is a functor from $\bC$ to the category of presheaves  $\CA$ with values in $\CZ\catmod^f$. We now want to show that $\vartheta_s\SM$ is an object in the category $\bC$ if $\SM$ is. Let's start with some easy to deduce properties.

\begin{lemma}\label{lemma-firstprop} Suppose $\SM$ is a flabby, finitely supported sheaf on $\CA$ with values in $\CZ\catmod^f$. Then the following holds.
\begin{enumerate}
\item  $\vartheta_s\SM$ is a flabby presheaf.
\item Let $\CJ^\prime\subset\CJ$ be  open subsets of $\CA$ with $\CJ^\prime\cap (\supp_{\preceq}\SM)^\sharp=\CJ\cap (\supp_{\preceq}\SM)^\sharp$. Then the restriction homomorphism $(\vartheta_s\SM)^\CJ\to(\vartheta_s\SM)^{\CJ^\prime}$ is an isomorphism.
\end{enumerate}
\end{lemma}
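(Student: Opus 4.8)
The plan is to reduce both statements to the corresponding facts about the sheaf $\SM$ itself, exploiting that $(\vartheta_s\SM)^\CJ$ is built from $\epsilon_s(\SM^{\CJ^\sharp})$ and $\epsilon_s(\SM^{\CJ^\flat})$ via the factorization of Lemma \ref{lemma-subquotZ}, and that $\epsilon_s$ is exact and preserves the surjectivity of restriction maps. For part (1), I would fix an inclusion $\CJ^\prime\subset\CJ$ of open subsets and argue that $r^{\CJ^\prime}_\CJ\colon(\vartheta_s\SM)^\CJ\to(\vartheta_s\SM)^{\CJ^\prime}$ is surjective. Since $\SM$ is flabby, the restriction maps $\SM^{\CJ^\sharp}\to\SM^{\CJ^{\prime\sharp}}$ and $\SM^{\CJ^\flat}\to\SM^{\CJ^{\prime\flat}}$ are surjective (here I use that $\CJ^\sharp,\CJ^\flat,\CJ^{\prime\sharp},\CJ^{\prime\flat}$ are all open by Lemma \ref{lemma-propgenord2}, with $\CJ^{\prime\sharp}\subset\CJ^\sharp$ and $\CJ^{\prime\flat}\subset\CJ^\flat$), hence so are the maps obtained after applying the exact functor $\epsilon_s$. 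Now in diagram \eqref{diag1} the left vertical and right vertical are surjective, $f_1$ and $f_1^\prime$ are surjective; a short diagram chase (or an appeal to the uniqueness/functoriality in Lemma \ref{lemma-subquotZ}, which realizes $(\vartheta_s\SM)^{\CJ^\prime}$ as a canonical subquotient) shows $r^{\CJ^\prime}_\CJ$ is surjective. Concretely: given $y\in(\vartheta_s\SM)^{\CJ^\prime}$, lift it along $f_1^\prime$ to $\epsilon_s(\SM^{\CJ^{\prime\sharp}})$, lift that along the surjective $\epsilon_s(\SM^{\CJ^\sharp})\to\epsilon_s(\SM^{\CJ^{\prime\sharp}})$, and push forward by $f_1$; commutativity of the left square of \eqref{diag1} gives a preimage of $y$.

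For part (2), I would observe that the hypothesis $\CJ^\prime\cap(\supp_\preceq\SM)^\sharp=\CJ\cap(\supp_\preceq\SM)^\sharp$ controls both $\CJ^\sharp$ versus $\CJ^{\prime\sharp}$ and $\CJ^\flat$ versus $\CJ^{\prime\flat}$ relative to $\supp_\preceq\SM$. The point is that for any open $\CU$, the $\preceq$-support of $\SM^\CU$ (equivalently the set of $A$ with $\SM_{[A]}\ne0$ lying in $\CU$) depends only on $\CU\cap\supp_\preceq\SM$, and the support of $\SM^{\CU^\sharp}$ on $\SM^{\CU^\flat}$-type data is governed by $\CU^\sharp\cap\supp_\preceq\SM$ and $\CU^\flat\cap\supp_\preceq\SM$. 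One checks $\CJ^{\prime\sharp}\cap\supp_\preceq\SM=\CJ^\sharp\cap\supp_\preceq\SM$ and $\CJ^{\prime\flat}\cap\supp_\preceq\SM=\CJ^\flat\cap\supp_\preceq\SM$ directly from the hypothesis and the definitions of $(\cdot)^\sharp,(\cdot)^\flat$; then Lemma \ref{lemma-fin} applied to $\SM$ shows $\SM^{\CJ^\sharp}\to\SM^{\CJ^{\prime\sharp}}$ and $\SM^{\CJ^\flat}\to\SM^{\CJ^{\prime\flat}}$ are isomorphisms. Applying $\epsilon_s$ (which is exact, hence preserves isomorphisms) and invoking the uniqueness part of Lemma \ref{lemma-subquotZ} forces $r^{\CJ^\prime}_\CJ\colon(\vartheta_s\SM)^\CJ\to(\vartheta_s\SM)^{\CJ^\prime}$ to be an isomorphism, since both sides are the canonical object fitting between the same pair $\epsilon_s(\SM^{\CJ^\sharp})\twoheadrightarrow\epsilon_s(\SM^{\CJ^\flat})$, with kernels of the two surjections supported in the (by Lemma \ref{lemma-propgenord2}) disjoint sets $\pi(\CA^+)$ and $\pi(\CA^-)$.

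The main obstacle I anticipate is the bookkeeping in part (2): one must be careful that the hypothesis on $(\supp_\preceq\SM)^\sharp$ really does yield equality of \emph{both} $\CJ^\sharp\cap\supp_\preceq\SM=\CJ^{\prime\sharp}\cap\supp_\preceq\SM$ and $\CJ^\flat\cap\supp_\preceq\SM=\CJ^{\prime\flat}\cap\supp_\preceq\SM$, not merely one of them. The inclusion $(\supp_\preceq\SM)^\sharp\supset\supp_\preceq\SM$ together with $s$-invariance of $(\supp_\preceq\SM)^\sharp$ is what makes this work: if $A\in\CJ\cap\supp_\preceq\SM$ then $A\in(\supp_\preceq\SM)^\sharp$, so $A\in\CJ^\prime$ by hypothesis, and similarly $As\in\CJ^\prime$ whenever $As\in\CJ^\sharp\cap\supp_\preceq\SM$ (since $As\in(\supp_\preceq\SM)^\sharp$ too). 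Once this combinatorial input is nailed down, the rest is a formal consequence of exactness of $\epsilon_s$, flabbiness of $\SM$, and the functoriality/uniqueness already recorded in Lemmas \ref{lemma-subquotZ}, \ref{lemma-fin} and \ref{lemma-propgenord2}.
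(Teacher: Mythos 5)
Your proposal matches the paper's argument essentially step for step: part (1) via flabbiness of $\SM$, exactness of $\epsilon_s$, and a chase through diagram \eqref{diag1}; part (2) via the combinatorial identities $\CJ^{\prime\sharp}\cap\supp_{\preceq}\SM=\CJ^\sharp\cap\supp_{\preceq}\SM$ and $\CJ^{\prime\flat}\cap\supp_{\preceq}\SM=\CJ^\flat\cap\supp_{\preceq}\SM$, Lemma~\ref{lemma-fin}, and the uniqueness in Lemma~\ref{lemma-subquotZ}. The ``bookkeeping'' you flag as a potential obstacle is handled exactly as you describe, so the proof is correct and there is no real divergence from the paper.
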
 
\begin{proof} 
We prove (1). Let $\CJ^\prime\subset\CJ$ be an inclusion of open subsets of $\CA$. Then the left vertical homomorphism in the diagram (\ref{diag1}) is surjective by the exactness of $\epsilon_s$ and the flabbiness of $\SM$. As $f_1^\prime$ is surjective, so is $r_\CJ^{\CJ^\prime}$.  Hence (1). 

Let us now prove (2). Note that $(\CJ^{\prime})^{\sharp}\subset\CJ^\sharp$ and $(\CJ^{\prime})^{\sharp}\cap (\supp_{\preceq}\SM)^\sharp= \CJ^\sharp\cap (\supp_{\preceq}\SM)^\sharp$.  Similarly, $(\CJ^{\prime})^{\flat}\subset\CJ^\flat$ and $(\CJ^{\prime})^{\flat}\cap (\supp_{\preceq}\SM)^\sharp= \CJ^\flat\cap (\supp_{\preceq}\SM)^\sharp$. By Lemma \ref{lemma-fin} the restriction homomorphisms $\SM^{\CJ^{\sharp}}\to\SM^{(\CJ^\prime)^{\sharp}}$ and  $\SM^{\CJ^{\flat}}\to\SM^{(\CJ^\prime)^{\flat}}$ are  isomorphisms. In the diagram (\ref{diag1}) the vertical homomorphisms on the left and on the right are hence isomorphisms.  The uniqueness statement in Lemma \ref{lemma-subquotZ} then shows that also the middle vertical homomorphism is an isomorphism as well. 
 \end{proof}

The next goal is to show that $\vartheta_s\SM$ is a sheaf for any object $\SM$ in $\bC$. The following is a local version of the sheaf property.


\begin{lemma} \label{lemma-singstr} Let $\alpha\in R^+$. Suppose  that $\SM$ is a finitely supported flabby sheaf on $\CA$ with values in $\CZ^{\alpha}\catmod^f$. Suppose that $\supp_{\preceq}\SM$ is contained in  a single $\alpha$-string $\Lambda$. 
\begin{enumerate}
\item Then $\vartheta_s\SM$ is a sheaf on $\CA$ with values in $\CZ^{\alpha}\catmod^f$ and $\supp_{\preceq}\vartheta_s\SM\subset(\supp_{\preceq}\SM)^\sharp$. 
\item If $\Lambda^\sharp$ is not totally ordered, then $\vartheta_s\SM=\SN_1\oplus\SN_2$, where $\SN_1$ and $\SN_2$ are sheaves of $\CZ^{\alpha}$-modules with $\supp_{\preceq}\SN_1\subset\supp_{\preceq}\SM$ and $\supp_{\preceq}\SN_2\subset(\supp_{\preceq}\SM)s$.
\end{enumerate}
\end{lemma}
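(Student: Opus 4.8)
The plan is to split according to whether $\Lambda^\sharp=\Lambda\cup\Lambda s$ is totally ordered. Write $\CT:=\supp_{\preceq}\SM$; since $\SM$ is finitely supported this is a finite subset of the totally ordered $\alpha$-string $\Lambda$, so $\CT^\sharp=\CT\cup\CT s=(\supp_{\preceq}\SM)^\sharp$ is finite and contained in $\Lambda^\sharp$. By Lemma \ref{lemma-firstprop}, $\vartheta_s\SM$ is a flabby presheaf with $(\vartheta_s\SM)^{\emptyset}=0$, and for every open $\CJ$ the restriction $(\vartheta_s\SM)^\CJ\to(\vartheta_s\SM)^{(\CJ\cap\CT^\sharp)^-}$ is an isomorphism (the hypothesis of Lemma \ref{lemma-firstprop}(2) holds because $(\CJ\cap\CT^\sharp)^-\subseteq\CJ$), and each $(\vartheta_s\SM)^\CJ$ lies in $\CZ^{\alpha}\catmod^f$ because the construction only involves $\epsilon_s$ and the subquotient operation of Lemma \ref{lemma-subquotZ}. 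If $\Lambda^\sharp$ is totally ordered, these observations, applied to the presheaf $\vartheta_s\SM$ and the finite totally ordered set $\CT^\sharp$, are exactly the hypotheses of Lemma \ref{lemma-totord}; hence $\vartheta_s\SM$ is a sheaf with $\supp_{\preceq}\vartheta_s\SM\subseteq\CT^\sharp=(\supp_{\preceq}\SM)^\sharp$, which is (1), and (2) is vacuous.

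So assume $\Lambda^\sharp$ is not totally ordered, and let $\Gamma_1\in\CA^{\alpha}$ be the $\hCW^{\alpha}$-orbit containing $\Lambda$ (each $\alpha$-string lies in one orbit) and $\Gamma_2$ that of $\Lambda s$. The decisive step is to show $\Gamma_1\neq\Gamma_2$. Because left multiplication by $\hCW$ commutes with right multiplication by $s$, $\Gamma_2=\Gamma_1 s$; and since the reflections lying in $\hCW^{\alpha}$ are exactly the $s_{\alpha,n}$, the equality $\Gamma_1 s=\Gamma_1$ would force the wall separating some alcove of $\Gamma_1$ from its $s$-translate to be an $\alpha$-hyperplane, which — as $\hCW^{\alpha}$ permutes $\alpha$-strings — would then hold for all alcoves of $\Gamma_1$, giving $\Lambda s=\Lambda$ and hence $\Lambda^\sharp=\Lambda$ totally ordered, contrary to assumption. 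Next, using that $\SM$ satisfies the support condition, $\SM^{\CJ^\sharp}$ is a finite extension of modules $\SM_{[A]}$ with $A\in\Lambda$, each $\CZ^{\alpha}$-supported in $\{\Gamma_1\}$, so $\SM^{\CJ^\sharp}$ is too; by Lemma \ref{lemma-lstrucfree}(2) (in its $\alpha$-localized form) $\epsilon_s(\SM^{\CJ^\sharp})$, hence also its subquotient $(\vartheta_s\SM)^\CJ$, is $\CZ^{\alpha}$-supported inside $\{\Gamma_1,\Gamma_1 s\}=\{\Gamma_1,\Gamma_2\}$. As $\Gamma_1\neq\Gamma_2$, the canonical orbit decomposition of Section \ref{subsec-candec} splits $(\vartheta_s\SM)^\CJ$ into its $\Gamma_1$- and $\Gamma_2$-parts, and by functoriality this assembles into a decomposition of presheaves $\vartheta_s\SM=\SN_1\oplus\SN_2$ with each $\SN_i$ flabby, finitely supported and $\CZ^{\alpha}$-supported in $\{\Gamma_i\}$.

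It then remains to identify the summands. I would invoke Lemma \ref{lemma-alphastr} to reduce, by the symmetry of (2) under $\Lambda\leftrightarrow\Lambda s$, to the case that $\Lambda$ consists only of $s$-antidominant alcoves; then $\SM$ is $\preceq$-supported among the $s$-antidominant alcoves, so $\SM^{\CJ^\sharp}\cong\SM^\CJ$ and $\ker(\SM^{\CJ}\to\SM^{\CJ^\flat})$ is again $\preceq$-supported among $s$-antidominant alcoves, for every open $\CJ$. Feeding this through the construction and projecting onto the $\Gamma_1$-part — using that, for a $\CZ^{\alpha}$-module $N$ supported in $\{\Gamma_1\}$, the $\Gamma_1$-component of $\epsilon_s(N)$ is canonically $N[1]$ — one obtains $\SN_1\cong\SM[1]$, which is a sheaf because $\SM$ is, with $\supp_{\preceq}\SN_1=\CT$; and the $\Gamma_2$-component exhibits $\SN_2$ as the corresponding $s$-twisted shift of $\SM$, a sheaf $\preceq$-supported in $\Lambda s$, with $\supp_{\preceq}\SN_2\subseteq\CT s$. (Equivalently, once these $\preceq$-supports are known, each $\SN_i$ is flabby, finitely supported and $\preceq$-supported in a single $\alpha$-string, hence totally ordered, so Lemma \ref{lemma-totord} applies with the restriction isomorphisms inherited from those of $\vartheta_s\SM$.) In either case $\vartheta_s\SM=\SN_1\oplus\SN_2$ is a sheaf in $\CZ^{\alpha}\catmod^f$ with $\supp_{\preceq}\vartheta_s\SM\subseteq\CT^\sharp=(\supp_{\preceq}\SM)^\sharp$, proving (2) and the remaining case of (1).

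\textbf{Main obstacle.} The conceptual crux is the middle paragraph: recognizing that \emph{failure} of total order of $\Lambda^\sharp$ is precisely what pushes $\Lambda$ and $\Lambda s$ into distinct $\hCW^{\alpha}$-orbits, so that the splitting in (2) is produced for free by the canonical $\CA^{\alpha}$-decomposition and one never has to dissect the internal structure of $\epsilon_s$ by hand. The remaining effort — pinning down the $\preceq$-supports of $\SN_1,\SN_2$ and checking that they are honest sheaves rather than mere presheaves — is bookkeeping with Lemmas \ref{lemma-totord}, \ref{lemma-alphastr}, \ref{lemma-subquotZ} and the support condition.
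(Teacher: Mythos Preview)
Your overall strategy matches the paper's exactly: the same case split on whether $\Lambda^\sharp$ is totally ordered, the same use of Lemma~\ref{lemma-firstprop} together with Lemma~\ref{lemma-totord} in the totally ordered case, and the same key insight that $\Lambda^\sharp$ not totally ordered forces $\Gamma_1\neq\Gamma_2$ so that the canonical $\CA^\alpha$-decomposition produces the splitting $\SN_1\oplus\SN_2$ for free. Your geometric argument for $\Gamma_1\neq\Gamma_2$ is correct and equivalent to the paper's terse ``$\Lambda\neq\Lambda s$ implies $\Gamma_1\neq\Gamma_2$''.

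Where you diverge is in the last step. You attempt to identify $\SN_1\cong\SM[1]$ explicitly by computing the $\Gamma_1$-component of $\epsilon_s$. This identification is actually correct (and is a pleasant shortcut for $\SN_1$), but your treatment of $\SN_2$ is incomplete: the assertion that $\SN_2$ is ``the corresponding $s$-twisted shift of $\SM$'' and hence a sheaf is not justified --- the assignment $\CJ\mapsto\SM^{\CJ^\flat}$ (which is what you obtain) is not obviously a sheaf, and your parenthetical fallback presupposes the very restriction isomorphisms that Lemma~\ref{lemma-totord} requires as input. The paper closes this gap differently: it does not try to identify $\SN_i$ with anything explicit, but instead uses the built-in $\CZ$-support constraints of the $[f,\pi(\CA^+)]$-construction (Lemma~\ref{lemma-subquotZ}) to show directly that the restrictions $\SN_1^{\CJ}\to\SN_1^{\CJ^\flat}$ and $\SN_2^{\CJ^\sharp}\to\SN_2^{\CJ}$ are isomorphisms (since the relevant kernels are $\CZ$-supported in $\pi(\CA^\mp)$ while $\SN_i$ is $\CZ$-supported in $\pi(\Gamma_i)\subset\pi(\CA^\pm)$). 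Combined with Lemma~\ref{lemma-firstprop}(2) this yields the hypotheses of Lemma~\ref{lemma-totord} for each $\SN_i$ separately. That is the piece of bookkeeping you are missing.

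One further remark: you invoke ``the support condition'' on $\SM$, which is not among the stated hypotheses of the lemma. The paper's proof also tacitly needs that $(\vartheta_s\SM)^\alpha$ has only the two components $\SN_1,\SN_2$, which amounts to the same thing; in the intended application (to $\alpha$-localizations of objects of $\bC$) this holds, and one can also reduce to it by first decomposing $\SM$ itself along $\CA^\alpha$. So this is a shared imprecision rather than a flaw specific to your argument.
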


\begin{proof} For notational convenience, set $\CT:=(\supp_{\preceq}\SM)^\sharp$. First suppose that $\Lambda^\sharp=\Lambda\cup\Lambda s$ is totally ordered. Let $\CJ$ be an open subset of $\CA$,  and let $\CJ^\prime$ be the smallest open subset  containing $\CJ\cap \CT$. Then  the restriction homomorphism $(\vartheta_s\SM)^\CJ\to(\vartheta_s\SM)^{\CJ^\prime}$ is an isomorphism by Lemma \ref{lemma-firstprop}. Now $\CT$ is  finite and totally ordered. By Lemma \ref{lemma-totord}, $\vartheta_s\SM$ is hence a sheaf supported inside $\CT$. 

Now assume  that $\Lambda^\sharp$ is not totally ordered. This implies $\Lambda\ne\Lambda s$. By Lemma \ref{lemma-alphastr}, either $\Lambda$ contains only $s$-dominant elements, or only $s$-antidominant elements. We assume that $\Lambda$ contains only $s$-dominant elements. The other case is proven with analogous arguments. 

Note that since $\Lambda\ne\Lambda s$,   the  $\hCW^{\alpha}$-orbits $\Gamma_1$ through $\Lambda$ and $\Gamma_2$ through $\Lambda s$  are disjoint. By what we observed in Section \ref{subsec-candec}, there is hence a canonical decomposition $(\vartheta_s\SM)^\alpha=\SN_1\oplus\SN_2$ of presheaves of $\CZ^\alpha$-modules such that $\SN_i$ is a presheaf of $\CZ^{\alpha}(\Gamma_i)$-modules for $i=1,2$. 

 Let $\CJ$ be an open subset. We first prove the following:
 \begin{enumerate}
 \item The restriction homomorphisms $\SN_1^\CJ\to\SN_1^{\CJ^{\flat}}$ and $\SN_2^{\CJ^{\sharp}}\to\SN_2^\CJ$ are isomorphisms.
 \end{enumerate}
 As $\vartheta_s\SM$, and hence $\SN_1$ and $\SN_2$ are flabby presheaves, the restriction homomorphisms are surjective. By construction, the kernel of $(\vartheta_s\SM)^{\CJ}\to(\vartheta_s\SM)^{\CJ^\flat}$ is $\CZ$-supported inside $\pi(\CA^{-})$. As $\SN_1$ is a sheaf of $\CZ^{\alpha}(\Gamma_1)$-modules and since $\pi(\Gamma_1)\subset\pi(\CA^{+})$, the kernel of $\SN_1^{\CJ}\to\SN_1^{\CJ^{\flat}}$ is hence trivial. So the restriction is injective. Similarly, the kernel of $(\vartheta_s\SM)^{\CJ^\sharp}\to(\vartheta_s\SM)^{\CJ}$ is $\CZ$-supported inside $\pi(\CA^{+})$. As $\SN_2^{\CJ^{\flat}}$ is $\CZ$-supported on $\pi(\Gamma_2)\subset\pi(\CA^{-})$, the restriction $\SN_2^{\CJ^\sharp}\to\SN_2^\CJ$ is injective. Hence the claim is proven.
 
Now let $\CJ_1\subset\CJ$ be an  open subset with $\CJ_1\cap\supp_{\preceq}\SM=\CJ\cap\supp_{\preceq}\SM$. As $\supp_{\preceq}\SM$ contains only $s$-dominant elements, even $\CJ_1\cap(\supp_{\preceq}\SM)^\sharp=\CJ\cap(\supp_{\preceq}\SM)^\sharp$ and hence $\CJ_1^\flat\cap(\supp_{\preceq}\SM)^\sharp=\CJ^\flat\cap(\supp_{\preceq}\SM)^\sharp$. Consider now the diagram

\centerline{
\xymatrix{
\SN_1^{\CJ}\ar[r]\ar[d]&\SN_1^{\CJ^\flat}\ar[d]\\
\SN_1^{\CJ_1}\ar[r]&\SN_1^{\CJ_1^\flat}
}
}
\noindent 
(the maps are the restriction homomorphisms). The horizontals are isomorphisms by the claim above. It follows from Lemma \ref{lemma-firstprop} that the right vertical is an isomorphism (as $ \CJ_1^\flat\cap(\supp_{\preceq}\SM)^\sharp=\CJ^\flat\cap(\supp_{\preceq}\SM)^\sharp$). Hence the left vertical is an isomorphism. Now  it follows from Lemma \ref{lemma-totord} that $\SN_1$ is a sheaf supported inside $\supp_{\preceq}\SM$.

Now let $\CJ_2\subset\CJ$ be an open subset with $\CJ_2\cap(\supp_{\preceq}\SM)s=\CJ\cap(\supp_{\preceq}\SM)s$. Then $\CJ_2^\sharp\cap(\supp_{\preceq}\SM)^\sharp=\CJ^\sharp\cap(\supp_{\preceq}\SM)^\sharp$.

Consider the diagram

\centerline{
\xymatrix{
\SN_2^{\CJ^\sharp}\ar[r]\ar[d]&\SN_2^{\CJ}\ar[d]\\
\SN_2^{\CJ_2^\sharp}\ar[r]&\SN_2^{\CJ_2}
}
}
\noindent 
(the maps are the restriction homomorphisms). As before we show that  the horizontals and the right vertical are isomorphisms, hence so is the left vertical.  As before we deduce that  $\SN_2$ is a sheaf supported inside $(\supp_{\preceq}\SM)s$.\end{proof}

Now we can prove that $\vartheta_s$ indeed has all the properties listed in Theorem \ref{thm-main}.  
\begin{proposition}\label{prop-propwc} Let $\SM$ be an object in $\bC$. Then the following holds.
\begin{enumerate}
\item  $\vartheta_s\SM$ is  an object in $\bC$.
\item  For any locally closed $s$-invariant subset $\CK\subset\CA$  there is a functorial identification
$
(\vartheta_s\SM)_{[\CK]}\cong \epsilon_s(\SM_{[\CK]})
$
of $\CZ$-modules.
\item For $A\in\CA$ with $A\preceq As$ we have functorial isomorphisms
$(\vartheta_s\SM)_{[A]}\cong \SM_{[A,As]}[1]$ and $(\vartheta_s\SM)_{[As]}\cong \SM_{[A,As]}[-1]$.
\item The functor $\vartheta_s\colon\bC\to\bC$ is  exact. 
\item The functor $\vartheta_s$ preserves the subcategory $\bB$.
\item The functor $\vartheta_s\colon \bC\to\bC$ is self-adjoint.
\end{enumerate}
\end{proposition}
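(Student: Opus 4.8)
The plan is to build on the explicit construction of $\vartheta_s\SM$ and on Lemmas~\ref{lemma-firstprop} and~\ref{lemma-singstr}, proving the six assertions in the order (1),(2),(3),(4),(5),(6) and using (2) and (3) as the main engine for the rest. For (1): Lemma~\ref{lemma-firstprop} already yields that $\vartheta_s\SM$ is a flabby presheaf with values in $\CZ\catmod^f$, and part~(2) of that lemma together with the finiteness of $(\supp_{\preceq}\SM)^\sharp$ shows that $\vartheta_s\SM$ is finitely supported, with $\supp_{\preceq}\vartheta_s\SM\subseteq(\supp_{\preceq}\SM)^\sharp$. To see that $\vartheta_s\SM$ is a sheaf satisfying the local extension condition I would invoke Lemma~\ref{lemma-locsheaf}(2) and check this for each $(\vartheta_s\SM)^\alpha$, $\alpha\in R^+$. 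The key point is that the construction of $\vartheta_s$ commutes with localization at $\alpha$: $(\cdot)^\alpha$ is exact, commutes with $\epsilon_s=\CZ\otimes_{\CZ^s}\cdot[1]$ by flatness, and preserves the $\CZ$-supports controlling the subquotients in Lemma~\ref{lemma-subquotZ}, so the defining diagrams for $\vartheta_s\SM$ localize. Then, using the local extension condition for $\SM$, one writes $\SM^\alpha$ as a direct sum of sheaves each $\preceq$-supported in a single $\alpha$-string, applies $\vartheta_s$ summandwise, and appeals to Lemma~\ref{lemma-singstr}: each summand becomes a sheaf, and the further splitting of Lemma~\ref{lemma-singstr}(2) along the two $\hCW^\alpha$-orbits through $\Lambda$ and $\Lambda s$ (when $\Lambda^\sharp$ is not totally ordered) produces a decomposition of $(\vartheta_s\SM)^\alpha$ into sheaves each supported in a single $\alpha$-string. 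The support condition I would obtain by computing $(\vartheta_s\SM)_{[A]}$ directly from the construction: a neighbourhood analysis with $s$-invariant opens identifies $(\vartheta_s\SM)_{[A]}$ with the $\CZ$-isotypic component at $\pi(A)$ of $\epsilon_s(\SM_{[\CK]})$ for a suitable $s$-stable locally closed $\CK$ containing $A$, which is $\CZ$-supported in $\{\pi(A)\}$; by Remark~\ref{rem-suppcond} this is precisely the support condition.

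Part~(2) is then formal: for $s$-invariant locally closed $\CK$, both $\CK^-$ and $\CK^-\setminus\CK$ are $s$-invariant open, so $(\CK^-)^\sharp=(\CK^-)^\flat=\CK^-$ and likewise for $\CK^-\setminus\CK$; the construction gives $(\vartheta_s\SM)^{\CK^-}\cong\epsilon_s(\SM^{\CK^-})$ and $(\vartheta_s\SM)^{\CK^-\setminus\CK}\cong\epsilon_s(\SM^{\CK^-\setminus\CK})$ compatibly with restriction, and exactness of $\epsilon_s$ gives $(\vartheta_s\SM)_{[\CK]}=\ker\bigl((\vartheta_s\SM)^{\CK^-}\to(\vartheta_s\SM)^{\CK^-\setminus\CK}\bigr)\cong\epsilon_s(\SM_{[\CK]})$. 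For (3), I would apply (2) to $\CK=\{A,As\}$: this set is its own image under $s$, hence $s$-invariant, and it is locally closed because $\{A,As\}$ is an interval by Lemma~\ref{lemma-propgenord1}(1) (applied to whichever of $A$ and $As$ is $s$-dominant). Thus $(\vartheta_s\SM)_{[\{A,As\}]}\cong\epsilon_s(\SM_{[A,As]})$; decomposing the right-hand side along the $s$-orbit $\{\pi(A),\pi(As)\}$ in $\CA^\emptyset$ and matching the two pieces with the two steps $(\vartheta_s\SM)_{[As]}\hookrightarrow(\vartheta_s\SM)_{[\{A,As\}]}\twoheadrightarrow(\vartheta_s\SM)_{[A]}$ of the cofiltration of Remark~\ref{rem-subquot}(2) yields the two isomorphisms, the grading shifts $[1]$ and $[-1]$ coming from $\CZ=\CZ^s\oplus\CZ^{-s}$ with $\CZ^{-s}\cong\CZ^s[-2]$ (Lemma~\ref{lemma-lstrucfree}(1)) inside $\epsilon_s=\CZ\otimes_{\CZ^s}\cdot[1]$.

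For (4), I would first note that for any short exact sequence $0\to\SA\to\SB\to\SC\to0$ in $\bC$ and any locally closed $\CK$, the sequence $0\to\SA_{[\CK]}\to\SB_{[\CK]}\to\SC_{[\CK]}\to0$ is exact: by Lemma~\ref{lemma-ses} the sequences on $\SM^{\CK^-}$ and on $\SM^{\CK^-\setminus\CK}$ are exact, and flabbiness of $\SA$ makes $\SA^{\CK^-}\to\SA^{\CK^-\setminus\CK}$ surjective, so the snake lemma applied to the two rows gives exactness of the kernels. Combined with (3), the functor $\SM\mapsto(\vartheta_s\SM)_{[A]}$, which is $\SM_{[A,As]}$ up to a grading shift, is exact for every $A$, so $\vartheta_s$ is exact by Lemma~\ref{lemma-ses}(4). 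For (5), by Lemma~\ref{lemma-Vermasubquot} it suffices to show that $(\vartheta_s\SM)_{[A]}$ is graded free whenever $\SM\in\bB$; by (3) it equals $\SM_{[A,As]}$ up to shift, and $\SM_{[A,As]}$ sits in a short exact sequence with outer terms $\SM_{[A]}$ and $\SM_{[As]}$ (Remark~\ref{rem-subquot}(2)), both graded free, so the sequence splits (graded free modules are projective among graded $S$-modules) and $\SM_{[A,As]}$ is graded free.

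For (6), the input is that $\CZ^s\subseteq\CZ$ is a Frobenius-type extension --- $\CZ$ is free of rank two over $\CZ^s$ with $\CZ^{-s}\cong\CZ^s[-2]$ by Lemma~\ref{lemma-lstrucfree}(1) and \cite{FieTAMS} --- so that induction along $\CZ^s\subseteq\CZ$ agrees with coinduction up to a shift and $\epsilon_s$ is self-adjoint on $\CZ\catmod$, that is, $\Hom_\CZ(\epsilon_s M,N)\cong\Hom_\CZ(M,\epsilon_s N)$ bifunctorially. By Proposition~\ref{prop-uniquetrans} the natural map $\Hom_\bC(\SM,\SN)\to\varprojlim_\CJ\Hom_\CZ(\SM^\CJ,\SN^\CJ)$, the limit over $s$-invariant open $\CJ$, is a bijection; since $(\vartheta_s\SM)^\CJ\cong\epsilon_s(\SM^\CJ)$ for such $\CJ$ by Theorem~\ref{thm-main}(1), naturality of all these identifications gives $\Hom_\bC(\vartheta_s\SM,\SN)\cong\varprojlim_\CJ\Hom_\CZ(\epsilon_s(\SM^\CJ),\SN^\CJ)\cong\varprojlim_\CJ\Hom_\CZ(\SM^\CJ,\epsilon_s(\SN^\CJ))\cong\Hom_\bC(\SM,\vartheta_s\SN)$. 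The hard part will be (1): verifying that $\vartheta_s\SM$ is a sheaf and meets the local extension and support conditions is where the interaction of localization, the $\alpha$-string structure, and Lemma~\ref{lemma-singstr} (especially the non-totally-ordered case) must be handled with care, and where the explicit description of the stalks $(\vartheta_s\SM)_{[A]}$ --- which also underlies (3) --- has to be extracted from the construction.
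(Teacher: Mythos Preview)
Your proposal is correct and follows essentially the same route as the paper: reduce the sheaf property and the local extension condition to the $\alpha$-localized situation via Lemma~\ref{lemma-locsheaf}(2) and Lemma~\ref{lemma-singstr}, read off (2) from the identification $(\vartheta_s\SM)^{\CJ}\cong\epsilon_s(\SM^{\CJ})$ on $s$-invariant opens, deduce (3) from (2) applied to $\{A,As\}$, and obtain (4), (5), (6) from (3) together with Lemma~\ref{lemma-ses}, Lemma~\ref{lemma-Vermasubquot}, and Proposition~\ref{prop-uniquetrans} respectively. The only organisational difference is that the paper does not complete (1) before (2) and (3): it establishes the sheaf property and the local extension condition first, then proves (2) and (3), and only then derives the support condition from the explicit description of $(\vartheta_s\SM)_{[A]}$ in (3) via Remark~\ref{rem-suppcond}---which is exactly the ``neighbourhood analysis'' you sketch, so your argument for the support condition is really (2)+(3) in disguise and should be placed after them.
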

\begin{proof}
We first show that $\vartheta_s\SM$ is a sheaf. By Lemma \ref{lemma-locsheaf} it suffices to prove that for any $\alpha\in R^+$ the presheaf $(\vartheta_s\SM)^\alpha$ is a sheaf of $\CZ^{\alpha}$-modules on $\CA$. So fix $\alpha\in R^+$. Then $(\vartheta_s\SM)^\alpha$ can be identified with $\vartheta_s(\SM^{\alpha})$. As  $\SM$ satisfies the local extension condition, $\SM^{\alpha}$ splits into a direct sum of sheaves in such a way that each direct summand is supported inside a single $\alpha$-string. This allows us to reduce the statement to the case that $\SM$ is a sheaf of $\CZ^{\alpha}$-modules that is $\preceq$-supported inside a single $\alpha$-string. This case is settled in Lemma \ref{lemma-singstr}. This lemma also implies that $\vartheta_s\SM$ satisfies the local extension condition. By construction it is clear that $\vartheta_s\SM$ is finitely supported, its sections are objects in $\CZ\catmod^f$ and it is flabby by Lemma \ref{lemma-firstprop}. Hence in order to prove statement (1) it remains to show that $\vartheta_s\SM$ satisfies the support condition.

But we first show statement (2). Since we now know that $\vartheta_s\SM$ is a sheaf, it makes sense to talk about the subquotient $(\vartheta_s\SM)_{[\CK]}$. 
 If $\CK$ is $s$-invariant, then $\CK^-$ and $\CK^-\setminus\CK$ are $s$-invariant as well. As we can identify the  restriction homomorphism $(\vartheta_s\SM)^{\CK^-}\to(\vartheta_s\SM)^{\CK^-\setminus\CK}$ with $\epsilon_s(\SM^{\CK^-})\to\epsilon_s(\SM^{\CK^-\setminus\CK})$ we have, using the exactness of $\epsilon_s$,  $(\vartheta_s\SM)_{[\CK]}\cong\epsilon_s\SM_{[\CK]}$ functorially, hence (2).

Now we show (3).
Let $A\in\CA$ be $s$-antidominant.  As $\{A,As\}$ is a locally closed $s$-invariant subset there is a functorial identification $(\vartheta_s\SM)_{[A,As]}\cong\epsilon_s(\SM_{[A,As]})$ by (2). In the short exact sequence
$$
0\to (\vartheta_s\SM)_{[As]}\to(\vartheta_s\SM)_{[A,As]}\to(\vartheta_s\SM)_{[A]}\to 0
$$
the left hand side homomorphism is the inclusion of the maximal submodule supported inside $\pi(A)$, so this short exact sequence identifies with
$$
0\to \CZ(A,As)_{[As]}\otimes_S \SM_{[A,As]}\to \CZ(A,As)\otimes_S \SM_{[A,As]}\to \CZ(A,As)^{A}\otimes_S\SM_{[A,As]}\to 0.
$$
shifted by $[1]$. 
Claim (3) now follows from the fact that $\CZ(A,As)_{[As]}=S[-2]$ and $\CZ(A,As)^{A}\cong S$. The above argument also implies that $(\vartheta_s\SM)_{[C]}$ is $\CZ$-supported inside $\pi(C)$ for any $C\in\CA$. Hence $\vartheta_s\SM$ satisfies the support condition by Remark \ref{rem-suppcond}. Hence we finished the proof of  (1).

Now (4) and (5) follow from (3) using  Lemma \ref{lemma-ses} and Lemma \ref{lemma-Vermasubquot}, resp.

Finally, let us prove that $\vartheta_s$ is self-adjoint.
By \cite[Proposition 5.2]{FieTAMS}   the functor $\epsilon_s=\CZ\otimes_{\CZ^s}\cdot[1]$  is self-adjoint  i.e. $\Hom_{\CZ}(\epsilon_s M,N)\cong\Hom_{\CZ}(M,\epsilon_s N)$ functorially on the level of $\CZ$-modules. Let $\CJ$ be an $s$-invariant open subset of $\CA$.
As $\vartheta_s(\CX)^\CJ\cong\epsilon_s(\CX^\CJ)$ functorially for any object $\CX$ of $\bC$, there is a functorial   isomorphism 
$$
\phi_\CJ\colon\Hom_{\CZ}((\vartheta_s \SM)^{\CJ}, \SN^\CJ)=\Hom_{\CZ}(\SM^\CJ,(\vartheta_s \SN)^{\CJ})
$$
for all $\SM$ and $\SN$ in $\bC$. Moreover, for an inclusion $\CJ^\prime\subset\CJ$ of $s$-invariant subsets we can identify the diagrams

\centerline{
\xymatrix{
(\vartheta_s \SM)^{\CJ}\ar[r]\ar[d]&\SN^\CJ\ar[d]\\
(\vartheta_s \SM)^{\CJ^\prime}\ar[r]&\SN^{\CJ^\prime}
}
\text{ and }
\xymatrix{
 \SM^{\CJ}\ar[r]\ar[d]&(\vartheta_s \SN)^{\CJ}\ar[d]\\
\SM^{\CJ^\prime}\ar[r]&(\vartheta_s \SN)^{\CJ^\prime}.
}
}
\noindent 
By Proposition \ref{prop-uniquetrans}  a homomorphism $\SA\to \SB$ in $\bC$ is uniquely determined by its components $\SA^\CJ\to \SB^\CJ$ for $s$-invariant open subsets $\CJ$, and conversely, a family $\SA^\CJ\to \SB^\CJ$ of homomorphisms of $\CZ$-modules for any $s$-invariant open subset $\CJ$ that is compatible with the restriction homomorphisms, determines a morphism $\SA\to \SB$ in $\bC$. Hence the above identification yield an isomorphism
$
\Hom_{\bC}(\vartheta_s \SM,\SN)=\Hom_{\bC}(\SM, \vartheta_s \SN).
$
\end{proof}

As $\vartheta_s$ is an exact self-adjoint functor by Proposition�\ref{prop-propwc} the following is immediate.
\begin{corollary} \label{cor-transproj} Suppose that $\bP\in\bC$ is projective. Then $\vartheta_s\bP\in\bC$ is projective as well.
\end{corollary}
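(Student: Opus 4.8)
The plan is to deduce this formally from the two properties of $\vartheta_s$ recorded in Proposition \ref{prop-propwc}, namely that $\vartheta_s$ is exact (part (4)) and self-adjoint (part (6)). This is the familiar fact that an exact functor which is its own adjoint preserves projective objects, and it goes through verbatim for the exact structure on $\bC$.

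Concretely, I would fix an epimorphism $f\colon\SM\sur\SN$ in $\bC$, say fitting into a short exact sequence $0\to\SA\to\SM\xrightarrow{f}\SN\to 0$ in $\bC$, and recall that $\vartheta_s\bP$ is projective once one knows that $f_\ast\colon\Hom_{\bC}(\vartheta_s\bP,\SM)\to\Hom_{\bC}(\vartheta_s\bP,\SN)$ is surjective for every such $f$. Applying the exact functor $\vartheta_s$ yields a short exact sequence $0\to\vartheta_s\SA\to\vartheta_s\SM\xrightarrow{\vartheta_s f}\vartheta_s\SN\to 0$ in $\bC$, so $\vartheta_s f$ is again an epimorphism in $\bC$. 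Invoking the self-adjunction isomorphisms of Proposition \ref{prop-propwc}(6) then produces a commuting square

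\centerline{
\xymatrix{
\Hom_{\bC}(\vartheta_s\bP,\SM)\ar[r]^{f_\ast}\ar[d]_{\wr}&\Hom_{\bC}(\vartheta_s\bP,\SN)\ar[d]^{\wr}\\
\Hom_{\bC}(\bP,\vartheta_s\SM)\ar[r]^{(\vartheta_s f)_\ast}&\Hom_{\bC}(\bP,\vartheta_s\SN)
}
}
\noindent
whose verticals are isomorphisms and which commutes because the self-adjunction is bifunctorial, here applied in the second variable to $f$. Since $\bP$ is projective and $\vartheta_s f$ is an epimorphism in $\bC$, the bottom arrow is surjective, hence so is $f_\ast$; as $f$ was arbitrary this gives the claim.

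I do not expect any real obstacle here. The only ingredients are the exactness of $\vartheta_s$ and its \emph{bifunctorial} self-adjointness, both already proved in Proposition \ref{prop-propwc}, together with the elementary fact that an exact functor preserves short exact sequences and hence epimorphisms in the exact sense. The single point one should be careful to check is the commutativity of the displayed square; this is immediate from the construction of the self-adjunction isomorphism in the proof of Proposition \ref{prop-propwc}(6), which is assembled from the bifunctorial adjunction for $\epsilon_s$ of \cite[Proposition 5.2]{FieTAMS} together with the identification of morphisms in $\bC$ with compatible families of $\CZ$-module maps over $s$-invariant open subsets furnished by Proposition \ref{prop-uniquetrans}.
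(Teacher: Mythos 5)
Your argument is exactly the standard deduction that the paper has in mind when it says the corollary ``is immediate'' from exactness and self-adjointness of $\vartheta_s$; the paper gives no further details, and your spelled-out version (including the observation that one needs the self-adjunction to be bifunctorial so the square commutes) is correct and matches the intended approach.
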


\subsection{Constructing  projectives via wall crossing functors}

\begin{theorem}\label{thm-projs} Let $A\in\CA$ and suppose that $A\in\Pi_\lambda$.
\begin{enumerate}
\item There is an up to isomorphism unique object $\SB(A)$ in $\bC$ with the following properties:
\begin{itemize}
\item $\SB(A)$ is indecomposable and projective in $\bC$.
\item $\SB(A)$ admits a surjective homomorphism $\SB(A)\to\SV(A)[\ell_\lambda(A)]$.
\end{itemize}
\item The object $\SB(A)$ admits a Verma flag.
\end{enumerate}
\end{theorem}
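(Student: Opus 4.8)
The starting point is the wall crossing algorithm based on Lemma \ref{lemma-specalc}: every alcove $A\in\Pi_\lambda$ can be written as $A=A_\lambda^- s_1\cdots s_n$ with $A\prec A_\lambda^- s_1\cdots s_{n-1}\prec\cdots\prec A_\lambda^-$, where the $s_i\in\hCS$. The plan is to start with the projective object $\ul{\CK_\lambda}$ (projective in $\bC$ by Proposition \ref{prop-Oproj}, with a Verma flag by Proposition \ref{prop-ZKinC}, indecomposable, and surjecting onto $\SV(A_\lambda^-)$), and to apply the wall crossing functors $\vartheta_{s_1},\dots,\vartheta_{s_n}$ successively. By Corollary \ref{cor-transproj} the resulting object $\vartheta_{s_n}\cdots\vartheta_{s_1}\ul{\CK_\lambda}$ is projective in $\bC$, and by Proposition \ref{prop-propwc}(5) it lies in $\bB$, i.e.\ admits a Verma flag; this will take care of part (2) once we pin down the right indecomposable summand. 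The object $\SB(A)$ will then be defined as a suitable indecomposable direct summand of this iterated wall crossing, and one shows any such summand automatically has a Verma flag too (direct summands of Verma-flag objects have a Verma flag, by Lemma \ref{lemma-Vermasubquot}, since $\SM_{[B]}$ of a summand is a graded direct summand of a graded free $S$-module, hence graded free).

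The heart of the argument is to track the effect of $\vartheta_s$ on Verma multiplicities via Proposition \ref{prop-propwc}(3): for $A\preceq As$ one has $(\vartheta_s\SM)_{[A]}\cong\SM_{[A,As]}[1]$ and $(\vartheta_s\SM)_{[As]}\cong\SM_{[A,As]}[-1]$. Combining this with Lemma \ref{lemma-propgenord1} (the interval property, and the behaviour of $\preceq$ under the right $s$-action) one shows inductively that, after applying $\vartheta_{s_1},\dots,\vartheta_{s_i}$, the support of the resulting sheaf is contained in $\{\preceq A_\lambda^- s_1\cdots s_i\}$ and that the multiplicity $[\SM:\SV(A_\lambda^- s_1\cdots s_i)]$ — suitably graded-shifted — is $1$. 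Here the precise bookkeeping of the grading shift is what produces the $[\ell_\lambda(A)]$ in the statement: using that $\ell_\lambda(Bs)=\ell_\lambda(B)-1$ when $B\prec Bs$ (Section \ref{subsec-lengthfunc}) together with the $[1]$-shift in part (3) of Proposition \ref{prop-propwc}, one checks that the ``top'' standard subquotient of $\vartheta_{s_n}\cdots\vartheta_{s_1}\ul{\CK_\lambda}$ is $\SV(A)[\ell_\lambda(A)]$, occurring with multiplicity one. Since this subquotient occurs once, there is a unique indecomposable summand $\SB(A)$ admitting a surjection onto $\SV(A)[\ell_\lambda(A)]$; projectivity of $\SB(A)$ follows since summands of projectives are projective, and indecomposability is by construction.

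For the uniqueness statement in part (1), the plan is a standard Fitting/Krull--Schmidt style argument in the exact category $\bC$: if $\SB$ and $\SB'$ are two indecomposable projectives each admitting a surjection onto $\SV(A)[\ell_\lambda(A)]$, then using projectivity of $\SB$ against the surjection $\SB'\twoheadrightarrow\SV(A)[\ell_\lambda(A)]$ one lifts to a map $\SB\to\SB'$, and symmetrically $\SB'\to\SB$; one then shows the composite $\SB\to\SB$ is an isomorphism by checking it is so on the stalk $(\SB^{\preceq A})^{\pi(A)}$ (which is graded free of rank one over the local ring involved, because $\SB_{[A]}\cong S[\ell_\lambda(A)]\cdot(\text{something invertible})$ by the multiplicity-one computation) and invoking that an endomorphism of an indecomposable object of $\bC$ inducing an isomorphism on a ``generating stalk'' is itself an isomorphism — the finite support and flabbiness of objects in $\bC$ (so that sections are built from the $\SM_{[B]}$ by Remark \ref{rem-subquot}) make a Nakayama-type induction on $\{\preceq A\}$ go through.

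**The main obstacle.** The delicate part is not the formal structure but the precise control of the grading shifts and multiplicities through the iterated application of $\vartheta_s$: one must show that exactly one indecomposable summand of $\vartheta_{s_n}\cdots\vartheta_{s_1}\ul{\CK_\lambda}$ carries the standard subquotient $\SV(A)[\ell_\lambda(A)]$ and that this subquotient appears with multiplicity exactly one, which requires a careful inductive analysis of how $\supp_{\preceq}$ and the graded Verma multiplicities transform under the two-term formula in Proposition \ref{prop-propwc}(3), using Lemma \ref{lemma-propgenord1} to ensure no ``extra'' copies of $\SV(A)$ are created at intermediate stages and that the chain $A\prec A_\lambda^- s_1\cdots s_{n-1}\prec\cdots\prec A_\lambda^-$ from Lemma \ref{lemma-specalc} behaves as an honest ``reduced expression'' for multiplicity purposes. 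Once that bookkeeping is in place, uniqueness and the Verma flag for $\SB(A)$ follow formally.
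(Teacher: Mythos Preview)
Your proposal is correct and follows essentially the same strategy as the paper: start from $\ul{\CK_\lambda}$, apply $\vartheta_{s_n}\cdots\vartheta_{s_1}$ using Lemma~\ref{lemma-specalc}, and take an indecomposable direct summand. However, you are making the argument harder than necessary in two places.

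First, the paper proves uniqueness \emph{before} existence, by a pure Fitting argument: given two indecomposable projectives $\SB(A)$ and $\bP$ both surjecting onto $\SV(A)[\ell_\lambda(A)]$, lift to $f\colon\SB(A)\to\bP$ and $g\colon\bP\to\SB(A)$; the composite $g\circ f$ is not nilpotent (since composing with the epimorphism to $\SV(A)[\ell_\lambda(A)]$ still gives that epimorphism), so by Fitting in the category of sheaves it is an automorphism. No stalk computation, no Nakayama induction, and no rank-one information is needed.

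Second, and consequently, your ``main obstacle'' disappears: you do \emph{not} need multiplicity one. For existence it suffices to know that $A$ is a minimal element of $\supp_{\preceq}\SQ$ (this follows inductively from Proposition~\ref{prop-propwc}(3) and the chain condition in Lemma~\ref{lemma-specalc}), so that $\SQ_{[A]}=\SQ^{\preceq A}$ is graded free and contains a copy of $S[\ell_\lambda(A)]$ (the shift tracked exactly as you say, via $\ell_\lambda(Bs)=\ell_\lambda(B)-1$ and the $[1]$ in Proposition~\ref{prop-propwc}(3)). Any indecomposable summand carrying this surjection serves as $\SB(A)$; if there were several, the Fitting argument already shows they are all isomorphic. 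So the careful ``reduced expression'' bookkeeping you anticipate is unnecessary.
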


\begin{proof} 
We first show that there is at most one object (up to isomorphism) with the properties listed in (1).
Suppose that we have found an object $\SB(A)$ that has the stated properties, and suppose that $\bP$ is a projective object in $\bC$ that has an  epimorphism onto $\SV(A)[\ell_\lambda(A)]$. Then the projectivity of $\SB(A)$ and $\bP$ implies that we can find homomorphisms $f\colon \SB(A)\to\bP$ and $g\colon \bP\to\SB(A)$ such that the diagram 

\centerline{
\xymatrix{
\SB(A)\ar[dr]\ar[r]^f&\bP\ar[d]\ar[r]^g&\SB(A)\ar[dl]\\
&\SV(A)[\ell_\lambda(A)]&
}
}
\noindent 
commutes. The composition $g\circ f$ must be an automorphism, as it cannot be nilpotent and $\SB(A)$ is indecomposable (we use the Fitting decomposition in the category of sheaves on $\CA$). Hence $\SB(A)$ is a direct summand of $\bP$. If the latter is indecomposable, then $\SB(A)\cong\bP$, and hence we have proven the uniqueness statement. 
So we are left with showing that an object $\SB(A)$ in $\bC$ with the desired properties exists.

First suppose that $A=A_\lambda^-$. In this case, the object $\SB(A_\lambda^-):=\ul{\CK_\lambda}[\ell_\lambda(A_\lambda^-)]$ is contained in $\bB$, it is  indecomposable and admits an epimorphism onto $\SV(A_\lambda^-)[\ell_\lambda(A_\lambda^-)]$ by Proposition \ref{prop-ZKinC}, and it is 
projective in $\bC$ by Proposition \ref{prop-Oproj}.
Now let $A\in\Pi_\lambda$ be an arbitrary alcove. By Lemma \ref{lemma-specalc} we can then find $\lambda\in X$ and simple affine reflections $s_1$,\dots,$s_n$ in $\hCS$ with $A=A_{\lambda}^-s_1\cdots s_n$ and $A\prec A_\lambda^-s_1\cdots s_{n-1}\prec \cdots \prec A_{\lambda}^-$. As the wall crossing functors preserve projectivity,  the object $\SQ=\vartheta_{s_n} \cdots\vartheta_{s_1}\SB(A_{\lambda}^-)$ is projective in $\bC$. 
By construction, $A$ is a minimal element in $\supp_\preceq \SQ$. Hence there is an epimorphism $\SQ\to\SV(A)[\ell_\lambda(A)]$.  So we can take for $\SB(A)$ every indecomposable direct summand of $\SQ$ that maps surjectively onto $\SV(A))[\ell_\lambda(A)]$. This proves the existence part (1).  As the wall crossing functors preserve the category $\SB$, the object $\SQ$ is contained in $\SB$ and hence so is its direct summand $\SB(A)$. 
\end{proof}

\end{document}